
%

%
%
\documentclass[11pt]{amsart}
\usepackage[margin=1in]{geometry}
\usepackage{amssymb,amsmath,amsthm,thmtools}

\declaretheorem[name=Theorem,numberwithin=section]{theorem}
\declaretheorem[name=Proposition,sibling=theorem]{proposition}
\declaretheorem[name=Lemma,sibling=theorem]{lemma}
\declaretheorem[name=Corollary,sibling=theorem]{corollary}

\PassOptionsToPackage{numbers}{natbib}
\usepackage[numbers]{natbib}

\usepackage[most]{tcolorbox}





\usepackage[utf8x]{inputenc}

\usepackage{epsfig}
\usepackage{color}
\graphicspath{{fig/}} 
\usepackage{pgfplotstable}

\usepackage{graphicx}
\usepackage{constants}
\usepackage{wrapfig}
\usepackage{lscape}
\usepackage{rotating}
\usepackage{epstopdf}

\usepackage{amsmath}
\usepackage{amsthm}

\usepackage{hyperref}
\usepackage{cleveref}
\hypersetup{breaklinks=true,
    linkcolor=blue,
    citecolor=blue,
    colorlinks=true,
pdfborder={0 0 0}}

\usepackage{thmtools}
\usepackage{thm-restate}

\usepackage{latexsym}
\usepackage{graphicx}
\usepackage{amsfonts}
\usepackage{amssymb}
\usepackage{mathrsfs}
\usepackage[lofdepth,lotdepth]{subfig}
\usepackage{multirow}
\usepackage{booktabs}
\usepackage{lscape}

\usepackage{thmtools}
\usepackage{thm-restate}

\setcounter{secnumdepth}{3}
\numberwithin{equation}{section}
\numberwithin{equation}{section}






\def\argmin{\mathop{\rm arg\, min}}

\newcommand\onArxivOrInJournal[2]{#2}

\sloppy \hyphenpenalty=10000
\hfuzz=20truept

\newcommand{\bel}{\begin{eqnarray}\label}
\newcommand{\eel}{\end{eqnarray}}
\newcommand{\bes}{\begin{eqnarray*}}
\newcommand{\ees}{\end{eqnarray*}}
\newcommand{\bei}{\begin{itemize}}
\newcommand{\eei}{\end{itemize}}
\newcommand{\beiftnt}{\begin{itemize}\footnotesize}

\def\constants{\gamma,\mu,\mu_g,\varphi,a_*}

\def\argmin{\mathop{\rm arg\, min}}
\def\real{{\mathbb{R}}}
\def\R{{\real}}

\def\E{{\mathbb{E}}}

\def\P{{\mathbb{P}}}

\def\complex{\mathop{{\rm I}\kern-.58em\hbox{\rm C}}\nolimits}

\def\diag{\hbox{\rm diag}}

\def\Rem{\hbox{\rm Rem}}

\DeclareMathOperator{\trace}{Tr}


\def\mathbold{\boldsymbol} 


\def\ba{\mathbold{a}}

\def\bA{\mathbold{A}}
\def\hbA{{\widehat{\bA}}}\def\tbA{{\widetilde{\bA}}}

\def\bb{\mathbold{b}}
\def\hbb{{\widehat{\bb}}}\def\tbb{{\widetilde{\bb}}}
\def\bB{\mathbold{B}}

\def\bc{\mathbold{c}}

\def\bD{\mathbold{D}}

\def\bff{\mathbold{f}}
\def\tbf{{\widetilde{\bff}}}
\def\bF{\mathbold{F}}

\def\bG{\mathbold{G}}

\def\bh{\mathbold{h}}
\def\tbh{{\widetilde{\bh}}}
\def\bH{{\mathbold{H}}}

\def\bI{\mathbold{I}}

\def\bM{\mathbold{M}}

\def\bP{\mathbold{P}}

\def\bQ{\mathbold{Q}}\def\hbQ{{\widehat{\bQ}}}

\def\br{\mathbold{r}}
\def\tbr{{\widetilde{\br}}}
\def\bR{\mathbold{R}}

\def\be{\mathbold{e}}

\def\bu{\mathbold{u}}

\def\bU{\mathbold{U}}

\def\bv{\mathbold{v}}

\def\bV{\mathbold{V}}

\def\bw{\mathbold{w}}

\def\bW{{\mathbold{W}}}

\def\bx{\mathbold{x}}

\def\bX{\mathbold{X}}\def\tbX{{\widetilde{\bX}}}\def\bXbar{{\overline \bX}}

\def\by{\mathbold{y}}
\def\tby{{\widetilde{\by}}}\def\bybar{{\overline \by}}

\def\bz{\mathbold{z}}
\def\tbz{{\widetilde{\bz}}}
\def\bZ{\mathbold{Z}}

\def\balpha{\mathbold{\alpha}}

\def\hbalpha{{\widehat{\balpha}}}

\def\bbeta{\mathbold{\beta}}\def\hbeta{\widehat{\beta}}

\def\hbbeta{{{\widehat{\bbeta}}}}
\def\tbbeta{\widetilde{\bbeta}}
\def\bbetabar{{\overline\bbeta}}

\def\ep{\varepsilon}\def\eps{\epsilon}
\def\bep{ {\mathbold{\ep} }}

\def\tbep{{\widetilde{\bep}}}

\def\bfeta{\mathbold{\eta}}

\def\btheta{\mathbold{\theta}}\def\htheta{\widehat{\theta}}

\def\hbtheta{{\widehat{\btheta}}}\def\tbtheta{{\widetilde{\btheta}}}

\def\bmu{\mathbold{\mu}}

\def\brho{\mathbold{\rho}}

\def\brhobar{{\overline \brho}}

\def\bSigma{\mathbold{\Sigma}}

\def\bSigmabar{{\overline\bSigma}}

\def\bpsi{\mathbold{\psi}}\def\hpsi{\widehat{\psi}}

\def\hbpsi{{\widehat{\bpsi}}}\def\tbpsi{{\widetilde{\bpsi}}}


\declaretheorem[name=Assumption,numberwithin=section]{assumption}

\crefname{proposition}{proposition}{propositions}
\crefname{corollary}{corollary}{corollaries}
\crefname{example}{exapmle}{examples}
\crefname{remark}{remark}{remarks}
\crefname{lemma}{lemma}{lemmas}
\crefname{assumption}{assumption}{assumptions}
\crefname{table}{table}{tables}

\def\argmin{\mathop{\rm arg\, min}}

\def\SURE{\widehat{\rm{\scriptstyle SURE}}}

\def\defas{\stackrel{\text{\tiny def}}{=}}

\usepackage{enumitem}
\DeclareMathOperator{\dv}{div}

\def\df{{\hat{\mathsf{df}}}{}}
\def\tdf{{\tilde{\mathsf{df}}}{}}

\def\RHS{{{\mathrm{RHS}}}{}}

\usepackage{soul}

\usepackage[textsize=scriptsize,textwidth=1.7in]{todonotes}
\usepackage{marginnote}

\newcommand\pb[1]{{ #1 }}

\usepackage[color]{changebar} 
\cbcolor{magenta}
\setlength\changebarsep{5pt}






\title{Out-of-sample error estimation for M-estimators with convex penalty}
\author{Pierre C Bellec}
\begin{document}
\begin{abstract}
{
A generic out-of-sample error estimate is proposed for $M$-estimators
regularized with a convex penalty in high-dimensional linear regression where
$(\bX,\by)$ is observed and the dimension $p$ and sample size $n$ are of the same
order.  The out-of-sample error estimate enjoys a relative error of order
$n^{-1/2}$ in a linear model with Gaussian covariates and independent noise,
either non-asymptotically when $p/n\le \gamma$ or asymptotically in the
high-dimensional asymptotic regime $p/n\to\gamma'\in(0,\infty)$.  General
differentiable loss functions $\rho$ are allowed provided that the derivative
of the loss is 1-Lipschitz; this includes the least-squares loss as well as
robust losses such as the Huber loss and its smoothed versions.  The validity
of the out-of-sample error estimate holds either under a strong convexity
assumption, or for the L1-penalized Huber M-estimator and the Lasso under a
sparsity assumption and a bound on the number of contaminated observations.

For the square loss and in the absence of corruption in the response, the
results additionally yield $n^{-1/2}$-consistent estimates of the noise
variance and of the generalization error. This generalizes, to arbitrary
convex penalty and arbitrary covariance, estimates that were previously known
for the Lasso.
}
{
    \onArxivOrInJournal{}{\\\emph{Keywords: }}
    M-estimators,
    out-of-sample error estimation,
    parameter tuning,
    regularization,
    Huber loss,
    robustness.
}
\end{abstract}
\onArxivOrInJournal{}{\maketitle}


\section{Introduction}

Consider a linear model 
\begin{equation}
    \label{LM}
    \by=\bX\bbeta+\bep
\end{equation}
where $\bX\in\R^{n\times p}$ has iid $N({\mathbf 0},\bSigma)$ rows and $\bep\in\R^n$
is a noise vector independent of $\bX$.
The entries of $\bep$ may be heavy-tailed, for instance
with infinite second moment,
or follow Huber's contamination model
with $\eps_i$ iid with cumulative distribution function (cdf)
$F(u) = (1-q)\P(N(0,\sigma^2)\le u) + q G(u)$
where $q\in[0,1]$ is the proportion of corrupted entries
and $G$ is an arbitrary cdf chosen by an adversary.
Since the seminal work of Huber in \cite{huber1964robust},
a popular means to handle heavily-tails
or corruption of certain entries of $\bep$
is based
on robust loss functions $\rho:\R \to [0,+\infty)$ to construct $M$-estimators $\hbbeta$
by minimization of optimization problems of the form
$
    \hbbeta \in \argmin_{\bb\in\R^p}
    \frac 1 n 
    \sum_{i=1}^n \rho(y_i - \bx_i^\top\bb)
$
where $(\bx_i)_{i=1,...,n}$ are the rows of $\bX$.
Robustness against corruption of the above estimator typically
requires the convex loss $\rho$ to grow linearly at $\pm \infty$
and a well-studied example is the Huber loss $\rho_H(u)=\int_0^{|u|}\min(t,1)dt$.

As we are interested in the high-dimensional regime where $p$
is potentially larger than $n$, we also allow for convex penalty functions
to leverage structure
in the signal $\bbeta$ and fight the curse of dimensionality.
The central object of the present paper is thus
a penalized robust $M$-estimator of the form
\begin{equation}
    \label{M-estimator-rho}
    \hbbeta(\by,\bX) \in \argmin_{\bb\in\R^p}
    \Big(
        \frac 1 n 
        \sum_{i=1}^n \rho(y_i - \bx_i^\top\bb)
        + g(\bb)
    \Big)
\end{equation}
where $\rho:\R \to\R$ is a convex differentiable loss function,
and $g:\R^p\to\R$ is a convex penalty.
We may write simply $\hbbeta$ for $\hbbeta(\by,\bX)$
if the context is clear.

The main contribution of the present paper is the introduction
of a generic out-of-sample error estimate for penalized $M$-estimators
of the form \eqref{M-estimator-rho}.
Here, the out-of-sample error refers to the random quantity
\begin{equation}
\|\bSigma^{\frac12}(\hbbeta-\bbeta)\|^2
= \E[((\hbbeta-\bbeta)^\top\bx_{new} )^2 ~ | ~ (\bX,\by)]
\label{MSE}
\end{equation}
where $\bx_{new}$ is independent of the data $(\bX,\by)$ with the
same distribution as any row of $\bX$.
Our goal is to develop such out-of-sample error estimate
for $\hbbeta$ in \eqref{M-estimator-rho}
with little or no assumption on the robust loss $\rho$
and the convex penalty $g$, in order to allow broad choices by practitioners
for $(\rho,g)$.

We consider the high-dimensional regime where $p$ and
$n$ are of the same order. The results of the present paper
are non-asymptotic and assume that $p/n \le \gamma\in(0,\infty)$
for some fixed constant $\gamma$ independent of $n,p$.
Although non-asymptotic, these results are applicable
in the regime where $n$ and $p$ diverge such that 
\begin{equation}
    \label{eq:regime-gamma-prime}
    p/n\to \gamma'
    ,
\end{equation}
simply by considering a constant $\gamma>\gamma'$.
The analysis of the performance of convex estimators
in the asymptotic regime \eqref{eq:regime-gamma-prime}
has received considerable attention in the last few years
in the statistics, machine learning, electrical engineering
and statistical physics communities.
Most results available in the $p/n\to\gamma'$ literature regarding
$M$-estimators are
either based on
Approximate Message Passing (AMP) \cite{bayati2012lasso,donoho2016high,bradic2015robustness,wang2017bridge,celentano2019fundamental,gerbelot2020asymptotic}
following the pioneering work \cite{donoho2009message}
in compressed sensing problems,
on leave-one-out methods \cite{el_karoui2013robust,bean2013optimal,karoui2013asymptotic,el_karoui2018impact},
or on the Gordon's Gaussian min-max theorem (GMT) 
\cite{stojnic2013framework,
thrampoulidis2015lasso,
thrampoulidis2018precise,miolane2018distribution}.
The goal of these techniques is to
summarize the performance and behavior of the $M$-estimator $\hbbeta$
by a system of nonlinear equations with up to six unknown scalars
(e.g., the system of \cite{el_karoui2013robust} with unknowns
$(r,c)$ for unregularized robust $M$-estimators,
the system with unknowns $(\tau,\beta)$ of \cite[Proposition 3.1]{miolane2018distribution}
for the Lasso which dates back to \cite{bayati2012lasso},
the system with unknowns $(\tau,\lambda)$ of \cite[Section 4]{celentano2019fundamental} for permutation-invariant penalties,
or recently the system with six unknowns
$(\alpha,\sigma,\gamma,\theta,\tau,r)$
of
\cite{salehi2019impact} in regularized logistic regression).
Solving these nonlinear equations provide information about the risk
$\|\hbbeta-\bbeta\|$,
and in certain cases asymptotic normality results for the coefficients of $\hbbeta$
after a bias correction, see
\cite[e.g., Proposition B.3(iii)]{celentano2019fundamental}.
These systems of nonlinear equations depend the true coefficient vector
$\bbeta$ and the knowledge of $\bbeta$ or its limiting emprical
distribution is required to compute
the solutions. 
For Ridge regression, results can be obtained
using random matrix theory tools such as the Stieljes transform
and limiting spectral distributions of certain
random matrices
\cite{dicker2016ridge,dobriban2018high}.
Our results are of a different nature, as they do not involve solving systems
of nonlinear equations or their solutions.
Instead, our results relate fully data-driven quantities
to the out-of-sample error \eqref{MSE}. 

Additionally, most of the aforementioned works 
require isotropic design ($\bSigma=\bI_p$),
although there are notable exceptions for specific examples:
isotropy can be relaxed for Ridge regularization \cite{dobriban2018high},
in unregularized logistic regression \cite{zhao2020asymptotic}
and for the Lasso in sparse linear regression \cite{celentano2020lasso}.
The techniques developed in the present paper
do not rely on isotropy: general $\bSigma\ne\bI_p$ is allowed
without additional complexity.

We assume throughout the paper that $\rho$ is differentiable
and denote by $\psi:\R\to\R$ the derivative of $\rho$.
We also assume that $\psi$ is absolutely continuous and denote by
$\psi'$ its derivative. The functions $\psi,\psi'$
act componentwise when applied to vectors,
for instance $\psi(\by-\bX\hbbeta)=(\psi(y_i-\bx_i^\top\hbbeta))_{i=1,...,n}$.
Throughout, $\|\cdot\|$ is the Euclidean norm.

\paragraph{Contributions.}

The paper introduces a novel data-driven
        estimate of the out-of-sample 
        error \eqref{MSE}.
        The estimate depends on
        the data only through
        $\hbpsi \defas \psi(\by-\bX\hbbeta)$, the vector
        $\bSigma^{-\frac12}\bX^\top\hbpsi$
        and the derivatives of
        $\by\mapsto \hbbeta$
        and $\by \mapsto \psi(\by-\bX\hbbeta)$
        for fixed $\bX$.
        For certain choices of $(\rho,g)$ these derivatives
        have closed forms, for instance in the case of
        the $\ell_1$-penalized Huber $M$-estimator
        when $\rho$ is the Huber loss,
        the estimator $\hat R$ of the out-of-sample error
        \eqref{MSE} is
        $$\hat R
        = \big(|\hat I| - |\hat S|)^{-2}
        \big\{
            \|\psi(\by-\bX\hbbeta)\|^2
            \big(2|\hat S| -p\big)
            + \|\bSigma^{-\frac12}\bX^\top
               \psi(\by-\bX\hbbeta)
            \|^2
        \big\}
        $$
        where $\hat S =\{j\in[p]:\hbeta_j\ne 0\}$ is the active set
        and $\hat I = \{i\in [n]: \psi'(y_i-\bx_i^\top\hbbeta) > 0\}$
        is the set of inliers.
        Explicit formulae are also available for
        the Elastic-Net penalty $g(\bb)=\lambda\|\bb\|_1 + \mu\|\bb\|_2$
        for any loss $\rho$.
        For general choices of $(\rho,g)$, the derivatives
        can be approximated by a Monte Carlo scheme
        (cf. \Cref{sec:divergence}).

        The estimate is valid
        under mild assumptions, namely:
        $\psi$ is $1$-Lipschitz,
        $p/n\le \gamma$ for some constant $\gamma$ independent of $n,p$
        and that either (i) the penalty function
        $g$ is $\mu$-strongly convex,
        (ii) the loss $\rho$ is strongly convex and $\gamma<1$,
        (iii.a) $\smash{\hbbeta}$ is the Lasso with square loss
        with a sparse $\bbeta$,
        or (iii.b) $\smash{\hbbeta}$ is the $\ell_1$ penalized Huber $M$-estimator
        together with an additional assumption on the fraction of corrupted
        observations and sparsity of $\bbeta$.

        The proof arguments for the main theorem in \Cref{sec:main} below
        provide new avenues to study
        $M$-estimators in the regime $p/n\to\gamma'$.
        The results rely on novel moment
        inequalities (cf. \Cref{big_cor} below) that let us directly bound
        the difference between 
        quantities of interest (e.g., the out of sample error)
        and their estimates.
        These new techniques do not overlap with
        arguments typically used to analyse $M$-estimators when
        $p/n\to\gamma'$ such as
        Approximate Message Passing (AMP)
\cite{bayati2012lasso,donoho2016high,bradic2015robustness,wang2017bridge,celentano2019fundamental,gerbelot2020asymptotic},
        or the Gordon's Gaussian Min-Max Theorem (GMT)
\cite{stojnic2013framework,
thrampoulidis2015lasso,
thrampoulidis2018precise,miolane2018distribution}.

    In the special case of the square loss, our estimate of the out-of-sample error
        coincides with previous estimates
        for the Ordinary Least-Squares \cite{leeb2008evaluation},
        for the Lasso
        \cite{bayati2012lasso,bayati2013estimating,miolane2018distribution}
        and for $\hbbeta=\mathbf{0}$ \cite{dicker2014variance}.
        Our results can be seen
        as a broad generalization
        of these estimates to
        (a) arbitrary covariance,
        (b) general loss function, including robust losses, and
        (c) general convex penalty.
        For the square loss, our results also yield
        generic estimates for the noise level
        and the generalization error
        $\E[(\bx_{new}^\top\hbbeta - Y_{new})^2 | (\bX,\by)]$.
        Most comparable to our results are the estimates of 
        out-of-sample errors and other out-of-sample metrics studied
        in \cite{rad2018scalable,rad2020error,wang2018approximate}. However,
        the accuracy of the estimates in these works is only 
        guaranteed for smooth penalty functions \cite[Theorem 3, Assumption 6]{rad2018scalable},
        which excludes the $\ell_1$-penalty, the Elastic-Net
        and the nuclear norm as regularizers.

\paragraph{Organization.}
\Cref{sec:main} is devoted to the out-of-sample estimate $\hat R$,
the proof of its consistency, and
explicit formulae 
for specific loss and penalty function commonly used in high-dimensional and
robust statistics.
\Cref{sec:square_loss} is devoted to the square loss for which
additional results are available regarding estimation of the noise level
and the generalization error.
\Cref{sec:derivatives-6,sec:gradient-identities} derive several Lipschitz properties
to ensure existence of the derivatives as well as useful
gradient identities for M-estimators.
\Cref{sec:6-identitities-gaussian,sec:proof_chi2} provide the main
probabilistic results used in the paper.

\paragraph{Notation}

The abbreviation a.s. means almost surely.
Let $\bI_d$ be the identity matrix of size $d\times d$.
For any $p\ge 1$, let $[p]$ be the set $\{1,...,p\}$.
Let $\|\cdot\|$ be the Euclidean norm
and $\|\cdot\|_q$ the $\ell_q$ norm of vectors.
Let $\|\cdot\|_{op}$ be the operator norm (largest singular value),
$\|\cdot\|_F$ the Frobenius norm.
If $\bM$ is positive semi-definite we also use the notation
$\phi_{\max}(\bM)$ and $\phi_{\min}(\bM)$ for the largest and smallest
eigenvalue.
For any event $\Omega$, denote by $I\{\Omega\}$ its indicator function. 
For $a\in\R$, $a_+=\max(0,a)$.
Throughout the paper, we use $\C, \C,...$ to denote positive absolute constants,
$\C(\gamma),\C(\gamma),...$ to denote constants that depend on $\gamma$ only
and for instance $\C(\constants)$ to denote a constant that depend on
$\{\constants\}$ only. 

Canonical basis vectors are denoted by $(\be_i)_{i=1,...,n}$
or $(\be_l)_{l=1,...,n}$
for the canonical basis in $\R^n$, and
by $(\be_j)_{j=1,...,p}$ or $(\be_k)_{k=1,...,p}$
for the canonical basis vectors in $\R^p$.
Indices $i$ and $l$ are used to loop or sum over $[n]=\{1,...,n\}$ only,
while 
indices $j$ and $k$ are used to loop or sum over $[p]=\{1,...,p\}$ only.
This lets us use the notation $\be_i,\be_l, \be_j, \be_k$ for canonical
basis vectors in $\R^n$ or $\R^p$ as the index reveals without
ambiguity whether the canonical basis vector lies in $\R^n$ or $\R^p$.

We will refer to Frechet differentiability for the usual notion of differentiability, i.e., $f(\bx + \bh) = f(\bx) + \nabla f(\bx)^\top \bh + o(\|\bh\|)$.
This is stronger
than, e.g., Gateaux differentiability for which linearity is not required.

\section{Main result}
\label{sec:main}

Throughout, $\hbbeta$ is the estimator \eqref{M-estimator-rho}
with loss $\rho:\R\to\R$ and penalty $g:\R^p\to \R$.
The goal of this section is to develop a generic
estimator $\hat R$ for the Out-of-sample error $\|\bSigma^{1/2}(\hbbeta-\bbeta)\|^2$.

\subsection{Assumptions}

\begin{assumption}[Loss function]
    \label{assum:rho}
    The loss $\rho$ is convex and differentiable, and
    $\psi=\rho'$ is $1$-Lipschitz with derivative denoted
    by $\psi'$ where the derivative exists.
\end{assumption}
This allows for a large class of non-robust and robust loss functions,
including the square loss $\rho(u)=u^2/2$,
the Huber loss
$\rho_H(u) = \int_0^{|u|}\min(t,1)dt$ as well as
smoothed versions of $\rho_H$.
Since $\psi$ is $1$-Lipschitz, $\psi'$ exist almost everywhere
thanks to Rademacher's theorem.
Loss functions typically require a scaling parameter
that depends on the noise level to obtain
satisfactory risk bounds, see
\cite{dalalyan2019outlier} and the references therein.
For instance we consider in the main result below the loss 
\begin{equation}
    \label{eq:loss-scaling}
\rho(u)=\Lambda_*^2\rho_H\big(\Lambda_*^{-1}u\big)
\end{equation}
where $\rho_H$ is the Huber loss
and $\Lambda_*>0$ is a scaling parameter.
Since for the Huber loss $\psi_H =\rho_H'$ is 1-Lipschitz,
$\psi(u)=\rho'(u) = \Lambda_*\psi_H(\Lambda_*^{-1} u )$ is
also 1-Lipschitz.
In short, scaling a given loss with a tuning parameter
$\Lambda_*$ as in \eqref{eq:loss-scaling} does not change
the Lipschitz constant of the first derivative of $\rho$,
and the above assumption does not prevent using
a scaling parameter $\Lambda_*$.
Additionally, if the desired loss is such that $\psi$ is $L$-Lipschitz for some constant
$L\ne 1$, one may replace $(\rho,g)$ by $(L^{-1}\rho,L^{-1} g)$
to obtain a 1-Lipschitz loss without changing the value of $\hbbeta$
in \eqref{M-estimator-rho}.

\begin{assumption}[Probability distribution]
    \label{assum:Sigma}
    The rows of $\bX$ are iid $N(\mathbf{0}, \bSigma)$
    with $\bSigma$ invertible,
    $\bep$ is independent of $\bX$,
    and $(\bX,\by)$ has continuous distribution.
\end{assumption}

The Gaussian assumption is admittedly the strongest assumption
required in this work. However arbitrary covariance $\bSigma$ is allowed,
while a large body of related literature requires $\bSigma$ proportional
to identity, see for instance \cite{bayati2013estimating,bradic2015robustness,celentano2019fundamental,salehi2019impact}. Allowing arbitrary $\bSigma$ 
together with general penalty functions is made possible
by developing new techniques that are of a different nature
than this previous literature;
see the proof in \Cref{sec:proof-sketch,sec:proof-main-results}.
We require that $(\bX,\by)$ has continuous distribution
in order to ensure that derivatives of certain Lipschitz functions
of $(\by,\bX)$
exist with probability one, again by Rademacher's theorem.
If $(\bX,\by)$ does not have continuous distribution, one can
always replace $\by$ with $\tby = \by + a \tbz$
where $a$ is very small and $\tbz\sim N(\mathbf{0},\bI_n)$ is sampled
independently of $(\bep,\bX)$. Hence the continuous distribution
assumption is a mild technicality.

\begin{assumption}[Penalty]
    \label{assum:g}
    Assume either one of the following:
    \begin{enumerate}
        \item $p/n\le \gamma\in (0,+\infty)$
            and the penalty $g$ is $\mu>0$ strongly convex with respect to $\bSigma$,
            in the sense
            that any $\bb,\bb'\in\R^p$,
            $\mathsf{d}\in\partial g(\bb)$ and
            $\mathsf{d}'\in\partial g(\bb')$
            satisfy
            $(\mathsf d - \mathsf d')^\top(\bb - \bb') \ge \mu \|\bSigma^{\frac12}(\bb-\bb')\|^2$.
        \item The penalty $g$ is only assumed convex, $p/n\le\gamma < 1$
            and $\rho$ is $\mu_\rho>0$ strongly convex
            in the sense that $(u-s)(\psi(u)-\psi(s))\ge\mu_\rho(u-s)^2$ for all $u,s\in\R$.
        \item 
            For any constants $\varphi\ge 1, \gamma>1,a_*>0$
            independent of $n,p$, assume
    $\diag(\bSigma)=\bI_p$,
    $p/n\le\gamma\in (0,\infty)$ and
    $\phi_{\max}(\bSigma)/\phi_{\min}(\bSigma)\le\varphi$.
    The penalty is $g(\bb)=\pb{n^{-1/2}}\lambda\|\bb\|_1$
    and either
    \begin{enumerate}
\item The loss is the squared loss $\rho(u)=u^2/2$,
    the noise is normal $\bep\sim N(\mathbf{0},\sigma^2\bI_n)$
    and $\|\bbeta\|_0 \le s_* n$
    where $s_*>0$ is a small enough constant
    depending on $(\varphi,\gamma)$ only,
    and the tuning parameter $\lambda$ satisfies
    $\lambda\ge \sigma \lambda_*$
    for some large enough constant $\lambda_*>0$ depending only
    on $(\varphi,\gamma)$.
\item
    The loss is
    $\rho(u) = \lambda_H^2  \rho_H(\lambda_H^{-1}u)$ for $\rho_H$ the Huber loss $\rho_H(u)=\int_0^{|u|}\min(t,1)dt$
    and some tuning parameter $\lambda_H$.
    Furthermore 
    $s_*>0$ is a small enough constant
    depending on $\{\varphi,\gamma,a_*\}$ only
    such that there exists a set $O\subset [n]$
    with $|O|+\|\bbeta\|_0\le s_*n$ such that the $n-|O|$ noise components
    $(\eps_i)_{i\in[n]\setminus O}$ are iid $N(0,\sigma^2)$.
    The tuning parameters are assumed to satisfy
    $\lambda/\lambda_H=a_*$ and 
    $\lambda\ge \sigma \lambda_*$
    for some large enough constant $\lambda_*>0$ depending only
    on $(\varphi,\gamma,a_*)$.
    \end{enumerate}
    \end{enumerate}
    Here and throughout the paper $\constants\ge0$
    are constants independent of $n,p$.
\end{assumption}
Strong convexity on the penalty (i.e., (i) above)
or strong convexity of the loss (i.e., (ii) above) can be found in numerous
other works on regularized $M$-estimators 
\cite[among others]{donoho2016high,celentano2019fundamental,xu2019consistent}.
In our setting, strong convexity simplifies the analysis
as it grants existence of the derivatives
of $\hbbeta$ with respect to $(\by,\bX)$ ``for free''
\pb{thanks to the Lipschitz conditions obtained in
\Cref{sec:lipschitz-properties}}.
Assumption (iii.a) above relaxes strong convexity on the penalty
and (iii.b) relaxes strong convexity on both the loss and penalty,
by instead assuming a specific choice for $(\rho,g)$.
Assumption~(iii.a) focuses on the Lasso under a sparsity assumption,
while Assumption~(iii.b) focuses
the Huber loss and $\ell_1$ penalty
together with an upper bound on the sparsity
of $\bbeta$ and the number of corrupted components
of $\bep$. In Assumption~(iii.b),
the uncorrupted observations are indexed
in $[n]\setminus O$ and
the corrupted ones are those indexed in $O$.
\Cref{assum:g}(iii.a) and (iii.b) provides non-trivial examples
for which our result holds
without strong convexity on either the loss or the penalty.
Under \Cref{assum:g}(iii.b), the result holds
provided that the corruption is not too strong
and the penalty $g$ (here the $\ell_1$ norm)
is well suited to the structure of $\bbeta$ (here, the sparsity).

The generality in \Cref{assum:g}(i) and (ii) is obtained
by leveraging the strong convexity of either the loss or the penalty.
\Cref{assum:g}(iii.a) and (iii.b) are more specific and
show that without strong convexity,
our results still hold in these specific cases.
The proof under \Cref{assum:g}(iii.a) and (iii.b) leverages
the special form of the loss and penalty and requires a case-by-case
analysis for these choices of $(\rho,g)$.
Although we expect our main results to hold without strong convexity
for other penalty functions than the $\ell_1$ norm (e.g., the group Lasso norm
or indicator functions of convex sets by developing again
case-by-case analysis), a global strategy
to characterize the pairs $(\rho,g)$ for which the results hold
is currently out of reach.

\subsection{Jacobians of \texorpdfstring{$\hbpsi,\hbbeta$}{psi,beta-hat} at the observed data}
Throughout the paper, we view the functions
\begin{equation}
    \label{hbpsi-hbbeta-y-X}
    \begin{aligned}
    \hbbeta:
        \R^{n}\times \R^{n\times p} &\to \R^p,
        \qquad
                                    &(\by,\bX)&\mapsto \hbbeta(\by,\bX) \text{ in \eqref{M-estimator-rho}},
\\
\hbpsi:
        \R^{n}\times \R^{n\times p} &\to \R^n,
        &(\by,\bX)&\mapsto \hbpsi(\by,\bX) = \psi(\by-\bX\hbbeta(\by,\bX))
    \end{aligned}
\end{equation}
as functions of $(\by,\bX)$, though we may drop the dependence in $(\by,\bX)$
and write simply $\hbbeta$ or $\hbpsi$ if the context is clear. 
Here, recall that $\psi$ acts componentwise  on the residuals $\by-\bX\hbbeta$,
so that $\psi(\by-\bX\hbbeta)\in\R^n$ has components
$\psi(y_i - \bx_i^\top\hbbeta)_{i=1,...,n}$.
The hat in the functions $\smash{\hbbeta}$ and $\smash{\hbpsi}$ above emphasize
that they are data-driven quantities, and since they are functions
of $(\by,\bX)$, the directional derivatives of $\hbbeta$ and $\hbpsi$
at the observed data $(\by,\bX)$ are also observable quantities,
for instance
$$\frac{\partial}{\partial y_i} \hbbeta(\by,\bX)
=
\frac{d}{dt} \hbbeta(\by + t\be_i,\bX) \Big|_{t=0}.
$$
Provided that they exist, the derivatives can be computed approximately
by finite-difference or other numerical methods;
a Monte Carlo scheme to compute the required derivatives
is given in \Cref{sec:divergence}.
We thus assume that the Jacobians
\begin{equation}
    \label{jacobians-hbpsi-hbbeta}
    \pb{\bV\defas}
\frac{\partial \hbpsi}{\partial \by}(\by,\bX) = 
\Big(
    \frac{\partial \hpsi_i}{\partial y_l} (\by,\bX)
\Big)_{(i,l)\in[n]\times [n]},
\quad
\frac{\partial \hbbeta}{\partial \by}(\by,\bX) = 
\Big(
    \frac{\partial \hbeta_j}{\partial y_l} (\by,\bX)
\Big)_{(j,l)\in[p]\times [n]}
\end{equation}
are available. Above, $\bV$ is a matrix in $\R^{n\times n}$ with columns
$\frac{\partial \hbpsi}{\partial y_l}(\by,\bX), l=1,...,n$.
\Cref{sec:lipschitz-properties} will make clear
that the existence of such partial derivatives is granted,
under our assumptions, for almost every $(\by,\bX)\in\R^n\times \R^{n\times p}$
by Rademacher's theorem
(cf. Proposition~\ref{prop:constant-bv-e_j} below).
For brevity and if it is clear from context,
we will drop the dependence in $(\by,\bX)$ from the notation,
so that the above Jacobians
$\bV = (\partial/\partial \by) \hbpsi \in\R^{n\times n}$,
$({\partial }/{\partial \by}) \hbbeta \in\R^{p\times n}$
as well as their entries
$(\partial  / \partial y_l ) \hpsi_i$ and
$(\partial / \partial y_l) \hbeta_j$ are implicitly taken
at the currently observed data $(\by,\bX)$.
Next, define
\begin{equation}
    \label{eq:def-df}
    \df = \trace[(\partial/\partial\by) \bX\hbbeta(\by,\bX)],
\end{equation}
let $\hat I = \{i\in[n]: \psi'(y_i-\bx_i^\top\hbbeta)>0\}$
be the set of detected inliers and $\hat O = [n]\setminus \hat I$
the set of detected outliers.
Finally, throughout the paper we denote by
$\pb{\bpsi'}\in\R^n$ the vector with $i$-th component $\psi'(y_i-\bx_i^\top\hbbeta)$,
$\diag(\bpsi')\in\R^{n\times n}$ the diagonal matrix with the entries
of $\bpsi'$ as diagonal entries,
and
$\bh = \hbbeta - \bbeta \in\R^p$
the error vector so that the out-of-sample error
that we wish to estimate is $\|\bSigma^{\frac12}\bh\|^2$.

\subsection{Main result}
Equipped with the above notation \pb{for $\df$} and
the Jacobian
$\bV = (\partial/\partial\by)\hbpsi$
at the observed data $(\by,\bX)$,
we are ready to state the main result of the paper.

\begin{theorem}
    \label{thm:main_robust}
    \pb{
    Let $\hbbeta$ be the $M$-estimator \eqref{M-estimator-rho}.
    Define the estimate $\hat R$ and the remainder $\Rem$ by
    \begin{equation}
\label{eq:hat-R}
\hat R
\defas
\trace[\bV]^{-2}\bigl\{ \|\hbpsi\|^2
    \bigl(2\pb{\df}-p\bigr)
+ \|\bSigma^{-\frac12}\bX^\top\hbpsi\|^2\bigr\},
\qquad
    \Rem
    \defas
\frac{ 
\|\bSigma^{\frac12}\bh\|^2 -\hat R
}{
 \|\hbpsi\|^2/n + \|\bSigma^{\frac12}\bh\|^2
}
.
\end{equation}

\begin{enumerate}
\item
    If Assumptions~\ref{assum:Sigma}, \ref{assum:rho}
    and \ref{assum:g}(i) hold then
$\E|
(\frac 1 n \trace \bV)^{2}
\Rem
|
\le
\C(\mu,\gamma) n^{-1/2}$.

\item
    If Assumptions~\ref{assum:Sigma}, \ref{assum:rho}
    and \ref{assum:g}(ii) hold,
$\E|
\Rem|
\le
\C(\mu_\rho,\gamma) n^{-1/2}$
and
$\frac 1 n
    \trace[\bV]\ge \mu_\rho( 1-\gamma )$
    a.s.
\item
    If Assumptions~\ref{assum:Sigma}, \ref{assum:rho}
    and \ref{assum:g}(iii.a) or (iii.b) hold then
$\E\big|
I \{\Omega\}
\Rem
\big|
\le
\C(\varphi,\gamma,a_*) n^{-1/2}$
where $I \{\Omega\}$ is the indicator of an event $\Omega$ defined in the proof such that $\mathbb P(\Omega)\to 1$  as $n,p\to+\infty$ while $\{\varphi,\gamma,a_*\}$ remain fixed.
Furthermore, $\frac 1 n \trace[\bV]\ge 1-d_* >0$ in $\Omega$ for some constant $d_*$ depending on $\{\varphi,\gamma,a_*\}$ only.
\end{enumerate}
}
%
\end{theorem}
The proof is given in \Cref{sec:proof-main-results}.
Recall that the target of estimation is the 
out-of-sample error $\|\bSigma^{\frac12}\bh\|^2=\|\bSigma^{1/2}(\hbbeta-\bbeta)\|^2$.
In the regime of interest here with $p/n\to\gamma'$,
the risk $\|\bSigma^{\frac12}\bh\|^2$ is typically of the order of a constant, see
\cite{el_karoui2013robust,el_karoui2018impact,donoho2016high,bayati2012lasso,thrampoulidis2018precise,celentano2019fundamental} among others.
When $\|\hbpsi\|^2/n=\|\psi(\by-\bX\hbbeta)\|^2/n$ is also of order of a constant (e.g., with Huber loss $\rho_H$ for which $\sup_{t\in\R} |\psi(t)|=1$), 
\Cref{thm:main_robust} provides
$|\hat R - \|\bSigma^{1/2}\bh\|^2|= O_\P(n^{-\frac12})$
if the multiplicative factor
$(\frac 1 n \trace \bV)^{2}
=(\frac 1 n {\trace[({\partial / }{\partial \by})\hbpsi ]})^{2}$
is bounded away from 0 in the sense
that $n/\trace[\bV] = O_\P(1)$.
In particular, $n/\trace[\bV] = O_\P(1)$ holds by \Cref{thm:main_robust}
under \Cref{assum:g}(ii) and (iii).

The inequality in \Cref{thm:main_robust} is sharp for the 
Ordinary Least-Squares (OLS) with normal noise when $p/n\le \gamma <1$,
i.e.,  for $\rho(u)=u^2/2$ and $g(\bb)=0$.
Assuming $\bep\sim N(\mathbf{0},\sigma^2\bI_n)$,
in this case we have 
\begin{equation}
    \label{eq:ls-discussion}
    \df=p,
    \qquad
    \trace[(\partial/\partial\by)\hbpsi]=n-p,
    \qquad
    \bX^\top\hbpsi = 0,
    \qquad
    \hat R/\sigma^2 = (n-p)^{-2} p \chi^2_{n-p}
\end{equation}
where $\chi^2_{n-p} = \|\hbpsi\|^2/\sigma^2 = \|\by-\bX\smash{\hbbeta}\|^2/\sigma^2$
has chi-square distribution with $n-p$ degrees of freedom.
Furthermore $\by-\bX\smash{\hbbeta}$ is independent of $\smash{\hbbeta}$ and
$
(\hat R - \|\bSigma^{1/2}\bh\|^2)/\sigma^2
$
is the sum of two independent random variables, so that its standard deviation
is at least the standard deviation of $\hat R/\sigma^2$.
Since the standard deviation
of $\hat R/\sigma^2$ equals $(n-p)^{-2}p \sqrt{2 (n-p)}$
and is equivalent to $(1-\gamma')^{-1}\gamma'\sqrt{2(1-\gamma')} n^{-1/2}$
if $p/n\to\gamma'<1$,
this proves that $(\hat R - \|\bSigma^{1/2}\bh\|^2)$
incurs an unavoidable standard deviation of order $\sigma^2 n^{-1/2}$.
This argument valid for the OLS
shows that an error term of order $n^{-1/2}$ in the right hand side
of \Cref{thm:main_robust} is unavoidable at least for this specific 
example.
\Cref{thm:main_robust} is, in this sense, unimprovable.

The OLS is a simple example for which the inequality
$\hat R\approx \|\bSigma^{1/2}\bh\|^2$
also follows, for instance, using
the convergence of the \pb{spectral distribution} of
$\bSigma^{-1/2}\bX^\top\bX\bSigma^{-1/2}/n$
to the Marchenko-Pastur law. Similarly, the approximation
$\hat R\approx \|\bSigma^{1/2}\bh\|^2$ can be obtained
for Ridge regression, that is, $\rho(u)=u^2/2$ and $g(\bb)=\mu\|\bb\|^2$,
using again the limiting spectral distribution of
$\bSigma^{-1/2}\bX^\top\bX\bSigma^{-1/2}/n$.
Outside of these cases, the approximation
$\hat R\approx \|\bSigma^{1/2}\bh\|^2$
\pb{does not directly follow
from the spectral distribution of 
$\bSigma^{-1/2}\bX^\top\bX\bSigma^{-1/2}/n$.}
The present paper develops the theory to explain
the approximation $\hat R \approx \|\bSigma^{1/2}\bh\|^2$
using simple first and second moment identities 
described in \Cref{sec:proof-sketch} which contains
a proof sketch of \Cref{thm:main_robust}.

\begin{table}[]
    \begin{tabular}{@{}|l|l|p{5.7cm}|l|@{}}
\toprule
Loss & Penalty  & $\df$ & $\trace[\frac{\partial\hbpsi}{\partial\by)}]$  \\ \toprule
 $u^2/2$ (Square) & $\lambda\|\bb\|_1$ & $|\hat S|$  & $n-|\hat S|$ \\ \midrule
 $u^2/2$ (Square) & $\lambda\|\bb\|_1 + \mu\|\bb\|_2^2$ &
 $\trace[\bX_{\hat S}(\bX_{\hat S}^\top\bX_{\hat S}+n\mu\bI)^{-1}\bX^\top_{\hat S}]$
                  & $n-\df$  \\ \midrule
 $\rho_H$ (Huber loss) & $\lambda\|\bb\|_1$ & $|\hat S|$ & $|\hat I| - |\hat S|$  \\ \midrule
 $\rho_H$ (Huber loss) & $\lambda\|\bb\|_1 + \mu\|\bb\|_2^2$ &
 $\trace[\bX_{\hat S}(\bX_{\hat S}^\top\bD\bX_{\hat S}+n\mu\bI)^{-1}\bX^\top_{\hat S}\bD]$ \newline where $\bD=\diag(\bpsi')$
 & $|\hat I| - \df$  \\ \midrule
 Any & $\lambda\|\bb\|_1 + \mu\|\bb\|_2^2$ &
 $\trace[\eqref{eq:df-Elastic-Net}]$
     &
 $\trace[\eqref{eq:trace-V-Elastic-Net}]$
 \\
 \bottomrule
\end{tabular}
\caption[Explicit formulae for $\df$ and $\trace((\partial/\partial\mathbf{y})\mathbf{\hat\psi})$]{Explicit formulae for the factors $\df$ and
    $\trace[(\partial/\partial\by)\hbpsi]$ used
    in the out-of-sample estimate $\hat R$
    for commonly used penalty functions.
    Here $\hat I =\{i\in[n]: \psi'(y_i-\bx_i^\top\hbbeta)\}$ is the
    set of inliers, $\hat S=\{j\in [p]:\hat\beta_j \ne 0\}$ is the set
    of active variables, $\bX_{\hat S}$ the submatrix of $\bX$
    made of columns indexed in $\hat S$.
See Propositions~\ref{prop:specific-ENet} and~\ref{prop:Huber-Lasso} for more details.
}
\label{tab:my-table}
\end{table}

\subsection{On the range of the multiplicative factors in $\hat R$}
\Cref{thm:main_robust} involves the multiplicative
factors $\trace[(\partial/\partial\by)\hbpsi]$
and \pb{$\df$}.
The following result provides
the possible range for these quantities.

\begin{restatable}{proposition}{PropPsiPsd}
    \label{prop:jacobian-psd-htheta}
    Assume that $\rho$ is convex differentiable and that $\psi=\rho'$ is 1-Lipschitz.
    For every fixed $\bX\in\R^{n\times p}$ the following holds.
    \begin{itemize}
    \item 
    For almost every $\by$,
     the map $\by\mapsto \hbpsi = \psi(\by-\bX\hbbeta)$ is Frechet differentiable at $\by$, 
     and the Jacobian $\bV = (\partial/\partial\by)\hbpsi\in\R^{n\times n}$ is symmetric
    psd with operator norm at most one so that
    $\trace[\bV]=\trace[({\partial}/{\partial\by})\hbpsi] ~\in~ [0,n]$.
    \item
    If additionally \Cref{assum:g}(i) or (iii.b) holds
    then almost surely
    $
    \pb{\df}
    \le |\hat I|
    $
    where $\hat I = \{i\in[n]: \psi'(y_i -\bx_i^\top\hbbeta)>0\}$
    is the set of inliers.
    \end{itemize}
\end{restatable}

The proof of Proposition~\ref{prop:jacobian-psd-htheta}
is given in \Cref{sec:proof-jacobian-psd}.
For the square loss, $\df$ is no more than the sample
size $n$ since for any penalty, $\df=\trace[(\partial/\partial\by)\bX\hbbeta]$
is the divergence of a 1-Lipschitz function \cite[e.g.]{bellec2016bounds}.
The second point above states that this inequality is
replaced by $\df\le |\hat I|$ for general loss functions,
i.e., $n$ is replaced by the number of inliers.

\subsection{$\hat R$ for certain examples of loss functions}

\subsubsection{Square loss} 
As a first illustration of the above result,
consider the square loss $\rho(u)=u^2/2$. 
As we will detail in 
\Cref{sec:square_loss} devoted to the square loss,
$\hbpsi = \by-\bX\hbbeta$ is the residuals and
$\bV = (\partial/\partial\by)\hbpsi = \bI_n - \bX(\partial/\partial\by)\hbbeta$
by the chain rule so that $\hat R$ reduces to
\begin{equation*}
\hat R =
(n-\df)^{-2} \bigl\{ \|\hbpsi\|^2
    \bigl(2\df -p\bigr)
+ \|\bSigma^{-\frac12}\bX^\top\hbpsi\|^2\bigr\}.
\end{equation*}
Above, $\df=\trace[(\partial/\partial\by)\bX\hbbeta]$
is the usual effective number of parameters or effective
degrees-of-freedom of $\hbbeta$ that
dates back to \citet{stein1981estimation}.
This estimator of the out-of-sample error
for the square loss was known only for two specific penalty functions $g$.
The first is $g=0$ \cite{leeb2008evaluation} in which case $\hbbeta$ is
the Ordinary Least-Squares 
and $\df=p$.
The second is $g(\bb)=\lambda\|\bb\|_1$
\cite{bayati2013estimating,miolane2018distribution},
in which case $\hbbeta$ is the Lasso and $\df=|\{j\in[p]:\hbeta_j\ne 0\}|$.
For $g$ not proportional to the $\ell_1$-norm,
the above result is to our knowledge
novel, even restricted to the square loss.
As we detail in \Cref{sec:square_loss},
the algebraic nature of the square loss leads to additional results
for noise level estimation and adaptive estimation of the generalization error.
Here, adaptive means without knowledge of $\bSigma$.
To our knowledge, the estimate $\hat R$ for general loss functions
($\rho$ different than the square loss) is new.

\subsubsection{Huber loss}
As a second illustration, consider the Huber loss
$\rho_H(u)=\int_0^{|u|}\min(t,1)dt$, i.e.,
\begin{equation}
    \label{eq:Huber-loss}
\rho_H(u) = u^2/2 \text{ for } |u|\le 1
\quad\text{ and }\quad \rho_H(u) = (|u| - 1/2) \text{ for }|u|>1.
\end{equation}
By the chain rule in \eqref{eq:chain-rule-hard} below,
using \eqref{eq:derivatives-y} and noting that $\diag(\bpsi')=\diag(\bpsi')^2$
for the Huber loss,
$\trace[\bV]=\trace[(\partial/\partial \by)\hbpsi]
= \trace[\diag(\bpsi')(\bI_n - (\partial/\partial\by)\bX\hbbeta)]
=|\hat I| - \df$
where $\hat I = \{i\in[n]: \psi'(y_i-\bx_i^\top\hbbeta)>0\}$.
The out-of-sample estimate $\hat R$ becomes
$$\hat R
= (|\hat I| - \df)^{-2}
\bigl\{
    \|\hbpsi\|^2
    (2\df -p)
    + \|\bSigma^{-\frac12}\bX^\top\hbpsi\|^2
\bigr\}.
$$
This conveniently mimics the estimate available for the square loss, with 
the sample size $n$ replaced by the number of inliers $|\hat I|$.
If a scaled version of the Huber loss is used, i.e.,
with loss $\rho(y)=\Lambda_*^2\rho_H\big(\Lambda_*^{-1}u\big)$
for some scaling parameter $\Lambda_*>0$,
then the previous display still holds.

\subsection{When is \texorpdfstring{$\trace[\bV]= \trace[(\partial/\partial\by)\hbpsi]/n$}{trace dpsi/dy} too small or 0? }

\label{subsec:too_small}

We emphasize that the above result does not provide guarantees
against all forms of corruption in the data, and $\hat R$
may produce incorrect inferences (or be undefined due to division by 0)
if the multiplicative factor 
$(\frac 1 n \trace \bV)^2 
= 
(\frac 1 n \trace[(\partial/\partial\by)\hbpsi])^2$ is too small
or equal to 0.
This issue does not arise under \Cref{assum:g}(ii) or (iii), as in this case
\Cref{thm:main_robust} grants $\frac 1 n \trace[\bV]$ larger than
some positive constant with high probability.

Recall that $\frac 1 n \trace\bV = \frac 1 n \trace[(\partial/\partial\by)\hbpsi] \in [0,1]$
by Proposition~\ref{prop:jacobian-psd-htheta}.
To exhibit situations for which 
$\frac 1 n \trace[(\partial/\partial\by)\hbpsi]$ is close to 0
under \Cref{assum:g}(i),
by the chain rule \eqref{eq:chain-rule-hard}
below we have
\begin{equation*}
    \tfrac 1 n \trace[\bV] = 
\tfrac 1 n \trace[(\partial/\partial\by)\hbpsi]
=
\tfrac 1 n \trace[\diag(\bpsi')(\bI_n - \bX (\partial/\partial\by)\hbbeta)].
\end{equation*}
Hence the above multiplicative factor is equal to 0
when $\diag(\bpsi') = \mathbf{0}$, i.e., $\psi'(y_i -\bx_i^\top\hbbeta) = 0$
for all observations $i=1,...,n$: If all observations are classified as outliers
by the minimization problem \eqref{M-estimator-rho} then
$\hat R$ is undefined and cannot be used. On the other hand, by \Cref{thm:main_robust} under \Cref{assum:g}(i)
the relationship
\begin{equation}
    \label{eq:previous-display-upper-bound-hat-R}
\bigl(\tfrac 1 n \trace[(\partial/\partial\by)\hbpsi]\bigr)^2
\big|\|\bSigma^{\frac12}\bh\|^2  - \hat R\big|
\le\Rem (\|\bSigma^{\frac12}\bh\|^2 + \|\hbpsi\|^2/n)
\end{equation}
always holds with $\Rem=O_\P(n^{-1/2})$, which suggests that
$(\frac 1 n \trace[\bV])^2 = (\frac 1 n \trace[(\partial/\partial\by)\hbpsi])^2$
must be bounded away from 0 in order to obtain meaningful upper bounds
on $\big|\|\bSigma^{\frac12}\bh\|^2  - \hat R\big|$.
If the loss is strongly convex and $\gamma<1$
as in \Cref{assum:g}(ii),
or under \Cref{assum:g}(iii.a) or (iii.b)
for $\ell_1$ penalty with square or Huber loss, the factor
$\big(\frac 1 n \trace[(\partial/\partial\by)\hbpsi]\big)^2$
is bounded away from 0 as noted in the second claim
of \Cref{thm:main_robust}.
However, $\big(\frac 1 n \trace[(\partial/\partial\by)\hbpsi]\big)^2$
is not necessarily bounded away from 0 under
\Cref{assum:g}(i): Indeed it is easy to construct an example
where $\psi'(y_i - \bx_i^\top\hbbeta)=0$ for all $i\in[n]$
with high probability, for instance for the Huber loss
$\rho=\rho_H$ defined in \eqref{eq:Huber-loss}
with penalty
$g(\bb)=K\|\bb-\ba\|^2$ for some large $K$ and some vector $\ba\in\R^p$
with large distance $\|\ba-\bbeta\|$
(this is a purposely poor choice of penalty function that will induce
a large error $\|\bSigma^{\frac12}\bh\|^2$).
This example highlights that the above result does not provide
estimation guarantees against all forms of corruption
under \Cref{assum:g}(i) without further assumption:
If the corruption is so strong that all observations are outliers and
$\trace[(\partial/\partial\by) \hbpsi] = 0$ then
$\hat R$ is undefined and
the inequality of \Cref{thm:main_robust} is unusable to estimate
or bound from above the out-of-sample error.

\subsection{Closed form expression for specific choices of $(\rho,g)$}
The multiplicative factors
$\trace[ (\partial/\partial \by)\hbpsi]$
and
$\df = \trace[(\partial/\partial \by)\bX\hbbeta]$
have explicit closed form expressions for particular 
choices of $(\rho,g)$. We now provide such examples;
a summary is provided in \Cref{tab:my-table}.
The next section provides a general method to approximate
$\trace[ (\partial/\partial \by)\hbpsi]$ and $\df$
for arbitrary $(\rho,g)$ when no closed form
expressions are available.

\begin{proposition}
    \label{prop:specific-ENet}
Assume that $\psi$ is 1-Lipschitz and 
consider an Elastic-Net penalty of the form $g(\bb) = \mu \|\bb\|^2/2 + \lambda \|\bb\|_1$
for $\mu>0$, $\lambda\ge 0$.
For almost every $(\by,\bX)$,
the map $\by\mapsto\bX\hbbeta$ is differentiable at $\by$ and
\begin{equation}
(\partial/{\partial \by}) \bX\hbbeta
= \bX_{\hat S} (\bX_{\hat S}^\top\diag(\bpsi')\bX_{\hat S} + \pb{n}\mu \bI_{|\hat S|} )^{-1}
\bX_{\hat S}^\top\diag(\bpsi') 
\label{eq:df-Elastic-Net}
\end{equation}
where $\hat S = \{j\in[p]: \hbeta_j\ne 0\}$
and $\bX_{\hat S}$ is the submatrix of $\bX$ obtained made of columns indexed
in $\hat S$, and
\begin{equation}
\label{eq:trace-V-Elastic-Net}
\tfrac{\partial\hbpsi}{\partial\by}
= \diag(\bpsi')^{\frac12}\bigl[\bI_n - \diag(\bpsi')^{\frac12}\bX_{\hat S} (\bX_{\hat S}^\top\diag(\bpsi')\bX_{\hat S} + \pb{n}\mu \bI_{|\hat S|} )^{-1}
\bX_{\hat S}^\top\diag(\bpsi')^{\frac12}\bigr]\diag(\bpsi')^{\frac12}.
\end{equation}
\end{proposition}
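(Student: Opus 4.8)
The plan is to obtain the formulas for $\frac{\partial}{\partial\by}\bX\hbbeta$ and $\frac{\partial}{\partial\by}\hbpsi$ by differentiating the KKT (first-order optimality) conditions of the optimization problem \eqref{M-estimator-rho} with respect to $\by$. Since $g$ is $\mu$-strongly convex (with $\mu>0$), the minimizer $\hbbeta$ is unique and, as established in \Cref{sec:lipschitz-properties}, the map $\by\mapsto\hbbeta$ is Lipschitz, hence differentiable for almost every $(\by,\bX)$ by Rademacher's theorem; so we may work at a point of differentiability. The optimality condition reads $-\frac1n\bX^\top\psi(\by-\bX\hbbeta) + \nabla(\mu\|\cdot\|^2/2)(\hbbeta) + \lambda\bz = \mathbf 0$ where $\bz\in\partial\|\hbbeta\|_1$, i.e. $\frac1n\bX^\top\hbpsi = \mu\hbbeta + \lambda\bz$. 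The key structural fact is that on the (generically locally constant) active set $\hat S=\{j:\hbeta_j\ne0\}$, the subgradient component $z_j=\sgn(\hbeta_j)$ is locally constant, while off $\hat S$ one has $\hbeta_j=0$ locally; both facts hold on a full-measure set, and I would cite the relevant statement from \Cref{sec:lipschitz-properties} for the almost-everywhere local constancy of $\hat S$ and of $\sgn(\hbeta_j)$ on $\hat S$.

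First I would restrict attention to the active coordinates. Writing $\hbbeta_{\hat S}$ for the subvector indexed by $\hat S$, the stationarity equations for $j\in\hat S$ give $\frac1n\bX_{\hat S}^\top\psi(\by-\bX_{\hat S}\hbbeta_{\hat S}) = \mu\hbbeta_{\hat S} + \lambda\bz_{\hat S}$ (using $\bX\hbbeta=\bX_{\hat S}\hbbeta_{\hat S}$). Differentiating both sides with respect to $\by$, and using that $\bz_{\hat S}$ is locally constant so its derivative vanishes, the left side produces $-\frac1n\bX_{\hat S}^\top\diag(\bpsi')\bX_{\hat S}\,\frac{\partial\hbbeta_{\hat S}}{\partial\by} + \frac1n\bX_{\hat S}^\top\diag(\bpsi')$ by the chain rule (here $\bpsi'\in\R^n$ has $i$-th entry $\psi'(y_i-\bx_i^\top\hbbeta)$ and $\diag(\bpsi')$ is the componentwise Jacobian of $\psi$ at the residuals), while the right side produces $\mu\,\frac{\partial\hbbeta_{\hat S}}{\partial\by}$. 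Collecting terms yields $\bigl(\frac1n\bX_{\hat S}^\top\diag(\bpsi')\bX_{\hat S} + \mu\bI_{|\hat S|}\bigr)\frac{\partial\hbbeta_{\hat S}}{\partial\by} = \frac1n\bX_{\hat S}^\top\diag(\bpsi')$. Since $\mu>0$ and $\diag(\bpsi')\succeq 0$ (as $\psi$ is $1$-Lipschitz and monotone, $0\le\psi'\le1$), the matrix on the left is invertible; solving and left-multiplying by $\bX_{\hat S}$ (and noting $\bX\hbbeta=\bX_{\hat S}\hbbeta_{\hat S}$, while the inactive coordinates contribute nothing because they are identically zero locally) gives exactly \eqref{eq:df-Elastic-Net}.

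Next, for $\frac{\partial}{\partial\by}\hbpsi$, I would apply the chain rule to $\hbpsi=\psi(\by-\bX\hbbeta)$: $\frac{\partial\hbpsi}{\partial\by} = \diag(\bpsi')\bigl(\bI_n - \frac{\partial}{\partial\by}\bX\hbbeta\bigr)$. Substituting the formula just derived for $\frac{\partial}{\partial\by}\bX\hbbeta$ gives $\diag(\bpsi')\bigl(\bI_n - \bX_{\hat S}(\bX_{\hat S}^\top\diag(\bpsi')\bX_{\hat S}+n\mu\bI)^{-1}\bX_{\hat S}^\top\diag(\bpsi')\bigr)$ (after clearing the factor $1/n$ into $n\mu$). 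To match the symmetric form \eqref{eq:trace-V-Elastic-Net}, I would factor $\diag(\bpsi') = \diag(\bpsi')^{1/2}\diag(\bpsi')^{1/2}$ and, inside the bracket, push one $\diag(\bpsi')^{1/2}$ factor from each side through: a short manipulation using $\diag(\bpsi')\bX_{\hat S}(\cdots)^{-1}\bX_{\hat S}^\top\diag(\bpsi') = \diag(\bpsi')^{1/2}\bigl[\diag(\bpsi')^{1/2}\bX_{\hat S}(\cdots)^{-1}\bX_{\hat S}^\top\diag(\bpsi')^{1/2}\bigr]\diag(\bpsi')^{1/2}$ yields precisely \eqref{eq:trace-V-Elastic-Net}. (This symmetric rewriting also makes transparent that $\frac{\partial\hbpsi}{\partial\by}$ is symmetric positive semidefinite with operator norm at most one, consistent with \Cref{prop:jacobian-psd-htheta}.)

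The main obstacle is purely in justifying the differentiation of the KKT system rather than in the algebra: one must argue that, for almost every $(\by,\bX)$, the active set $\hat S$ and the sign pattern $\bz_{\hat S}$ are constant in a neighborhood (so that their contribution to the derivative genuinely vanishes), and that the reduced problem on $\hat S$ is smooth there. This is exactly the kind of statement I would borrow wholesale from the Lipschitz/differentiability analysis in \Cref{sec:lipschitz-properties} (Rademacher plus a local-constancy-of-support argument for Elastic-Net, which is standard for $\ell_1$-type penalties once $\mu>0$ removes any degeneracy); everything else is bookkeeping with the chain rule and a rank-one-free matrix identity. I would therefore structure the proof as: (1) invoke a.e.\ differentiability and local constancy of $(\hat S,\bz_{\hat S})$; (2) differentiate the stationarity equation restricted to $\hat S$ and solve; (3) apply the chain rule for $\hbpsi$ and perform the symmetric rewriting.
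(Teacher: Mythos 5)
Your algebraic skeleton matches the paper's: restrict the stationarity equation to $\hat S$, differentiate, solve the linear system $\bigl(\bX_{\hat S}^\top\diag(\bpsi')\bX_{\hat S}+n\mu\bI_{|\hat S|}\bigr)(\partial/\partial\by)\hbbeta_{\hat S}=\bX_{\hat S}^\top\diag(\bpsi')$, then apply the chain rule \eqref{eq:chain-rule-hard} and symmetrize. That part is correct. The gap is in step (1), which you yourself identify as the main obstacle and then dispose of by "borrowing wholesale from \Cref{sec:lipschitz-properties}." No such statement exists there: that section gives Lipschitz continuity of $(\by,\bX)\mapsto(\hbbeta,\hbpsi)$ and hence a.e.\ Frechet differentiability via Rademacher, but a.e.\ differentiability of $\hbbeta$ does not imply that $\hat S$ is locally constant, and without local constancy you cannot conclude $(\partial/\partial\by)\hbbeta_{\hat S^c}=\mathbf{0}$ or discard the derivative of the subgradient. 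Local constancy of $\hat S$ is exactly what the paper has to prove inside the proof of this proposition, and it is the only genuinely non-algebraic content.

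Concretely, the missing argument is that the KKT conditions hold \emph{strictly} with probability one: for every $j\notin\hat S$, $\be_j^\top\bX^\top\hbpsi\in(-n\lambda,n\lambda)$ with strict inclusion, after which continuity of $(\by,\bX)\mapsto\be_j^\top\bX^\top\hbpsi-n\mu\hbeta_j$ keeps the inactive coordinates strictly inside the boundary in a neighborhood, so $\hat S$ (and the signs on $\hat S$) are locally constant. Strong convexity ($\mu>0$) gives uniqueness of $\hbbeta$ but does \emph{not} rule out the degenerate event that some inactive coordinate sits exactly on the boundary $|\be_j^\top\bX^\top\hbpsi|=n\lambda$; ruling that out requires \Cref{assum:Sigma}'s continuous-distribution hypothesis. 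The paper's argument fixes $j_0$, introduces the solution $\hbalpha$ of the problem constrained to $\hbeta_{j_0}=0$, observes that $\{j_0\notin\hat S\}=\{\hbalpha=\hbbeta\}$ by uniqueness, and uses that $\bX\be_{j_0}$ has a continuous conditional distribution given $(\bX_{-j_0},\by)$ so that $\be_{j_0}^\top\bX^\top\psi(\by-\bX\hbalpha)$ has no atom at $\pm n\lambda$; a union bound over $j_0$ finishes. Your proof needs this (or an equivalent) argument spelled out; calling it "standard once $\mu>0$ removes any degeneracy" misattributes the source of non-degeneracy, which is the randomness of the design, not the strong convexity.
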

The proof is given in \Cref{proof:ENet-Huber-Lasso}.
For the Elastic-Net penalty,
the factors $
\df = \trace [(\partial/\partial \by) \bX\hbbeta]$ and $
\trace [ (\partial / \partial \by )\hbpsi ]$
appearing in the out-of-sample estimate
$\hat R$ have thus reasonably tractable forms and can be computed efficiently
by inverting a matrix of size $\hat S$ once the elastic-net
estimate $\hbbeta$ has been computed.
The above estimates for general loss functions
are closely related to the formula
for
$(\partial/\partial \by)\bX\hbbeta$ known for the Elastic-Net
with square loss
\cite[Equation (28)]{tibshirani2012}
\cite[Section 3.5.3]{bellec_zhang2018second_stein},
the only difference being several multiplications by the diagonal matrix $\diag(\bpsi')$.
Closed form expressions can also be obtained for different penalty
functions, such as the Group-Lasso penalty, by differentiating the KKT
conditions as explained in
\cite{bellec_zhang2019second_poincare} for the square loss.

For the Huber loss with $\ell_1$-penalty,
these multiplicative factors are even simpler, as shown
in the following proposition. We keep using the notation
$\hat I=\{i\in[n]: \psi'(y_i-\bx_i^\top\hbbeta)>0\}$
for the set of inliers (the set of outliers being $[n]\setminus \hat I$),
and $\hat S=\{j\in[p]:\hbeta_j\ne 0\}$ for the set of active covariates.

\begin{proposition}
    \label{prop:Huber-Lasso}
    Let $\rho(u)=n\lambda_*^2\rho_H((\sqrt n\lambda_*)^{-1} u)$ where $\rho_H$ is the Huber loss and let $g(\bb)=\lambda\|\bb\|_1$ be the penalty for $\lambda_*,\lambda>0$.
    For almost every $(\by,\bX)$,
    the functions $\by\mapsto \hat I, \by\mapsto \hat S$ and $\by\mapsto\diag(\bpsi')$ are constant in a neighborhood of $\by$
    and 
    $\hbQ \defas \pb{\diag(\bpsi')} (\partial  /\partial \by) \bX\hbbeta$
is the orthogonal projection onto the column span of $\diag(\bpsi')\bX_{\hat S}$.
Furthermore
$
(\partial / \partial \by )\hbpsi
= \diag(\bpsi') - \hbQ
$ and the multiplicative factors appearing in $\hat R$ satisfy for almost
every $(\by,\bX)$
$$
\df = 
\trace [\diag(\bpsi')(\partial/\partial \by) \bX\hbbeta] = |\hat S|,
\qquad
\trace [ (\partial / \partial \by )\hbpsi ] = |\hat I| - |\hat S|\ge0.
$$
\end{proposition}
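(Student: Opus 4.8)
The plan is to exploit the piecewise quadratic‑plus‑linear structure of the Huber loss together with the polyhedral structure of the $\ell_1$ penalty, so that near a generic $(\by,\bX)$ the estimator is an explicit smooth (indeed affine in $\by$) function whose Jacobian can be read off from the KKT conditions. First I would record that, writing $\psi_H=\rho_H'$ (so $\psi_H(u)=u$ for $|u|\le1$ and $\psi_H(u)=\sgn(u)$ for $|u|>1$), the rescaled loss $\rho(u)=n\lambda_*^2\rho_H\big((\sqrt n\lambda_*)^{-1}u\big)$ has $\psi(u)=\rho'(u)=\sqrt n\lambda_*\,\psi_H\big((\sqrt n\lambda_*)^{-1}u\big)$; hence $\psi(u)=u$ and $\psi'(u)=1$ when $|u|<\sqrt n\lambda_*$, while $\psi(u)=\sqrt n\lambda_*\sgn(u)$ and $\psi'(u)=0$ when $|u|>\sqrt n\lambda_*$. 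In particular $\psi'$ is $\{0,1\}$‑valued off a null set, so $\diag(\bpsi')$ is the orthogonal projection onto $\mathrm{span}\{\be_i:i\in\hat I\}$ and $\diag(\bpsi')^2=\diag(\bpsi')$, while $\hpsi_i=y_i-\bx_i^\top\hbbeta$ for $i\in\hat I$ and $\hpsi_i=\sqrt n\lambda_*\sgn(y_i-\bx_i^\top\hbbeta)$ for $i\in\hat O$.

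The second step, which I expect to be the main obstacle, is a general‑position lemma: for Lebesgue‑almost every $(\by,\bX)$ the minimizer $\hbbeta$ is unique, no residual $y_i-\bx_i^\top\hbbeta$ has modulus exactly $\sqrt n\lambda_*$ (so every index is a strict inlier or strict outlier), and the inactive dual coordinates are strict, $|\tfrac1n(\bX^\top\hbpsi)_j|<\lambda$ for $j\notin\hat S$. This is the Huber--$\ell_1$ analogue of the standard general‑position and uniqueness results for the Lasso, and I would prove it by the usual route: $\by\mapsto\bX\hbbeta$ is continuous and piecewise affine, $\R^n\times\R^{n\times p}$ splits into finitely many polyhedral cells on each of which the configuration $(\hat S,\ \sgn(\hbbeta_{\hat S}),\ \hat I,\ (\sgn(y_i-\bx_i^\top\hbbeta))_{i\in\hat O})$ is constant, and the degeneracies above (ties at the Huber thresholds, ties in dual feasibility, non‑uniqueness) occur only on lower‑dimensional faces, hence on a null set; existence of the Jacobians $(\partial/\partial\by)\hbbeta,(\partial/\partial\by)\hbpsi$ for a.e.\ $(\by,\bX)$ is already granted by \Cref{sec:lipschitz-properties}. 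Granting this, I would deduce full column rank of the submatrix $\bX_{\hat I,\hat S}$ with rows $\hat I$ and columns $\hat S$, hence $|\hat I|\ge|\hat S|$: if $\bv\ne0$ is supported on $\hat S$ with $\bX_{\hat I,\hat S}\bv_{\hat S}=0$, then for small $|t|$ the vector $\hbbeta+t\bv$ keeps the same support, signs, inlier residuals, and (by strictness) the same classification of all indices, so the objective along $t\mapsto\hbbeta+t\bv$ is a constant plus an affine function of $t$; minimality at $t=0$ forces that affine part to be constant, contradicting uniqueness. Finally, strictness of the three conditions shows the configuration — in particular $\hat S,\hat I$ and $\diag(\bpsi')$ — is constant on a neighbourhood of $\by$.

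Third, on such a neighbourhood I would use the KKT condition $\tfrac1n\bX^\top\hbpsi\in\lambda\,\partial\|\hbbeta\|_1$, whose active block is $\tfrac1n\bX_{\hat S}^\top\hbpsi=\lambda\,\sgn(\hbbeta_{\hat S})$ (constant), together with $\hbpsi=\diag(\bpsi')(\by-\bX_{\hat S}\hbbeta_{\hat S})+\bc$ where $\bc$ is supported on $\hat O$ with $c_i=\sqrt n\lambda_*\sgn(y_i-\bx_i^\top\hbbeta)$ constant, and $\bX\hbbeta=\bX_{\hat S}\hbbeta_{\hat S}$. Differentiating the active block in $\by$ gives $\bX_{\hat S}^\top\diag(\bpsi')\big(\bI_n-\bX_{\hat S}(\partial/\partial\by)\hbbeta_{\hat S}\big)=0$, i.e.\ $\big(\bX_{\hat S}^\top\diag(\bpsi')\bX_{\hat S}\big)(\partial/\partial\by)\hbbeta_{\hat S}=\bX_{\hat S}^\top\diag(\bpsi')$; since $\bX_{\hat S}^\top\diag(\bpsi')\bX_{\hat S}=\bX_{\hat I,\hat S}^\top\bX_{\hat I,\hat S}$ is invertible by the previous step, $(\partial/\partial\by)\hbbeta_{\hat S}=(\bX_{\hat S}^\top\diag(\bpsi')\bX_{\hat S})^{-1}\bX_{\hat S}^\top\diag(\bpsi')$ and hence $(\partial/\partial\by)\bX\hbbeta=\bX_{\hat S}(\bX_{\hat S}^\top\diag(\bpsi')\bX_{\hat S})^{-1}\bX_{\hat S}^\top\diag(\bpsi')$, which is formula \eqref{eq:df-Elastic-Net} with $\mu=0$.

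Finally I would read off the conclusions. Left‑multiplying by $\diag(\bpsi')$ and using $\diag(\bpsi')^2=\diag(\bpsi')$, the matrix $\hbQ=\diag(\bpsi')(\partial/\partial\by)\bX\hbbeta$ equals $A(A^\top A)^{-1}A^\top$ with $A=\diag(\bpsi')\bX_{\hat S}$, so $\hbQ$ is symmetric and idempotent, i.e.\ the orthogonal projection onto $\mathrm{range}(A)=$ the column span of $\diag(\bpsi')\bX_{\hat S}$, of dimension $\rank(A)=\rank(\bX_{\hat I,\hat S})=|\hat S|$. The chain rule gives $(\partial/\partial\by)\hbpsi=\diag(\bpsi')\big(\bI_n-(\partial/\partial\by)\bX\hbbeta\big)=\diag(\bpsi')-\hbQ$. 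By the cyclic property of the trace, $\df=\trace[(\partial/\partial\by)\bX\hbbeta]=\trace[(\bX_{\hat S}^\top\diag(\bpsi')\bX_{\hat S})^{-1}\bX_{\hat S}^\top\diag(\bpsi')\bX_{\hat S}]=|\hat S|$, equivalently $\trace[\diag(\bpsi')(\partial/\partial\by)\bX\hbbeta]=\trace[\hbQ]=|\hat S|$; and $\trace[(\partial/\partial\by)\hbpsi]=\trace[\diag(\bpsi')]-\trace[\hbQ]=|\hat I|-|\hat S|$, which is $\ge0$ since $\mathrm{range}(\hbQ)\subseteq\mathrm{range}(\diag(\bpsi'))$ (or directly by Proposition~\ref{prop:jacobian-psd-htheta}). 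The identity $\df=|\hat S|\le|\hat I|$ also supplies the bound referenced after Proposition~\ref{prop:jacobian-psd-htheta}.
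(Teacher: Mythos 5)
Your overall architecture is sound and genuinely different in execution from the paper's: you differentiate the KKT conditions of the Huber--$\ell_1$ problem directly and obtain the explicit Jacobian $(\partial/\partial\by)\bX\hbbeta=\bX_{\hat S}(\bX_{\hat S}^\top\diag(\bpsi')\bX_{\hat S})^{-1}\bX_{\hat S}^\top\diag(\bpsi')$, from which all the stated conclusions follow by linear algebra, whereas the paper rewrites $\hbbeta$ as the first $p$ coordinates of a Lasso solution $\bbetabar$ with augmented design $\bXbar=[\bX|\kappa\bI_n]$ and imports uniqueness, the sparsity bound $\|\bbetabar\|_0\le n$, strict KKT, and the identification of $(\partial/\partial\by)\bXbar\bbetabar$ as an orthogonal projection from prior Lasso results. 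Your full-column-rank argument for $\bX_{\hat I,\hat S}$ (a feasible direction along which the objective is constant plus affine, contradicting uniqueness) is correct and is a nice substitute for the paper's counting argument $|\hat S|+(n-|\hat I|)\le n$. The concluding algebra (idempotency of $\diag(\bpsi')$, $\hbQ=A(A^\top A)^{-1}A^\top$ with $A=\diag(\bpsi')\bX_{\hat S}$, the trace identities) is all right.

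The gap is in your second step, which you correctly flag as the main obstacle but then only sketch. The claim that the degeneracies (a residual with $|y_i-\bx_i^\top\hbbeta|$ exactly at the Huber threshold, an inactive dual coordinate exactly at $n\lambda$, non-uniqueness) ``occur only on lower-dimensional faces, hence on a null set'' is not automatic: the polyhedral cells of the piecewise-affine solution map depend on $\bX$, and for a fixed degenerate $\bX$ the bad set of $\by$ can have positive Lebesgue measure. The standard way to exclude this is a general-position condition on the design, and here the effective design is $\bXbar=[\bX|\kappa\bI_n]$, half of whose columns are deterministic, so general position does \emph{not} follow merely from $(\by,\bX)$ having a continuous distribution. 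The paper addresses precisely this point: for every choice of $n+1$ columns of $\bXbar$ and sign pattern, it shows the determinant of the associated $(n{+}1)\times(n{+}1)$ matrix is a nonzero polynomial in the entries of $\bX$ (by exhibiting one $\bX$ where it does not vanish), hence its zero set is Lebesgue-null; uniqueness, $\|\bbetabar\|_0\le n$ and strict KKT then follow from the cited Lasso results, and local constancy of $\hat S$, $\hat I$, $\diag(\bpsi')$ follows from strictness. To complete your proof you need this argument, or an equivalent one carried out directly on the Huber--$\ell_1$ KKT system; without it, the uniqueness, the strict inlier/outlier dichotomy, and the local constancy of the configuration --- on which every subsequent step of your proof rests --- remain unproved.
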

The proof is given in \Cref{proof:ENet-Huber-Lasso}.
Proposition~\ref{prop:Huber-Lasso} implies that for the Huber loss with $\ell_1$-penalty,
the out-of-sample error estimate $\hat R$ becomes simply
\begin{equation}
\hat R =(|\hat I| - |\hat S|)^{-2}
\big\{
    \|\hbpsi\|^2(2|\hat S| -p)
    +
    \|\bSigma^{-\frac12}\bX^\top\hbpsi\|^2
\big\}.
\label{eq:hat-R-Huber-Lasso}
\end{equation}
For the square-loss and identity covariance, the above estimate was known
\cite{bayati2013estimating,miolane2018distribution}
with $|\hat I|$ replaced by $n$.
In hindsight the extension of this estimate to the Huber loss
is natural: the sample size should be replaced by
the number of observed inliers $|\hat I|$.

\subsection{Proof ingredients and a new probabilistic inequality}
\label{sec:proof-sketch}
Preliminaries for the proofs are twofold.
First several Lipschitz
properties are derived, to make sure that the derivatives used in the proofs
exist almost surely.
This is done in \Cref{sec:lipschitz-properties}.
Second, without loss of generality we may assume that $\bSigma=\bI_p$, replacing
if necessary $(\bX,\bbeta,\hbbeta,g)$ by $(\bX^*,\bbeta^*,\hbbeta{}^*,g^*)$ as follows,
\begin{equation}
    \label{eq:variable-change}
    \bX\rightsquigarrow \bX^* =  \bX\bSigma^{-\frac12},
    \quad
    g(\cdot) \rightsquigarrow g^*(\cdot) = g(\bSigma^{-\frac12}(\cdot)),
    \quad
    \hbbeta \rightsquigarrow \hbbeta{}^* = 
    \bSigma^{\frac12}\hbbeta,
    \quad
    \bbeta \rightsquigarrow \bbeta^* = 
    \bSigma^{\frac12}\bbeta
    .
\end{equation}
This change of variable leaves the quantities $\{\by, \df, \hbpsi, \bX\hbbeta,
\|\bSigma^{-\frac12}\bX^\top\hbpsi\|^2,\|\bSigma^{\frac12}\bh\|^2,\trace[\bV]\}$ unchanged,
so that \Cref{thm:main_robust} holds for general $\bSigma$
if it holds for $\bSigma=\bI_p$ after the change of variable in
\eqref{eq:variable-change}.
Next, throughout the proof we 
consider the scaled version of $\hbpsi$ and the error vector $\bh$ given by
\begin{equation}
    \label{eq:br-bh-proof-overview}
\br = n^{-\frac12} \hbpsi = n^{-\frac12} \psi(\by-\bX\hbbeta),
\qquad
\bh = \hbbeta-\bbeta 
\end{equation}
so that $\|\br\|^2$ and $\|\bh\|^2$ are of the same order.

At this point the main ingredients of the proof are threefold.
The first ingredient is the following.
\begin{restatable*}{proposition}{propSOSmatrix}
    \label{prop:SOS-X-D-bh-br-main-text}
    Let $\bX=(x_{ij})\in\R^{n\times p}$ with  iid $N(0,1)$ entries and $\bfeta:\R^{n\times p}\to \R^p$,
    $\brho:\R^{n\times p}\to \R^n$ two vector valued functions,
    with weakly differentiable components
    $\eta_1,...,\eta_p$ and $\rho_1,...,\rho_n$.
    Then
\begin{align}
    \E\Big[\Big(
            {\brho^\top \bX \bfeta}
    -
    \sum_{i=1}^n
    \sum_{j=1}^p
        \frac{\partial(\rho_i\eta_j)}{\partial x_{ij}}
    \Big)^2
\Big]
    &=
    \E\Big[{\| \brho\|^2 \|\bfeta\|^2}
    +
    \sum_{j=1}^p\sum_{k=1}^p
    \sum_{i=1}^n\sum_{l=1}^n
        \frac{\partial(\rho_i \eta_j)}{\partial x_{lk}}
        \frac{\partial(\rho_l \eta_k)}{\partial x_{ij}}
    \Big]
    \nonumber
    \\ &\le
    \E\Big[{\| \brho\|^2 \|\bfeta\|^2}
    +
    \sum_{j=1}^p
    \sum_{i=1}^n
    \Big\|
        \frac{\partial(\brho \bfeta^\top)}{\partial x_{ij}}
    \Big\|_F^2
    \Big]
\label{eq:to-evaluate-SOS}
\end{align}
    provided that the second line is finite,
    where for brevity we write $\brho=\brho(\bX)$, $\bfeta=\bfeta(\bX)$,
    and similarly for the partial derivatives (i.e.,
    omitting the dependence in $\bX$).
\end{restatable*}
The proof is given in \Cref{sec:6-identitities-gaussian}.
In practice for the proofs of the main theorems, we take
\begin{equation}
\bfeta(\bX) =(\|\br\|^2 + \|\bh\|^2)^{-1/2} \bX^\top\br
\quad\text{ and }\quad
\brho(\bX) = (\|\br\|^2 + \|\bh\|^2)^{-1/2} \br 
\label{eq:bfeta-brho}
\end{equation}
with $\br,\bh$ defined in \eqref{eq:br-bh-proof-overview}.
The equality in \eqref{eq:to-evaluate-SOS} is a matrix generalization
of
\cite[Eq. (8.6)]{stein1981estimation}, \cite{bellec_zhang2018second_stein}.
Its proof
relies on Gaussian integrations by parts and presents no difficulty,
although it requires 
some bookkeeping for the different summation signs and indices.
The result of \cite{bellec_zhang2018second_stein}, that covers 
the case $p=1,\bfeta=1$, is recalled in Proposition~\ref{prop:BZ18}.
Although the above matrix formulation is new and particularly
useful for our purpose, it is essentially
equivalent to the $p=1,\bfeta=1$ case after vectorization
as explained in \Cref{sec:6-identitities-gaussian}.

The second ingredient is the following novel probabilistic inequality,
which is the main probabilistic contribution of the present paper.
\begin{restatable*}{theorem}{thmChiSquare}
    \label{thm:chi-square-type-main-text-rho}
    Assume that $\bX$ has iid $N(0,1)$ entries,
    that $\brho:\R^{n\times p}\to\R^n$ is weakly differentiable
    and that $\|\brho\|\le 1$ almost everywhere.
    Then
    \begin{equation}
\label{novel-inequality-chi2}
\E\Big|
p \|\brho\|^2
-
\sum_{j=1}^p\Big(
    \brho^\top\bX\be_j -\sum_{i=1}^n \frac{\partial \rho_i}{\partial x_{ij}} \Big)^2
\Big|
\le \Cl{novel}
\E
\Bigl[
1+
\sum_{i=1}^n\sum_{j=1}^p \|
    \frac{\partial \brho}{\partial x_{ij}} 
    \|^2
\Bigr]^{1/2}
\sqrt p
+
\Cr{novel} \E\sum_{i=1}^n\sum_{j=1}^p \|\frac{\partial \brho }{\partial x_{ij}} \|^2
    \end{equation}
where $\Cr{novel}>0$ is an absolute constant.
\end{restatable*}
The proof of \Cref{thm:chi-square-type-main-text-rho} is given in
\Cref{sec:proof_chi2}.
To our knowledge, inequality \eqref{novel-inequality-chi2} is novel.
In the simplest case, if $\brho$ is constant with $\|\brho\|=1$ then
\eqref{novel-inequality-chi2} reduces to
$\E|\chi^2_p - p| \le \C \sqrt p$ and the dependence in $\sqrt p$ is optimal,
so that \eqref{novel-inequality-chi2} is in a sense unimprovable.
The flexibility of inequality \eqref{novel-inequality-chi2} is that
the left hand side of \eqref{novel-inequality-chi2} is provably of order
$\sqrt p$, as in the case of $\E|\chi^2_p-p|$, as long as
the derivatives of $\brho$ do not vary too much in the sense
that $\E\sum_{i=1}^n\sum_{j=1}^p \|(\partial/\partial x_{ij})\brho\|^2 \le \Cl{lip}$ for some constant independent of $n,p$.
This inequality holds for instance for all $(\Cr{lip}/n)^{1/2}$-Lipschitz
functions $\brho:\R^{n\times p}\to \R^n$ since the squared Frobenius
norm of the Jacobian of $\brho$ is bounded by $n$ times the square
of the Lipschitz constant. 
A right-hand side of order $\sqrt p$ in \eqref{novel-inequality-chi2}
would be expected if the $p$ terms
$$A_j = \|\brho\|^2 - \bigl(
    \brho^\top\bX\be_j -\sum_{i=1}^n \frac{\partial \rho_i}{\partial x_{ij}} \bigr)^2
$$
were mean-zero and independent, thanks to
$\E[(\sum_{j=1}^p A_j)^2]=\sum_{j=1}^p \E[A_j^2]$
by independence.
The surprising feature of \eqref{novel-inequality-chi2}
is that such bound of order $\sqrt p$ holds despite
the intricate, nonlinear dependence between $A_j$
and the $p-1$ other terms $(A_k)_{k\ne j}$
through the $(\Cr{lip}/n)^{1/2}$-Lipschitz
function $\brho$ and its partial derivatives.
\pb{We now state two useful inequalities that follow directly by combining
    \Cref{thm:chi-square-type-main-text-rho} and \Cref{prop:SOS-X-D-bh-br-main-text}
for $\bfeta=\bX^\top\brho$.
\begin{corollary}
    \label{big_cor}
    Assume that $\bX$ has iid $N(0,1)$ entries,
    that $\brho:\R^{n\times p}\to\R^n$ is weakly differentiable
    and that $\|\brho\|\le 1$ almost everywhere.
    Then
    \begin{align}
\E\Big|
\|\bX^\top\brho\|^2
- p \|\brho\|^2
-
\sum_{j=1}^p\Big(
\sum_{i=1}^n \frac{\partial \rho_i}{\partial x_{ij}} \Big)^2
- 2\sum_{i=1}^n\sum_{j=1}^p
\rho_i\be_j^\top\bX^\top\frac{\partial \brho}{\partial x_{ij}}
\Big|
&\le 
\C RHS,
\label{eq:new_cor_first}
\\
\E\Big|
\sum_{j=1}^p
\brho^\top\bX\be_j\sum_{i=1}^n
\frac{\partial \rho_i}{\partial x_{ij}}
-
\sum_{j=1}^p\Big(
\sum_{i=1}^n \frac{\partial \rho_i}{\partial x_{ij}} \Big)^2
- \sum_{i=1}^n\sum_{j=1}^p
\rho_i\be_j^\top\bX^\top\frac{\partial \brho}{\partial x_{ij}}
\Big|
&\le \C RHS
\label{eq:new_cor_second}
    \end{align}
    where
    $RHS=
\E
[
    \sum_{i=1}^n\sum_{j=1}^p \|\frac{\partial \brho }{\partial x_{ij}} \|^2
]
+
\sqrt{p+n} 
+
    \E[(p +\|\bX\|_{op}^2)\sum_{i=1}^n\sum_{j=1}^p \|\frac{\partial \brho}{\partial x_{ij}} \|^2]^{\frac12}
$.
\end{corollary}
The proof is given in \Cref{sec:proof_chi2}.
Inequalities \eqref{eq:to-evaluate-SOS},
\eqref{novel-inequality-chi2},
\eqref{eq:new_cor_first} and \eqref{eq:new_cor_second}
involve derivatives with respect to the
entries of $\bX$.
It might thus be surprising at this point
that \Cref{thm:main_robust}
and the estimate $\hat R$ in \eqref{eq:hat-R}
involve the derivatives of $(\hbpsi,\hbbeta)$
with respect to $\by$ only,
and no derivatives
with respect to the entries of $\bX$.
The third major ingredient
of the proof is to provide gradient identities
between the derivatives of $\hbbeta,\hbpsi$ with
respect to $\by$ and those with respect 
to $\bX$, by identifying certain perturbations of the data $(\by,\bX)$ that leave $\hbbeta$ or $\hbpsi$ unchanged.
For instance,
\Cref{corollary:constant-bv-e_j}
shows that $\hbpsi$ stays the same and $\hbbeta$ is still solution
of the optimization problem \eqref{M-estimator-rho}
if the observed data $(\by,\bX)$ is replaced by
$(\by+\smash{\hbeta_j}\bv, \bX + \bv\be_j^\top)$
for any canonical basis vector $\be_j\in\R^p$
and any direction $\bv\in\R^n$ with $\bv^\top\smash{\hbpsi} = 0$.
If $\smash{\hbpsi}(\by,\bX)$ and $\smash{\hbbeta}(\by,\bX)$ are Frechet differentiable
with respect to $(\by,\bX)$, such perturbations that leaves
$\hbbeta,\hbpsi$ unchanged provide relationship
between the partial derivatives with respect to $\by$
and the partial derivatives with respect to to entries of $\bX$.
If $\hbbeta{}^0=\hbbeta(\by^0,\bX^0)$,
$\hbpsi{}^0=\hbpsi(\by^0,\bX^0)$ and similary for $\bpsi^0{}'$,
another perturbation that leaves
$\hbbeta$ unchanged at $(\by^0,\bX^0)$ is
\begin{equation}
\by(t)=
    \by + t(\bU\diag(\bpsi^0{}')\bX\hbbeta-\bU^\top\bpsi^0),
    \qquad
    \qquad
\bX(t))= \bX + t\bU \diag(\bpsi^0{}')
\label{perturationU}
\end{equation}
as $t\to 0$ for any fixed $\bU\in\R^{n\times n}$, in the sense that
$\frac{d}{dt}\hbbeta(\by(t),\bX(t))|_{t=0} = \mathbf 0$ when $\mu>0$
(i.e, the penalty is strongly convex). A more convenient form
of such results was developed in \cite{bellec2021derivatives} after the first version of the present manuscript appeared; we 
include it in the next lemma for convenience as it makes the proofs easier to
read.

\begin{restatable}[Variant of Theorem 1 in \cite{bellec2021derivatives}]{lemma}{lemmaNinja}
    \label{lemma:ninja-variant}
    Let $\mu\ge 0$ (allowing $\mu=0$).
    Let $\hbbeta$ 
    be the $M$-estimator \eqref{M-estimator-rho}
    with convex loss $\rho$
    and $\mu$-strongly convex penalty $g$
    with respect to a positive definite $\bSigma$
    in the sense of \Cref{assum:g}(i).
    Assume that $\hbbeta(\by,\bX)$
    and $\hbpsi(\by,\bX)$
    are Frechet
    differentiable at $(\by^0,\bX^0)$.
    Let $\hbbeta{}^0 = \hbbeta(\by^0,\bX^0)$
    and $\bpsi^0 = \psi(\by^0-\bX^0\hbbeta{}^0)$,
    as well as
    the $n\times n$ diagonal matrix
    $\bD^0=
    \diag(\bpsi{}^0{}') = \diag(\psi_1^0{}', ..., \psi_n^0{}')$
    where $\psi_i^0{}' = \psi'(y_i^0-\bx_i^0{}^\top\hbbeta{}^0)$ for each $i=1,...,n$.
    If $\psi$ is continuously differentiable
    and $\mu n \bSigma + \bX^0{}^\top\diag(\bpsi^0{}')\bX^0$ is positive
    definite then there exists a $p\times p$ matrix
    $\hbA{}(\by^0,\bX^0)$
    depending on $(\by^0,\bX^0)$ such that
    \begin{align}
    \tfrac{\partial \hbbeta}{\partial x_{ij}}
    (\by^0,\bX^0)
    &= \hbA(\by^0,\bX^0)\bigl[\be_j\psi_i^0 - \bX^0{}^\top\bD^0\be_i \hbeta_j^0\bigr]
    \qquad \text{ for all } i\in[n],j\in[p],
    \label{eq:derivatives-X}
    \\
    \tfrac{\partial \hbbeta}{\partial y_{l}}
    (\by^0,\bX^0)
    &= \hbA(\by^0,\bX^0)\bigl[\bX^0{}^\top\bD^0\be_l\bigr]
    \qquad\qquad\qquad\qquad \text{ for all } l\in[n],
    \label{eq:derivatives-y}
    \\
        \|\bSigma^{1/2}\hbA{}(\by^0,\bX^0)\bSigma^{1/2}\|_{op}
    &\le \phi_{\min}\bigl(\mu n \bI_p + \bSigma^{-1/2}\bX^\top \bD^0\bX\bSigma^{-1/2} \bigr)^{-1}.
    \label{eq:hbA-op}
    \end{align}
    If $\psi$ is only 1-Lipschitz (but not necessarily continuously
    or everywhere differentiable)
    and the function
    $(\by,\bX)\mapsto \hbbeta(\by,\bX)$ is Lipschitz
    in some open set $U$, then
    for almost every $(\by^0,\bX^0)$ in $U$,
    the chain rule
    \begin{equation}
        \label{eq:chain-rule-hard}
    \tfrac{\partial\hbpsi}{\partial x_{ij}}(\by^0,\bX^0)
    =\bD^0[-\bX^0\tfrac{\partial\hbbeta}{\partial x_{ij}}(\by^0,\bX^0) - \be_i \hbeta_j^0],
    \qquad
    \tfrac{\partial\hbpsi}{\partial y_l}(\by^0,\bX^0)
    =\bD^0[\be_l-\bX^0\tfrac{\partial\hbbeta}{\partial y_l}(\by^0,\bX^0)]
    \end{equation}
    and \eqref{eq:derivatives-X}-\eqref{eq:derivatives-y}-\eqref{eq:hbA-op} still hold
    for some $\hbA(\by^0,\bX^0)\in\R^{p\times p}$
    when the right-hand side of \eqref{eq:hbA-op} is finite.
\end{restatable}
We provide a short proof in \Cref{sec:gradient-identities}.
We will omit the $^0$ superscript and the explicit dependence on $(\by,\bX)$
for brevity, and write simply using the chain rule
\begin{align}
    \tfrac{\partial\hbbeta }{\partial x_{ij}}
    = \hbA[\be_j\hpsi_i - \bX^\top\diag(\bpsi')\be_i \hbeta_j],
    \qquad
    &
    \tfrac{\partial\hbbeta }{\partial y_l}
    = \hbA\bX^\top\diag(\bpsi')\be_l,
    \label{def-V}
    \\
    \tfrac{\partial\hbpsi}{\partial x_{ij}}
    = -\diag(\bpsi')\bX\hbA\be_j\hpsi_i - \bV\be_i \hbeta_j,
    \qquad
    &
    \tfrac{\partial\hbpsi}{\partial y_l}
    = \bV\be_l
    \quad\text{ where }\bV = \diag(\bpsi')-\diag(\bpsi')\bX\hbA\bX^\top\diag(\bpsi').
    \nonumber
\end{align}
Since the matrix $\hbA$ is the same in the derivatives with respect to $\bX$
and with respect to $\by$, \eqref{def-V} provides 
relationships between
the partial derivatives with respect to $\bX$ and to $\by$.
As we see in the next section, this
lets us evaluate the left-hand side of \eqref{eq:new_cor_first}
to obtain \Cref{thm:main_robust}.

\subsection{Proof of the main result}
\label{sec:proof-main-results}

As defined in \eqref{hbpsi-hbbeta-y-X}, we consider the functions
$\hbpsi=\hbpsi(\by,\bX)=\psi(\by-\bX\hbbeta)$ and $\hbbeta=\hbbeta(\by,\bX)$ as functions
of $(\by,\bX)\in\R^{n}\times \R^{n\times p}$.
At a point $(\by,\bX)$ where these functions are Frechet differentiable,
the statistician has access to the Jacobians and partial derivatives in \eqref{jacobians-hbpsi-hbbeta}.
These two functions, $\hbpsi$ and $\hbbeta$ are the only functions
of $(\by,\bX)$ that we will consider; the hat in $\hbpsi,\hbbeta$
emphasize that these are functions of $(\by,\bX)$.

In the proof, we argue conditionally on $\bep$ and consider functions of $\bX$ only, such as
\begin{equation}
    \label{bpsi-br-bh}
    \bpsi = \psi(\by-\bX\hbbeta), \qquad \br = n^{-\frac12}\bpsi = n^{-\frac12}\psi(\by-\bX\hbbeta),
\qquad
\bh = \hbbeta-\bbeta
\end{equation}
valued in $\R^n$, $\R^n$ and $\R^p$ respectively.
Formally, 
$\bpsi:\R^{n\times p}\to\R^n$,
$\br:\R^{n\times p}\to\R^n$,
$\bh:\R^{n\times p}\to \R^p$
and we view the functions in \eqref{bpsi-br-bh}
as functions of $\bX$ only while the noise $\bep$ is fixed.
We may write $\br=\br(\bX)$ to recall that convention.
We will denote their partial derivatives by $(\partial/\partial x_{ij})$.
With the above definitions,
the function $\bpsi=\bpsi(\bX)$ is related to $\hbpsi=\hbpsi(\by,\bX)$ by
$\bpsi = \hbpsi(\bX\bbeta + \bep,\bX)$ so that 
if $\hbpsi$ is Frechet differentiable at $(\by,\bX)$ we have
\bel{eq:chain-rule-r}
  \sqrt n  
  (\partial/\partial x_{ij})\br (\bX)
  &=&
  (\partial/\partial x_{ij})\bpsi (\bX)
\cr&=&
[(\partial/\partial x_{ij})\hbpsi(\by,\bX) + \beta_j(\partial/\partial y_i)\hbpsi(\by,\bX) ]
\cr&=&
- \diag(\bpsi')\bX\hbA\be_j \hpsi_i
- \bV \be_i (\hbeta_j-\beta_j)
\eel
with $\bV\in\R^{n\times n}$ given by \eqref{def-V}.

\begin{proof}[Proof of \Cref{thm:main_robust} under \Cref{assum:g}(i)]
    Let us start with the proof under the strongly convex assumption
    \Cref{assum:g}(i). By the change of variable \eqref{eq:variable-change},
    we may assume that $\bSigma=\bI_p$ and $\bX$ has iid $N(0,1)$ entries.

    By \Cref{prop:lipschtiz} and \eqref{eq:lipschitz-br-D-inverse}
    we have that the function $\brho(\bX)$ in \eqref{eq:bfeta-brho}
    is $K$-Lipschitz where $K=(n^{-1/2} 2 L_*)$ and $L_*=\max(1,\mu^{-1})$.
    The Frobenius norm of the Jacobian of a $K$-Lipschitz function
    $\R^{n\times p}\to \R^n$ is bounded above by the rank of the Jacobian
    times its squared operator norm, and the operator norm of the Jacobian
    is bounded from above by $K$. Thus the Frobenius norm of the Jacobian
    of $\brho$ satisfies 
    $\sum_{i=1}^n\sum_{j=1}^p \|\frac{\partial \brho}{\partial x_{ij}}
    \|^2
    \le n K^2$ because the rank is at most $n$.
    We obtain that the quantity
    RHS in the right-hand side of \eqref{eq:new_cor_first} is bounded
    from above by 
    \begin{equation}
        4\E[L_*^2] + \sqrt{p+n} + \E[(p+\|\bX\|_{op}^2)4L_*^2]^{1/2}
        \label{eq:RHS_2_18}
    \end{equation}
    which is smaller than $\sqrt n \C(\gamma,\mu)$ thanks to
    \Cref{lemma:Davidson} to bound from above
    the expectation of $\|\bX\|_{op}^2$ for a random matrix with iid $N(0,1)$
    entries.

    Writing $D=(\|\bh\|^2+\|\br\|^2)^{1/2}$ for the denominator,
    we have $\brho = \br D^{-1}$.
    Using \eqref{eq:chain-rule-r} and the product rule
    $\frac{\partial}{\partial x_{ij}}\brho
    =D^{-1}\frac{\partial}{\partial x_{ij}}\br
    +
    \br \frac{\partial}{\partial x_{ij}} D^{-1}
    $,
    the last term in the left-hand side
    of \eqref{eq:new_cor_first} equals
    \begin{align}
    - 2\sum_{i=1}^n\sum_{j=1}^p
    \rho_i\be_j^\top\bX^\top\frac{\partial \brho}{\partial x_{ij}}
    &=
    2\sum_{i=1}^n\sum_{j=1}^p
    \Bigl[
    \frac{
        \rho_i \be_j^\top\bX^\top\diag(\bpsi')\bX\hbA\be_j \hpsi_i
        +
        \rho_i \be_j^\top\bX^\top\bV \be_i h_j
        }{D \sqrt n}
    -
    \rho_i\be_j^\top\bX^\top \br \frac{\partial(D^{-1})}{\partial x_{ij}}
    \Bigr]
    \nonumber
    \\&=
    2\df \|\brho\|^2
    +
    \frac{2 \bh^\top\bX^\top\bV\brho}{D\sqrt n}   
    -
    2\sum_{i=1}^n\sum_{j=1}^p
    \rho_i\be_j^\top\bX^\top \br \frac{\partial(D^{-1})}{\partial x_{ij}}
    \label{eq:key_algebra-1}
    \end{align}
    thanks to $\hpsi_i /(D\sqrt n) = \rho_i$ and
    $\df=\sum_{j=1}^p \be_j^\top\bX^\top\diag(\bpsi')\bX\hbA\be_j
    = \trace[\bX^\top\diag(\bpsi')\bX\hbA]$ for the first term.
    For the second term,
    since $\bV$ is the Jacobian of $\hbpsi$ with respect to $\by$,
    by \Cref{prop:jacobian-psd-htheta} we have $\|\bV\|_{op} \le 1$.
    By the Cauchy-Schwarz inequality and using $\|\brho\|\le 1$,
    $\|\bh\|\le D$, the 
    absolute value of the second term is smaller
    than
    $2\|\bX n^{-1/2}\|_{op}$.
    By the Cauchy-Schwarz inequality, 
    the third term is smaller than
    $\|\bX^\top\br\|\|\brho\|
    [\sum_{i=1}^n\sum_{j=1}^p
    (\frac{\partial(D^{-1})}{\partial x_{ij}})^2
    ]^{1/2}$.
    Inequality \eqref{eq:lipschitz-D-inverse} shows that the gradient
    of the map $D^{-1}:\R^{n\times p}\to \R$ has Euclidean norm at most 
    $n^{-1/2}L_* D^{-1}$, that is,
    $[\sum_{i=1}^n\sum_{j=1}^p(\frac{\partial(D^{-1})}{\partial x_{ij}})^2
    ]^{1/2}
    \le
    n^{-1/2} L_* D^{-1}.$
    Hence using $\|\brho\|\le 1$ and $\|\br\|D^{-1}\le 1$, the third term
    in \eqref{eq:key_algebra-1} is bounded from above by
    $2\|\bX n^{-1/2}\|_{op} L_*$.
    In summary,
    for the last term on the left-hand side of \eqref{eq:new_cor_first},
    \begin{equation}
    \Big|
    2\sum_{i=1}^n\sum_{j=1}^p
    \rho_i\be_j^\top\bX^\top\frac{\partial \brho}{\partial x_{ij}}
    +
    2\df \|\brho\|^2
    \Big|
    \le
    2 \|\bX n^{-1/2}\|_{op}
    +
    2 L_* \|\bX n^{-1/2}\|_{op}
    \label{eq:negligible-term-cross}
    \end{equation}
    which satisfies
    $\E[\eqref{eq:negligible-term-cross}]\le \C(\gamma,\mu)$
    by \Cref{lemma:Davidson}.
    For the term
    $\sum_{j=1}^p(
\sum_{i=1}^n \frac{\partial \rho_i}{\partial x_{ij}} )^2$ in
the left-hand side of \eqref{eq:new_cor_first},
\begin{equation}
\frac{\trace[\bV] h_j}{D \sqrt n} +
\sum_{i=1}^n \frac{\partial \rho_i}{\partial x_{ij}}
= - \frac{ \hbpsi{}^\top\diag(\bpsi')\bX\hbA\be_j}{D \sqrt n}
+ \sum_{i=1}^n r_i \frac{\partial D^{-1}}{\partial x_{ij}} 
\label{eq:key_algebra-2}
\end{equation}
by \eqref{eq:chain-rule-r}, for any fixed $j\in[p]$.
For the first term on the right-hand side we have
$\sum_{j=1}^p
(\frac{1}{D\sqrt n}\hbpsi{}^\top\diag(\bpsi')\bX\hbA\be_j)^2=
\|\hbA{}^\top \bX^\top \diag(\bpsi')\brho\|^2
\le
\|\hbA\|_{op}^2 \|\bX\|_{op}^2$.
For the second term on the right-hand side of \eqref{eq:key_algebra-2}, by the Cauchy-Schwarz inequality
$\sum_{j=1}^p|\sum_{i=1}^n r_i \frac{\partial D^{-1}}{\partial x_{ij}}|^2
\le \|\br\|^2 n^{-1} L_*^2 D^{-2} \le n^{-1} L_*^2$
since the norm of the gradient of $D^{-1}$ satisfies
$[\sum_{i=1}^n\sum_{j=1}^p(\frac{\partial(D^{-1})}{\partial x_{ij}})^2
]^{1/2}
\le
n^{-1/2} L_* D^{-1}$ again thanks to \eqref{eq:lipschitz-D-inverse}.
Consequently, $\ba,\bb\in\R^p$ defined componentwise as
\begin{equation}
\label{eq:a_j-b_j}
\textstyle
a_j = - \tfrac{\trace[\bV]h_j}{D\sqrt n},
~
b_j=
\sum_{i=1}^n \frac{\partial \rho_i}{\partial x_{ij}}
\quad
\text{ satisfy }
\quad
\|\eqref{eq:key_algebra-2}\|
=
\|\bb-\ba\|\le
\|\hbA\|_{op}\|\bX\|_{op} +  n^{-1/2}L_*.
\end{equation}
For the difference of squares,
$|\|\bb\|^2 - \|\ba\|^2|
=
|\|\bb-\ba\|^2 + 2(\bb-\ba)^\top\ba|
\le 
\|\bb-\ba\|^2 + \|\bb-\ba\| 2 \|\ba\|.$
Next, $\|\ba\|^2 \le n^{-1} \trace[\bV]^2 \|\bh\|^2/D^2 \le n$
since $\|\bh\|\le D$ and $0\le\trace[\bV]\le n$
by \Cref{prop:jacobian-psd-htheta}. Thus
\begin{equation}
\Big|
\sum_{j=1}^p(
\sum_{i=1}^n \frac{\partial \rho_i}{\partial x_{ij}} )^2
-
\trace[\bV]^2 \frac{\|\bh\|^2}{nD^2}
\Big|
\le 
    \bigl(\|\hbA\|_{op}\|\bX\|_{op} + \frac{L_*}{\sqrt n}\bigr)^2
+
\bigl(\|\hbA\|_{op}\|\bX\|_{op} + \frac{L_*}{\sqrt n}\bigr)
2\sqrt n
\label{divergcence-square-to-bound}
\end{equation}
Using \Cref{lemma:Davidson}
and \eqref{eq:hbA-op}
again,
$\E[\eqref{divergcence-square-to-bound}]
\le \C(\gamma,\mu)$. An application of \eqref{eq:new_cor_first}
combined with the bounds in expectation obtained for
\eqref{eq:negligible-term-cross} and \eqref{divergcence-square-to-bound}
thus provides
$
\E\Big|
\|\bX^\top\brho\|^2
+(2\df - p)
\|\brho\|^2
-
\trace[\bV]^2 \|\bh\|^2 \frac{1}{D^2 n}
\Big|
\le 
\C(\gamma,\mu) \sqrt n
$
which is exactly \Cref{thm:main_robust} under \Cref{assum:g}(i).
We mention in passing that using the notation and the bound in
\eqref{eq:a_j-b_j},
the first term in \eqref{eq:new_cor_second} satisfies
\begin{equation}
\Big|
\sum_{j=1}^p
\brho^\top\bX\be_j\sum_{i=1}^n
\frac{\partial \rho_i}{\partial x_{ij}}
+ \frac{\trace[\bV]\brho^\top\bX\bh}{D\sqrt n}
\Big|
=\Big|
\brho^\top\bX(\bb-\ba)
\Big|
\le
\|\hbA\|_{op}\|\bX\|_{op}^2 + n^{-1/2}\|\bX\|_{op}^2L_*
\label{eq:note-in-passing}
\end{equation}
so that $\E[\eqref{eq:note-in-passing}]\le \C(\mu,\gamma)$ by \Cref{lemma:Davidson} and \eqref{eq:hbA-op}.
\end{proof}
\begin{proof}[Proof of \Cref{thm:main_robust} under \Cref{assum:g}(ii)]
    If $\mu_\rho>0$, by Proposition~\ref{prop:jacobian-psd-htheta} and with
    the notation of \Cref{lemma:ninja-variant}, the matrix
    $\bV=\diag(\bpsi') - \diag(\bpsi')\bX\hbA\bX^\top\diag(\bpsi')$
    is symmetric psd. Let $(\bu_1,...,\bu_{q})$ be an orthonormal basis
    of $\ker(\diag(\bpsi')\bX\hbA\bX^\top\diag(\bpsi'))^\perp$ 
    and note that $q\ge n-p$ since $\bX$ and the $n\times n$ matrix inside $\ker(\cdot)$
    have rank at most $p$. Since $\bV$ is psd,
    $\trace[\bV]\ge \sum_{i=1}^q \bu_i^\top \bV \bu_i =
    \sum_{i=1}^q \bu_i^\top \diag(\bpsi') \bu_i$. 
    Since $\psi'\ge\mu_\rho$ by \Cref{assum:g}(ii), we obtain
    $\trace[\bV]\ge \mu_\rho q \ge \mu_\rho(n-p)\ge n\mu_\rho(1-\gamma)$ as desired.

    The argument and notation are the same as in the previous proof.
    We apply \eqref{eq:new_cor_first} with only two notable differences.
    First,
    by \Cref{prop:lipschtiz2-gamma-less-one},
    $L_*$ is now random and can be chosen (enlarging $L_*$ if necessary) as 
    \begin{equation}
        \label{Lstar_ii}
L_* = \C(\mu_\rho) \max(1, \|n^{-1/2}\bX\|_{op})\big/ \min(1,\phi_{\min}(\tfrac 1 n \bX^\top\bX)).
    \end{equation}
    The Jacobian of $\brho(\bX)$ has operator
    norm at most $2L_* n^{-1/2}$ by \eqref{eq:lipschitz-br-D-inverse} and the gradient of $D^{-1}$ has
    Euclidean norm at most $L_* n^{-1/2}$ by
    \eqref{eq:lipschitz-D-inverse}.
    Second, we use the operator norm bound 
    $\|\hbA\|_{op} \le \frac{1}{n\mu_\rho} \phi_{\min}(\frac 1 n\bX^\top\bX)^{-1}$
    by \eqref{eq:hbA-op}.
    Inequality \eqref{eq:RHS_2_18} [upper bound on the quantity RHS in \eqref{eq:new_cor_first}], inequality \eqref{eq:negligible-term-cross} [upper bound on the negligible terms
    in \eqref{eq:key_algebra-1}] and
    inequality \eqref{divergcence-square-to-bound} are still valid,
    and these three upper bounds are, in expectation, smaller
    than $\C(\mu_\rho,\gamma) \sqrt n$ since by \Cref{lemma:Davidson,lemma:Edelman} and the Cauchy-Schwarz inequality
    we have
    $\E[\max(1, \|n^{-1/2}\bX\|_{op})^k\big/ \min(1,\phi_{\min}(\tfrac 1 n \bX^\top\bX))^{k'}]\le \C(\gamma,k,k')$ for any absolute constants
    $k,k'$; for our purpose we may take $k=4$, $k'=2$.
\end{proof}

The proofs under \Cref{assum:g}(iii.a) and (iii.b) are
more technical as the right-hand side of \Cref{big_cor},
\eqref{eq:negligible-term-cross} and \eqref{divergcence-square-to-bound}
can only be controlled in a high-probability event $\Omega$.
To overcome this problem, we use the argument detailed in the next section.
The formal proofs of \Cref{thm:main_robust}
under \Cref{assum:g}(iii.a) and (iii.b)
are provided in \Cref{sec:appendix_lasso,sec:appendix_huber_lasso_new}.

\subsection{Kirszbraun's theorem:
controlling derivatives outside high-probability events
}
\label{sec:kirszbraun}

Under \Cref{assum:g}(i) and (ii), the proof of \Cref{thm:main_robust}
leverages that for a fixed noise vector $\bep$, the function
$\bX\mapsto \brho(\bX)$ defined in \eqref{eq:bfeta-brho} satisfies
\begin{equation}
\|\brho(\bX)-\brho(\bX')\|
\le n^{-1/2} 2 L_* \|\bX - \bX'\|_F
\label{eq:lispchitzLstar_Discussion}
\end{equation}
for some deterministic $L_*=\C(\mu)$ under \Cref{assum:g}(i)
and a random but integrable $L_*$ given by \eqref{Lstar_ii}
under \Cref{assum:g}(ii).
For the Lasso and Huber Lasso in \Cref{assum:g}(iii),
we are only able to derive 
inequality \eqref{eq:lispchitzLstar_Discussion} for $\bX,\bX'\in U_\bep$
for some open set $U_{\bep}\subset\R^{n\times p}$ such that the
event $\Omega = \{ \bX\in U_\bep\}$ has $\P(\Omega)\to 1$.
We use the following variant of \Cref{big_cor} to prove \Cref{thm:main_robust}
in such situations where the derivatives of $\brho(\bX)$ cannot be controlled
in a small probability event $\bX\not\in U_\bep$.

\begin{corollary}
    \label{kirsz_cor}
    Let $L>0$ and $U\subset\R^{n\times p}$ be open.
    Assume that $\bX$ has iid $N(0,1)$ entries,
    that $\brho:\R^{n\times p}\to\R^n$ is weakly differentiable
    and that $\|\brho(\bX)\|\le 1$ and
    $\|\brho(\bX)-\brho(\bX')\| \le Ln^{-1/2} \|\bX-\bX'\|_F$
    for any two $\bX\in U,\bX'\in U$.
    Then for
    $\RHS=
    L^2
+
(1+L)
\sqrt{p+n} 
$ we have
    \begin{align}
        \E\Bigl[I\{\bX\in U\}\Big|
\|\bX^\top\brho\|^2
- p \|\brho\|^2
-
\sum_{j=1}^p\Big(
\sum_{i=1}^n \frac{\partial \rho_i}{\partial x_{ij}} \Big)^2
- 2\sum_{i=1}^n\sum_{j=1}^p
\rho_i\be_j^\top\bX^\top\frac{\partial \brho}{\partial x_{ij}}
\Big|\Bigr]
&\le 
\C \RHS,
\label{Kirszbraun_first}
\\
\E\Bigl[I\{\bX\in U\}\Big|
\sum_{j=1}^p
\brho^\top\bX\be_j\sum_{i=1}^n
\frac{\partial \rho_i}{\partial x_{ij}}
-
\sum_{j=1}^p\Big(
\sum_{i=1}^n \frac{\partial \rho_i}{\partial x_{ij}} \Big)^2
- \sum_{i=1}^n\sum_{j=1}^p
\rho_i\be_j^\top\bX^\top\frac{\partial \brho}{\partial x_{ij}}
\Big|
\Bigr]
&\le \C \RHS.
\label{Kirszbraun_second}
    \end{align}
\end{corollary}
\begin{proof}
    By Kirszbraun's theorem, there exists $\brhobar:\R^{n\times p}\to\R^n$ such that $\brhobar(\bX)=\brho(\bX)$ for $\bX\in U$ and
    $\brhobar$ is $n^{-1/2}L$-Lipschitz on the whole $\R^{n\times p}$.
    Applying \Cref{big_cor} to $\brhobar$, the right-hand sides
    of \eqref{eq:new_cor_first} and \eqref{eq:new_cor_second} for $\brhobar$
    are bounded
    from above by an absolute constant times
    $L^2 + \sqrt{p+n} + \E[p+\|\bX\|_{op}^2]^{1/2}L$
    thanks to $\sum_{i=1}^n\sum_{j=1}^p \|\frac{\partial\brhobar}{\partial x_{ij}}\|^2
    \le L^2$ since the Frobenius norm of the Jacobian of a Lipschitz map
    $\R^{np}\to\R^n$ is bounded by $n$ times the square of the Lipschitz constant. The left-hand side of \eqref{eq:new_cor_first} for $\brhobar$
    is bounded from below
    by the left-hand side of \eqref{Kirszbraun_first} since $I\{\bX\in U\}\le 1$, hence \eqref{Kirszbraun_first} for $\brho$ follows from \eqref{eq:new_cor_first} for $\brhobar$.
    Similarly, \eqref{Kirszbraun_second} for $\brho$
    follows from \eqref{eq:new_cor_second} for $\brhobar$.
\end{proof}

Consequently, as long as \eqref{eq:lispchitzLstar_Discussion}
and $\P(\bX\in U_\bep)\to 1$ hold, and the remainder
terms \eqref{divergcence-square-to-bound} and \eqref{eq:negligible-term-cross}
are negligible for $\bX\in U_\bep$, the same algebra as in
\Cref{sec:proof-sketch}
can be used to derive a version of \Cref{thm:main_robust} that holds
in the event $\Omega=\{\bX\in U_\bep\}$.
This approach is used in \Cref{sec:appendix_lasso,sec:appendix_huber_lasso_new} for the formal proof of \Cref{thm:main_robust} under \Cref{assum:g}(iii.a)
and (iii.b).
}

\subsection{Approximation of the multiplicative factors in $\hat R$
    \label{sec:divergence}
for arbitrary loss and penalty $(\rho,g)$}
For general penalty function, however, no closed form solution is available.
Still, it is possible to approximate the multiplicative factors appearing in
$\hat R$ using the following Monte Carlo scheme.
Since
$\trace[(\partial/\partial \by)\hbpsi]$
and $\df=\trace[(\partial/\partial\by)\bX\hbbeta]$
are the divergence of the vector fields $\by\mapsto \hbpsi$
and $\by \mapsto \bX\hbbeta$
respectively, we can use the following Monte Carlo approximation
of the divergence of a vector field, which was
suggested at least as early as
\cite{metzler2016denoising},
and for which accuracy guarantees are proved in \cite{bellec_zhang2018second_stein}.

Let $\bF:\R^n\to\R^n$ be a vector field, and let $\bz_k, k=1,...,m$
be iid standard normal random vector in $\R^n$.
Then for some small scale parameter $a>0$, we approximate the divergence of $\bF$
at a point $\by\in\R^n$ by
$$
\hat{~\dv~} \bF( \by ) = \frac 1 m \sum_{k=1}^m a^{-1} \bz_k^\top
\big[
    \bF(\by + a \bz_k) - \bF(\by)
\big].
$$
Computing the quantities $\bF(\by + a \bz_k)$ at the perturbed
response vector $\by + a \bz_k$ for $\bF(\by) = \bX\hbbeta$
or $\bF(\by)=\psi(\by-\bX\hbbeta)$ require the computation
of the $M$-estimator $\hbbeta(\by+a\bz_k,\bX)$ at the perturbed response.
If $\hbbeta(\by,\bX)$ has already been computed as a solution
to \eqref{M-estimator-rho} by an iterative algorithm, one
can use $\hbbeta(\by,\bX)$ as a starting point of the iterative
algorithm to compute $\hbbeta(\by+a\bz_k,\bX)$ efficiently,
since for small $a>0$ and by continuity,
$\hbbeta(\by,\bX)$ should provide a good initialization.
We refer to \cite{bellec_zhang2018second_stein} for an analysis
of the accuracy of this approximation.

\begin{figure}[ht]
    \centering
\includegraphics[width=4in]{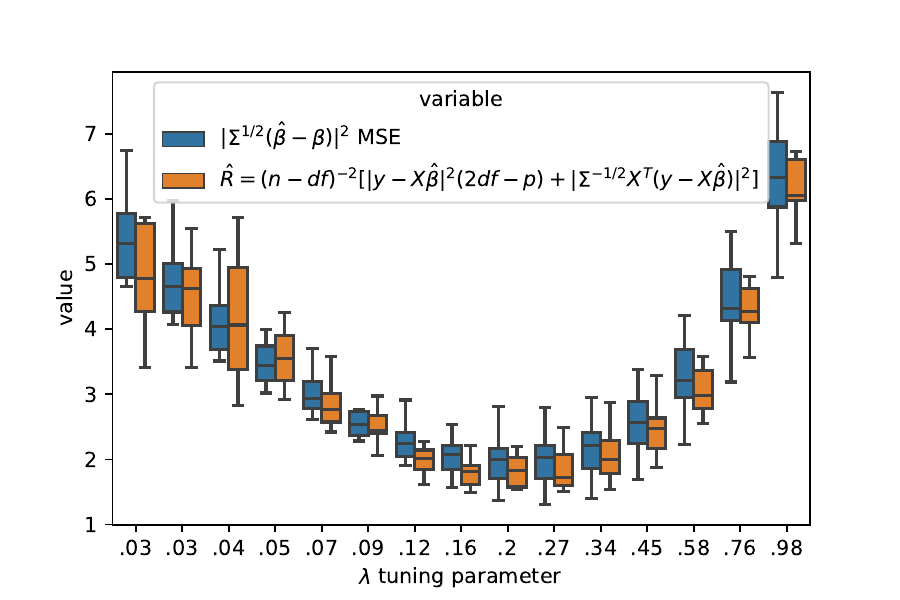}
\caption[Boxplots of $\hat R$ and its target $n=400,p=500$, nuclear normal penalty]{
    \small
    Boxplots of $\|\bSigma^{1/2}(\hbbeta-\bbeta)\|^2$
    and $\hat R$ over 10 repetitions, with $n=400, p=500$,
    the square loss and nuclear norm penalty
    $g(\bb)=\lambda \|\text{mat}(\bb)\|_{nuc}$
    for different values of the tuning parameter $\lambda$.
    The function $\text{mat}:\R^p \to \R^{20\times 25}$ maps
    $\R^p$ to matrices of size $20\times25$ so that the inverse
    map is the usual vectorization operator.
    For the true $\bbeta$, $\text{mat}(\bbeta)$ is rank 3.
    The Monte Carlo scheme of \Cref{sec:divergence} is used to
    compute $\df$ with $a=0.01$ and $m=100$.
    The full simulation setup is described in \Cref{sec:trace-norm-simu}.
}
\label{fig:nuclear-norm-hat-R}
\end{figure}

Hence, even in situations where no closed form expressions for the Jacobians 
$(\partial/\partial \by)\hbpsi$
and $(\partial/\partial\by)\bX\hbbeta$ are available,
the estimate $\hat R$ of the out-of-sample error of the $M$-estimator  $\hbbeta$
can be used by replacing the divergences 
$\trace[(\partial/\partial \by)\hbpsi]$
and $\df$
by their Monte Carlo approximations.
\Cref{fig:nuclear-norm-hat-R} illustrates the use of this Monte Carlo
scheme by showing boxplots $\hat R$
and its target $\|\bSigma^{1/2}(\hbbeta-\bbeta)\|^2$ over 10 repetitions
for the nuclear norm penalty over a range of tuning parameters.

Although this approximation scheme induces some computational
overhead as it requires computation of several $\hbbeta(\by+a\bz_k,\bX)$,
we stress that this approximation scheme is not needed
for the $\ell_1$ and Elastic-Net penalty since
explicit formulae are available (cf. Propositions~\ref{prop:specific-ENet} and~\eqref{prop:Huber-Lasso}). 
For these two commonly used penalty functions
the computational burden
of computing $\df$ and $\trace[(\partial/\partial)\hbpsi]$ 
is negligible.

\begin{figure}[!ht]
\includegraphics[width=5.8in]{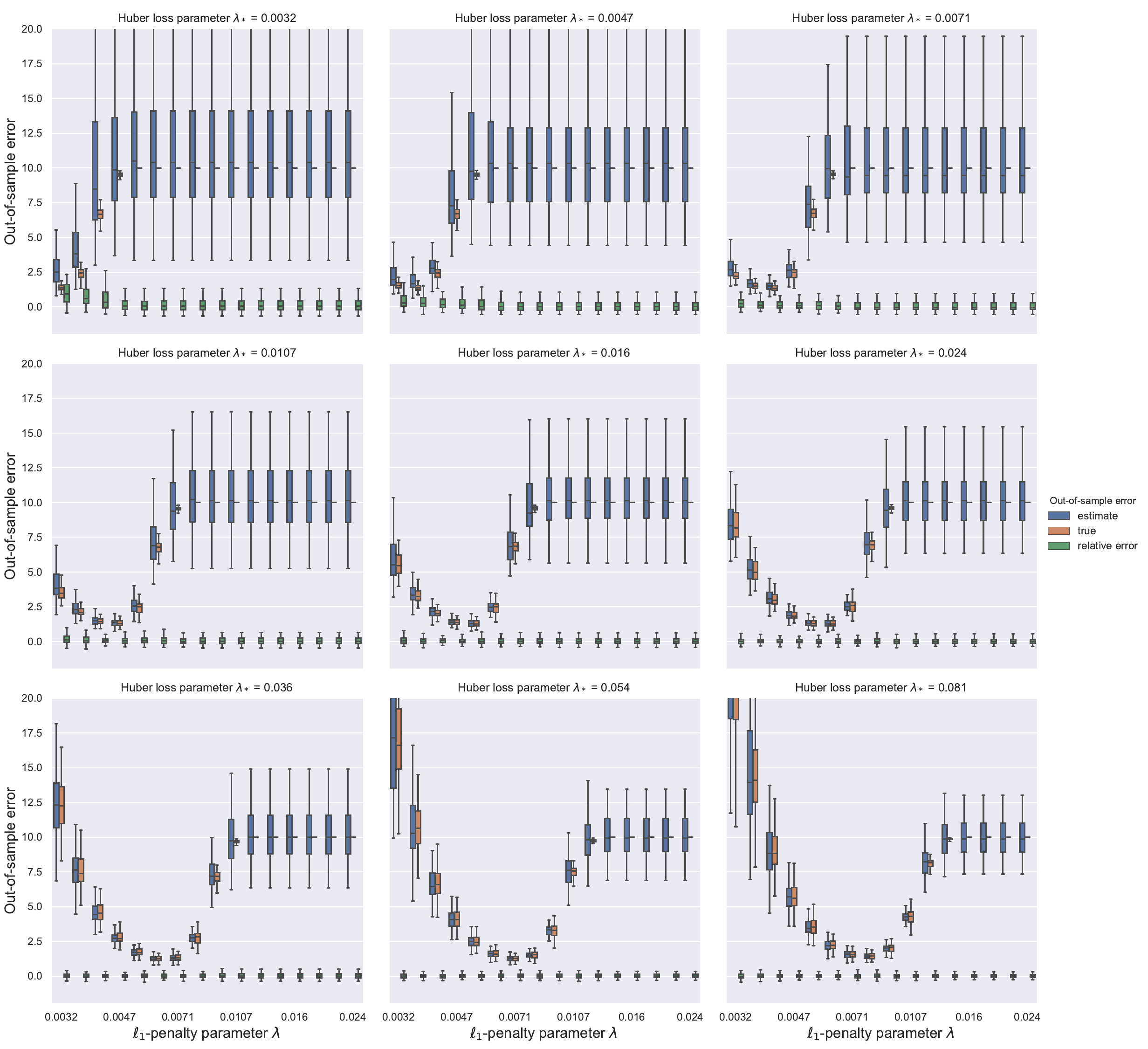}
\caption[Boxplots of $\hat R$ and its target $n=1001,p=1000$, Huber Lasso]{
    Boxplots over 100 repetitions of 
    the out-of-sample error $\|\bSigma^{1/2}(\hbbeta-\bbeta)\|^2$
    for the Huber Lasso  with parameters $(\lambda,\lambda_*)$,
    the estimate $\hat R$ in \eqref{eq:hat-R-Huber-Lasso}
    and the relative error
    $|1-\hat R \big/ \|\bSigma^{1/2}(\hbbeta-\bbeta)\|^2 |$.
    The heatmap below displays the average over the same
    100 repetitions of $\hat R$
    (Left)
    and $\|\bSigma^{1/2}(\hbbeta-\bbeta)\|^2$ (Right).
    The experiment is described in \Cref{sec:simu}.
}
\label{fig:boxplots}
\includegraphics[width=6.7in]{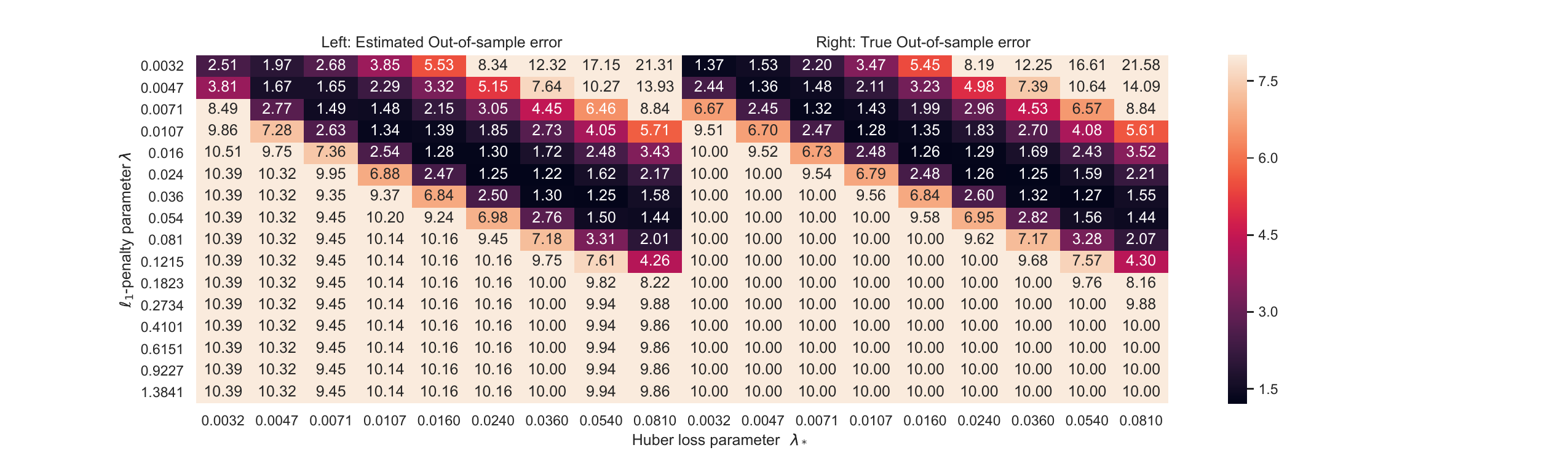}
\end{figure}

\subsection{Simulation study}
\label{sec:simu}

\subsubsection{Huber Lasso}
We illustrate the above result with a short simulation study.
For given tuning parameters $\lambda,\lambda_*>0$,
the $M$-estimator
$\hbbeta$ is the Huber Lasso estimator \eqref{M-estimator-rho}
with loss $\rho$ and penalty $g$ given in Proposition~\ref{prop:Huber-Lasso},
and the estimate $\hat R$ is given by \eqref{eq:hat-R-Huber-Lasso}.
We set $n=1001, p=1000$, $\bSigma=\bI_p$
and $\bbeta$ has 100 nonzero coefficients
all equal to $10 p^{-1/2}$.
The components of $\bep$ are iid with $t$-distribution 
with $2$ degrees-of-freedom (so that the variance of each component
does not exist).
Define the sets
$\Lambda = \{ 0.1 n^{-1/2}  (1.5)^k, k=0,...,15\}$ and
$\Lambda_* = \{ 0.1 n^{-1/2} (1.5)^k, k=0,...,8\}$.
For each $(\lambda,\lambda_*)$ in the discrete grid
$\Lambda \times \Lambda_*$,
the estimator 
$\hat R$, its target $\|\bSigma^{1/2}(\hbbeta - \bbeta)\|^2$
and the relative error $|1-\hat R/\|\bSigma^{1/2}(\hbbeta - \bbeta)\|^2|$
are reported, over 100 repetitions, in the boxplots in \Cref{fig:boxplots}.
\Cref{fig:boxplots} provides also a heatmap of the average of $\hat R$
and $\|\bSigma^{1/2}(\hbbeta - \bbeta)\|^2$ over the same 100 repetitions.

The plots show that the estimate $\hat R$
accurately estimates $\|\bSigma^{1/2}(\hbbeta - \bbeta)\|^2$ across the grid
$\Lambda \times \Lambda_*$,
at the exception of the lowest value of the Huber loss parameter
$\lambda_*$ coupled
with the two lowest values for the penalty parameter $\lambda$
as seen on the left of the top left boxplot in \Cref{fig:boxplots}.
These low values for $(\lambda,\lambda_*)$ lead to small
values for $(|\hat I|-|\hat S|)^2$ in the denominator of \eqref{eq:hat-R-Huber-Lasso}. This provides additional evidence that
$\hat R$ should not be trusted for low values 
of $\trace[(\partial/\partial\by)\hbpsi]^2/n^2$
(cf. \Cref{subsec:too_small}).
These inaccurate estimations for low values for $(\lambda,\lambda_*)$
do not contradict the theoretical results, as the proof
of \Cref{thm:main_robust} bounds from above
$(\trace[(\partial/\partial\by)\hbpsi]/n)^2
|\hat R - \|\bSigma^{1/2}\bh\|^2|
=(|\hat I|/n-|\hat S|/n)^2
|\hat R - \|\bSigma^{1/2}\bh\|^2|$ 
and upper bounds on
$|\hat R - \|\bSigma^{1/2}\bh\|^2|$
are not guaranteed by \Cref{thm:main_robust}
when $(\frac 1 n \trace \bV)^2=(\frac 1 n (|\hat I|-|\hat S|))^2$ is close to 0.
Furthermore, \Cref{fig:boxplots} suggests that the estimate $\hat R$
is accurate for $(\lambda,\lambda_*)$ smaller than 
the values $(\lambda,\lambda_*)$
required in \Cref{assum:g}(iii.b).
\pb{
This suggests
that the validity of $\hat R$ may hold for smaller tuning parameters
than those required 
by \Cref{assum:g}(iii.a) or (iii.b). The recent result from
\cite{celentano2020lasso} also confirms this: The theory for the Lasso
\cite{celentano2020lasso} holds for any constant tuning parameter $\lambda$, with no assumption
of the form $\lambda\ge\sigma\lambda_*$ as required
in \Cref{assum:g}(iii.a) for the proofs in the present paper.
}

\subsubsection{Square loss and nuclear norm penalty}
\label{sec:trace-norm-simu}

A second simulation study is provided with the square loss $\rho(u)=u^2/2$
and nuclear norm penalty.
With $n=400$, $p=500$,
a linear isomorphism $\text{mat}:\R^p\to\R^{20\times 25}$ is fixed
so that the inverse map is the usual vectorization operator.
The true $\bbeta$ is such that $\text{mat}(\bbeta)$ has iid $N(0,1)$
entries in the first three columns and zeros in the remaining columns
so that $\text{mat}(\bbeta)$ is rank 3.
The covariance matrix $\bSigma\in\R^p$ is defined as
$\bSigma = \bW/(5p)$ where $\bW$ is a Wishart matrix with identity covariance
and $5p$ degrees of freedom; $\bSigma$ is generated once and is the same
across the repetitions.
The noise $\bep$ has iid $N(0,2)$ entries.
For 10 repetitions, $(\by,\bX)$ are generated and M-estimators with
penalty proportional to the nuclear norm,
$g(\bb) = \lambda \|\text{mat}(\bb)\|_{nuc}$, are computed
for each value of $\lambda$
in $\{0.5 \cdot 1.3^k n^{-1/2}, k=0,1,2,...,14\}$.
For each $\lambda$, the estimate $\df$
is computed with the Monte Carlo scheme of \Cref{sec:divergence}
with $a=0.01$ and $m=100$ and used to construct the estimate $\hat R$
of the out-of-sample error.
The resulting boxplots, over 10 repetitions, are given in \Cref{fig:nuclear-norm-hat-R}.

\section{Square loss}
\label{sec:square_loss}

Throughout this section $\rho(u)=u^2/2$ in \eqref{M-estimator-rho}
so that $\hbbeta$ is the regularized least-squares
\begin{equation}
    \hbbeta(\by,\bX) = \argmin_{\bb\in\R^p}
    \Big(
        \|\bX \bb - \by\|^2/(2n)
        + g(\bb)
    \Big).
    \label{hbbeta-squared-loss}
\end{equation}
for some convex penalty $g:\R^p\to\R$.
Here $\psi(u)=\rho'(u)=u$, $\diag(\bpsi')=\bI_n$ so that
$\hbpsi=\psi(\by-\bX\hbbeta)$,
$\trace[(\partial/\partial\by)\hbpsi]$ are simply given by
$$\hbpsi = \by-\bX\hbbeta,
\qquad
\trace[\bV]=\trace[(\partial/\partial\by)\hbpsi] = n -\df,
$$
i.e., $\hbpsi$ is the vector of residuals.
In regression or sequence model with Gaussian noise,
the quantity $\df$ was introduced in
\cite{stein1981estimation} where Stein's Unbiased Estimate (SURE) was developed,
showing that
$\E[\|\bX(\hbbeta-\bbeta)\|^2] = \E[\SURE]$
 where 
$\SURE= \|\by-\bX\hbbeta\|^2 + 2\sigma^2\df - \sigma^2 n$
when $\bep\sim N({\bf 0},\sigma^2\bI_n)$
under mild differentiability and integrability assumptions.
Numerous works followed with the goal to characterize
the quantity $\df$ for estimators of interest, see for instance
\cite{zou2007,tibshirani2012,kato2009degrees,dossal2013degrees}
for the Lasso and the Elastic-Net,
\cite{vaiter2012degrees} for the Group-Lasso,
\cite{minami2020degrees} for Slope and submodular regularizers,
\cite{chen2019degrees} for projection estimators,
among others.
A surprise of the present paper
is that for general penalty functions, $\df$ is not only useful to estimate
the in-sample error in
$\E[\|\bX(\hbbeta-\bbeta)\|^2] = \E[\SURE]$, but also
the out-of-sample error
$\|\bSigma^{\frac12}(\hbbeta-\bbeta)\|_2^2$.
Furthermore, 
$\E[\|\bX(\hbbeta-\bbeta)\|^2] = \E[\SURE]$
 requires normality
of $\bep$ while the estimate $\hat R$ of the present paper rely
on the normality of $\bX$ but not that of $\bep$.

\subsection{Estimation of the noise level and generalization error}

The simple algebraic structure of the square loss allows us
to provide generic estimators of the noise level $\sigma^2$ and the generalization
error 
$\sigma^2 + \|\bSigma^{\frac12}\bh\|^2$,
assuming that the components of $\bep$ are iid
with mean zero and variance $\sigma^2$.
The quantity $\sigma^2 + \|\bSigma^{\frac12}\bh\|^2$ can be seen
as the generalization error, since 
$\sigma^2 + \|\bSigma^{\frac12}\bh\|^2 = \E[(\bx_{new}^\top\hbbeta - Y_{new})^2| (\by,\bX)]$
where $(\bx_{new}^\top,Y_{new})$ is independent of $(\bX,\by)$
with the same distribution as any row of $(\bX,\by)\in\R^{n\times (p+1)}$.

When the components of $\bep$ are assumed iid, mean-zero with variance $\sigma^2$, the convergence
$\|\bep\|^2/n\to \sigma^2$ holds almost surely
by the law of large numbers, and
$|\|\bep\|^2/n - \sigma^2| = O_\P(n^{-\frac12})$
by the central limit theorem if the fourth moment of the entries of $\bep$
is uniformly bounded as $n,p\to+\infty$.
We may thus consider the estimation
targets
$\sigma_*^2 \defas \|\bep\|^2/n$
and
$\sigma_*^2 + \|\bSigma^{\frac12}\bh\|^2 $
for the noise level and generalization
error, respectively.
Results for $\sigma^2$ and $\sigma^2+\|\bSigma^{\frac12}\bh\|^2$
can be deduced up to an extra additive error term of order
$|\|\bep\|^2/n - \sigma^2|$ which converges to 0 almost surely
and that satisfies
$|\|\bep\|^2/n - \sigma^2| = O_\P(n^{-\frac12})$
under the uniformly bounded fourth moment assumption
on the components of $\bep$. Define
\begin{equation}
\begin{aligned}
    \hat \tau^2 &= \big(n-\df\big)^{-2}  \|\by - \bX\hbbeta\|^2 n,
\\
    \hat R & =\big(n-\df\big)^{-2} \big\{ \|\by - \bX\hbbeta\|^2
        (2\df - p)
+ \|\bSigma^{-\frac12}\bX^\top(\by - \bX\hbbeta)\|^2\big\}
,
\\
        \hat\sigma^2 &=
\big(n - \df\big)^{-2}
\big\{
    \|\by - \bX\hbbeta\|^2(n-(2\df - p))
 - \|\bSigma^{-\frac12}\bX^\top(\by - \bX\hbbeta)\|^2
\big\}.
\end{aligned}
\label{tau_hat_R_hat_sigma_hat}
\end{equation}

\pb{
\begin{theorem}
    \label{thm:main-squared-loss}
    Let \Cref{assum:Sigma} be fulfilled. Set $\rho(u)=u^2/2$ (square loss) and assume that one of \Cref{assum:g}(i), (ii) or (iii.a) is
    fulfilled.
    Then almost surely
\begin{align}
    \big|
    \{\|\bSigma^{1/2}\bh\|^2 + \sigma_*^2\}
    -\hat\tau^2
    \big|
 &\le (1-\df/n)^{-2} 
\big(
        \|\by-\bX\hbbeta\|^2/n + \|\bSigma^{\frac12}\bh\|^2
    \big)
    \Rem_*
,
\label{eq:main-squared-loss}
\\
\big|
\|\bSigma^{1/2}\bh\|^2 - \hat R
\big|
&\le
(1-\df/n)^{-2} 
\big(
        \|\by-\bX\hbbeta\|^2/n + \|\bSigma^{\frac12}\bh\|^2
    \big)
\Rem_*
,
\nonumber
\\
\big|
\sigma_*^2 - \hat\sigma^2
\big|
&\le
(1-\df/n)^{-2} 
\big(
        \|\by-\bX\hbbeta\|^2/n + \|\bSigma^{\frac12}\bh\|^2
    \big)
\Rem_*
\nonumber
\end{align}
where $\Rem_*$ and $(1-\df/n)^{-1}$ in the right-hand side satisfy
\begin{enumerate}
    \item
$\E[\Rem_*]\le \C(\gamma,\mu)n^{-1/2}$
and
$(1-\df/n)^{-1} \le 
 1 + \frac{1}{n\mu}\|\bX\bSigma^{-1/2}\|_{op}^2
$ a.s.
under \Cref{assum:g}(i), so that
$\P[(1-\df/n)^{-1} \le 1 + (\sqrt \gamma + 1 + t)^2/\mu]
\ge 1 - \exp(-t^2/2)$.
\item
$\E[\Rem_*]\le \C(\gamma,\mu_\rho)n^{-1/2}$
and $(1-\df/n)^{-2}\le(1-\gamma)^{-2}$
a.s.
under \Cref{assum:g}(ii).
\item
    $\E[I\{\Omega\}\Rem_*]\le \C(\gamma,\varphi) n^{-1/2}$
    and
    $(1-\df/n)^{-2}
    \le \C(\gamma,\varphi)$
    in $\Omega$ for some event
    $\Omega$ of probability
    converging to one
    under \Cref{assum:g}(iii.a).
\end{enumerate}
\end{theorem}
\begin{proof}[Proof of \Cref{thm:main-squared-loss} under \Cref{assum:g}(i)]
    First, apply the change of variable \eqref{eq:variable-change}
    as in the proof  of \Cref{thm:main_robust}.
    We apply inequality \eqref{eq:new_cor_second} to $\brho$
    in \eqref{eq:br-bh-proof-overview}. The quantity RHS
    is bounded from above in \eqref{eq:RHS_2_18} while the three terms 
    in the left-hand side of \eqref{eq:new_cor_second}
    satisfy the approximations
    \eqref{eq:note-in-passing},
    \eqref{divergcence-square-to-bound} and
    \eqref{eq:negligible-term-cross}. This gives,
    by the triangle inequality,
    \begin{equation}
        \E\big|
        -\tfrac{\trace[\bV]}{D\sqrt n} \bh^\top\bX^\top\brho
        -\tfrac{\trace[\bV]^2}{nD^2}\|\bh\|^2 + \df \|\brho\|^2
        \big|
        \le
        \eqref{eq:RHS_2_18}
        +
        \E\bigl[
        \eqref{eq:negligible-term-cross}
        +
        \eqref{divergcence-square-to-bound}
        +
        \eqref{eq:note-in-passing}
        \bigr].
        \label{squareloss_RHS-sum}
    \end{equation}
    where each term on the right-hand side denotes the upper bound
    of the corresponding numbered equation in the previous section.
    For the square loss $\psi$ is the identity so that
    $\brho =(D\sqrt n)^{-1}(\bep - \bX\bh)$
    and $\trace[\bV]=n-\df$.
    Using these identities for the first term,
    the quantity inside the absolute value in the left-hand side
    of \eqref{squareloss_RHS-sum} equals
    $$
    n\|\brho\|^2
    -\tfrac{\trace[\bV]^2}{nD^2} \|\bh\|^2
    -  \tfrac{\trace[\bV]}{D\sqrt n}\bep^\top\brho
    =
    n\|\brho\|^2
    -\tfrac{\trace[\bV]^2}{nD^2} (\|\bh\|^2 + \tfrac1n\|\bep\|^2)
    + \Rem
    $$
    where $\Rem= 
    (D^2n)^{-1} \trace[\bV]
    (
    \trace[\bV] \tfrac1n \|\bep\|^2
    - \bep^\top(\bep - \bX\bh)
    )
    $.
    By the triangle inequality,
    \begin{equation}
        \E\big|
        n\|\brho\|^2
        -\tfrac{\trace[\bV]^2}{nD^2}(\|\bh\|^2 +\tfrac1n \|\bep\|^2)
        \big|
        \le
        \E|\Rem|
        +
        \eqref{eq:RHS_2_18}
        +
        \E\bigl[
        \eqref{eq:negligible-term-cross} 
        +
        \eqref{divergcence-square-to-bound}
        +
        \eqref{eq:note-in-passing}
        \bigr].
        \label{squareloss_RHS-sum_with_REM}
    \end{equation}
    \Cref{prop:eta_1} provides the bound $\E|\Rem|\le \C(\gamma) \sqrt n$
    while the other terms in the right-hand side have been shown
    to be smaller than $\C(\gamma,\mu)\sqrt n$ in the proof 
    of \Cref{thm:main_robust}.
    If we define $\Rem_*$ such that $n\Rem_*$ equals
    the random variable inside the expectation in the left-hand side of
    \eqref{squareloss_RHS-sum_with_REM},
    then the bound on the first line of \eqref{eq:main-squared-loss} is
    satisfied and
    $\E[\Rem_*]\le \C(\gamma,\mu)n^{-1/2}$.
    The desired bound on the second line in \eqref{eq:main-squared-loss}
    follows as a special case of   \Cref{thm:main_robust} 
    for the square loss, for a different $\Rem_*$ again satisfying
    $\E[\Rem_*]\le \C(\gamma,\mu)n^{-1/2}$.
    The bound on the third line
    in \eqref{eq:main-squared-loss} is obtained by taking the difference
    of the first two lines, where this third $\Rem_*$ is the sum of
    the $\Rem_*$ in the first line and the $\Rem_*$ in the second line.

    It remains to bound $(1-\df/n)^{-1}$.
    If $\bX$ is fixed and $\by,\tby$ are two response vectors
    with respective M-estimator $\hbbeta,\tbbeta$,
    multiplying by $(\hbbeta-\tbbeta)$ the KKT conditions
    $\bX^\top(\by-\bX\hbbeta) \in \partial g(\hbbeta)$
    and
    $\bX^\top(\tby-\bX\tbbeta) \in \partial g(\tbbeta)$
    and taking the difference, we find
    \begin{align}
        n (\partial g(\hbbeta) - \partial g(\tbbeta))^\top(\hbbeta-\tbbeta)
        + \|\bX(\hbbeta-\tbbeta)\|^2
        &\ni
        (\by-\tby)^\top\bX(\hbbeta-\tbbeta).
    \end{align}
    Since the infimum of
    $(\partial g(\hbbeta) - \partial g(\tbbeta))^\top(\hbbeta-\tbbeta)$
    is at least $\mu \|\bSigma^{\frac12}(\hbbeta-\tbbeta)\|^2$
    by strong convexity of $g$,
    this proves that
    $(n\mu\|\bX\bSigma^{-1/2}\|_{op}^{ \pb{ -2} } + 1) \|\bX(\hbbeta-\tbbeta)\|^2
    \le
    (\by-\tby)^\top\bX(\hbbeta-\tbbeta)$
    in $\Omega$.
    Thus $\by\mapsto \bX\hbbeta(\by,\bX)$
    is $L$-Lipschitz and the operator norm of $(\partial/\partial \pb{\by})\bX\hbbeta$ is bounded by $L$ for 
    $L=(n\mu\|\bX\bSigma^{-1/2}\|_{op}^{ \pb{ -2} } + 1)^{-1} < 1$.
    Thus $\df=\trace[(\partial/\partial \pb{ \by })\bX\hbbeta]\le nL$ and
    $(1-\df/n)^{-1} \le (1-L)^{-1}
    = 1 + \frac{1}{n\mu}\|\bX\bSigma^{-1/2}\|_{op}^2
    $. \Cref{lemma:Davidson} thus completes the proof for the tail bound
    on $(1-\df/n)^{-1}$.
\end{proof}

\begin{proof}[Proof of \Cref{thm:main-squared-loss} under \Cref{assum:g}(ii)]
    The algebra is the same, in particular
    \eqref{squareloss_RHS-sum}-\eqref{squareloss_RHS-sum_with_REM} are still
    valid. The bound $\E|\Rem|\le \C(\gamma)\sqrt n$ is valid by \Cref{prop:eta_1} (cf. \eqref{eq:bound-Rem-noise})
    while 
        \eqref{eq:RHS_2_18},
        \eqref{eq:note-in-passing},
        \eqref{divergcence-square-to-bound} and
        \eqref{eq:negligible-term-cross}
    are bounded from above by $\C(\mu_\rho,\gamma)\sqrt n$
    under \Cref{assum:g}(ii) as explained in the proof of 
    \Cref{thm:main_robust}.
    By \Cref{thm:main_robust} with $\mu_\rho=1$ for the square loss,
    $1-\df/n \ge 1-\gamma$ always holds.
\end{proof}

The proof under \Cref{assum:g}(iii.a) uses the argument
from \Cref{sec:kirszbraun}
and is provided in \Cref{sec:appendix_lasso}.}
The main message from \Cref{thm:main-squared-loss} 
is that $\hat R$ is consistent as an estimate of the out-of-sample
error $\|\bSigma^{\frac12}\bh\|^2$,
the estimate $\hat \tau^2$ is consistent for the generalization error
$\|\bSigma^{\frac12}\bh\|^2 + \sigma_*^2$,
and the estimate $\hat\sigma^2$
is consistent for the noise level $\sigma_*^2 =\|\bep\|^2/n$.

These estimates where known
for the unregularized Ordinary Least-Squares in \cite{leeb2008evaluation},
for the Lasso with square loss
in \cite{bayati2012lasso,bayati2013estimating,miolane2018distribution},
and for $\hbbeta=\mathbf{0}$ in \cite{dicker2014variance}.
Apart from these works and the specific Lasso penalty,
to our knowledge the above estimates for general convex penalty $g$
are new, so that \Cref{thm:main-squared-loss} considerably extends
the scope of applications of the estimates $\hat \tau^2, \hat R$ and $\hat\sigma^2$.
The estimate $\hat \tau^2$ of the generalization error is
of particular interest as it does not require the knowledge of $\bSigma$,
and can be used to choose the estimator with the smallest estimated
generalization error among a collection of convex regularized least-squares of
the form \eqref{hbbeta-squared-loss}. Since $\hat \tau^2$
estimates the risk for the actual sample size $n$, this provides a favorable
alternative to $K$-fold cross-validation which provides estimates
of the risk corresponding to the biased sample size $(1-1/K)n$
(cf. Figure 1 in \cite{rad2018scalable}).
In defense of cross-validation, which is known to successfully tune
parameters in practice for arbitrary data distribution,
the above estimates are valid 
when the rows of $\bX$ are iid $N(\mathbf{0},\bSigma)$
and it is unclear if the constancy of $\hat R,\hat\tau^2,\hat\sigma^2$
extends to non-Gaussian designs.
The following asymptotic
corollary of \Cref{thm:main-squared-loss} holds.

\begin{corollary}
    \label{corollary:square_loss}
    For some fixed value of $\gamma,\varphi>0,\mu\ge 0$,
    consider a sequence of regression problems and penalties
    with $n,p\to+\infty$ such  that for each $n,p$, the setting and assumptions of
    \Cref{thm:main-squared-loss} are fulfilled. Then
\begin{equation*}
\big|
 \big\{\|\bSigma^{\frac12}\bh\|^2 + \sigma_*^2\big\}-\hat \tau^2 \big|
 \quad
 +
 \quad
\big|\|\bSigma^{\frac12}\bh\|^2  -  \hat R \big|
 \quad
+
 \quad
 \big|\sigma_*^2-\hat\sigma^2\big| 
  \le  O_\P(n^{-\frac12}) \hat \tau^2.
\end{equation*}
Consequently, for the generalization error,
$\hat \tau^2/\{\|\bSigma^{\frac12}\bh\|^2 + \sigma_*^2\} \to^\P 1$
in probability.
\end{corollary}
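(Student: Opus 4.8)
The plan is to obtain the corollary directly from the three almost-sure inequalities of \Cref{thm:main-squared-loss} by cancelling the common factor $(1-\df/n)^2$ and then performing elementary probabilistic bookkeeping. Write $T\defas\|\bSigma^{\frac12}\bh\|^2+\sigma_*^2$. From the definitions of the estimators, $\hat\tau^2=(1-\df/n)^{-2}\|\hbpsi\|^2/n$, and $\hat R$ and $\hat\sigma^2$ are likewise $(1-\df/n)^{-2}$ times the bracketed expressions (divided by $n^2$) appearing in $\LHSB$ and $\LHSC$; hence $\LHSA=(1-\df/n)^2(T-\hat\tau^2)$, $\LHSB=(1-\df/n)^2(\|\bSigma^{\frac12}\bh\|^2-\hat R)$ and $\LHSC=(1-\df/n)^2(\sigma_*^2-\hat\sigma^2)$. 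Dividing the three bounds of \Cref{thm:main-squared-loss} by $(1-\df/n)^2$ and using the last sentence of that theorem, namely $0\le(1-\df/n)^{-1}\le\C$ on $\Omega$ (so $(1-\df/n)^2>0$ and $(1-\df/n)^{-2}\le\C^2$ on $\Omega$), produces, on $\Omega$, the single bound $\max\{|T-\hat\tau^2|,|\|\bSigma^{\frac12}\bh\|^2-\hat R|,|\sigma_*^2-\hat\sigma^2|\}\le\C^2 V^* n^{-\frac12}\bigl(\|\hbpsi\|^2/n+T\bigr)$.

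The next step is to trade $\|\hbpsi\|^2/n+T$ on the right for a multiple of $\hat\tau^2$. For the square loss $\trace[(\partial/\partial\by)\hbpsi]=n-\df$, which lies in $[0,n]$ by \Cref{prop:jacobian-psd-htheta}, so $(1-\df/n)^2\in[0,1]$ and therefore $\|\hbpsi\|^2/n=(1-\df/n)^2\hat\tau^2\le\hat\tau^2$. To control $T$ I would run a short self-bounding argument: the first coordinate of the bound above reads $|T-\hat\tau^2|\le\delta(\hat\tau^2+T)$ with $\delta\defas\C^2 V^* n^{-\frac12}$, and whenever $\delta\le\tfrac12$ this rearranges into $T\le 3\hat\tau^2$ and $|T-\hat\tau^2|\le 4\delta\hat\tau^2$. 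Substituting $\|\hbpsi\|^2/n+T\le 4\hat\tau^2$ back in shows that, on the event $E_n=\Omega\cap\{\delta\le\tfrac12\}$, each of the three errors is at most $4\C^2 V^* n^{-\frac12}\hat\tau^2$.

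Finally, \Cref{thm:main-squared-loss} gives $\E[I\{\Omega\}V^*]\le\C$, so Markov's inequality yields $I\{\Omega\}V^*=O_\P(1)$; hence $I\{\Omega\}\delta=O_\P(n^{-\frac12})\to^\P0$ and, together with $\P(\Omega)\to1$, $\P(E_n)\to1$. Since on $E_n$ the relative error $\max\{\dots\}/\hat\tau^2$ is bounded by $4\C^2 V^* n^{-\frac12}$ while $n^{\frac12}\,4\C^2 V^* I\{\Omega\}=O_\P(1)$, and since $\P(E_n)\to1$, the usual argument (splitting on $E_n$ versus $E_n^c$) upgrades this to $\max\{\dots\}\le O_\P(n^{-\frac12})\hat\tau^2$, which is the first display of the corollary. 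Its first coordinate gives $|T/\hat\tau^2-1|=O_\P(n^{-\frac12})\to^\P0$; since $T\ge\sigma_*^2>0$ almost surely, the continuous mapping theorem applied to $x\mapsto1/x$ near $1$ then yields $\hat\tau^2/\{\|\bSigma^{\frac12}\bh\|^2+\sigma_*^2\}\to^\P1$.

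The argument is essentially bookkeeping and uses only \Cref{thm:main-squared-loss} and \Cref{prop:jacobian-psd-htheta}; the one spot needing a little care — the nearest thing to an obstacle — is the self-bounding step that converts a bound written in terms of the unobservable quantity $T$ into one in terms of the estimator $\hat\tau^2$, together with the routine step of passing from ``$O_\P(1)$ on an event of probability tending to one'' to ``$O_\P(1)$.''
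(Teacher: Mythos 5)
Your proposal is correct and follows essentially the same route as the paper: divide the three bounds of Theorem~3.2 by $(1-\df/n)^2$ using the uniform bound on $(1-\df/n)^{-1}$ over $\Omega$, use $\|\hbpsi\|^2/n=(1-\df/n)^2\hat\tau^2\le\hat\tau^2$, and then rearrange self-referentially to replace the unobservable $\|\bSigma^{\frac12}\bh\|^2+\sigma_*^2$ on the right by a multiple of $\hat\tau^2$ before invoking Markov's inequality on $I\{\Omega\}V^*$. The only cosmetic difference is that the paper moves the $V^*n^{-1/2}|T-\hat\tau^2|$ term to the left-hand side to get a denominator $C_*^{-2}-|V^*|n^{-1/2}$, whereas you restrict to the event $\{\delta\le\tfrac12\}$ first; both yield the same $O_\P(n^{-1/2})$ relative error.
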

\begin{proof}
This is a direct application of \Cref{thm:main-squared-loss}:
Since $(1-\df/n)^{-1} = O_\P(1)$
and $\Rem_*= O_\P(n^{-1/2})$ in \eqref{eq:main-squared-loss},
the first line of \eqref{eq:main-squared-loss} gives
$|\tau_*^2 - \hat \tau^2|
\le O_\P(n^{-1/2})(\hat\tau^2+\tau_*^2)$ where
$\tau_*^2 = \|\bSigma^{1/2}\bh\|^2 + \sigma_*^2$.
This implies $|\hat \tau^2/\tau_*^2 - 1| = O_\P(n^{-1/2})$
so that $\hat \tau^2/\tau_*^2\to^\P 1$.
Consequently the RHS of each line in \eqref{eq:main-squared-loss}
is $O_\P(n^{-1/2})\hat\tau^2$ which provides the claim.
\end{proof}

\pb{
One consequence of \Cref{thm:main-squared-loss}
and \Cref{corollary:square_loss} is the relationship
\begin{equation}
\Big|\bigl(1-\frac{\df}{n}\bigr) - \frac{\|\by-\bX\hbbeta\|/\sqrt n }{(\sigma_*^2 + \|\bSigma^{1/2}\bh\|^2)^{1/2}}\Big|
\le
\frac{1}{1-\df/n}
    \Big|\bigl(1-\frac{\df}{n}\bigr)^2 - \frac{\|\by-\bX\hbbeta\|^2/n}{\sigma_*^2 + \|\bSigma^{1/2}\bh\|^2}\Big| = O_\P(n^{-1/2})
    \label{eq:useful-rate}
\end{equation}
using $|a-b|\le \frac1a |a^2-b^2|$ for any $a,b>0$ for the inequality.
In particular, if there exist deterministic constants $\tau,\zeta>0$ such
the residual norm $\|\by-\bX\hbbeta\|/\sqrt n$ and error
$\|\bSigma^{1/2}\bh\|$ converge respectively to $\zeta\tau$ and $\tau$ at
a rate $O_\P(n^{-c})$ for $c>0$ in the sense
\begin{equation}
\|\by-\bX\hbbeta\|/\sqrt n= \zeta\tau +  O_\P(n^{-c}),
\qquad
\|\bSigma^{1/2}\bh\| = \tau + O_\P(n^{-c}),
\label{eq:montanari-celentano-wei}
\end{equation}
then $1-\df/n = \zeta +O_\P(\max\{n^{-c}, n^{-1/2})$ by \eqref{eq:useful-rate}.
Results of the form \eqref{eq:montanari-celentano-wei} and the
constants $\tau,\zeta$ are typically
characterized by the fixed-point equations discussed around \eqref{eq:regime-gamma-prime}, see the recent works \cite{celentano2020lasso,loureiro2021capturing} and references therein. For the Lasso, \cite[Theorems 5 and 7]{celentano2020lasso} proves \eqref{eq:montanari-celentano-wei} for $c=1/4$ up to logarithmic factors,
and if \Cref{assum:g}(iii) additionally holds then
\Cref{corollary:square_loss} and \eqref{eq:useful-rate} provides
$1-\df/n = \zeta + O_\P(n^{-1/4})$. This improves upon the rate
$1-\df/n-\zeta =  O_\P(n^{-1/6})$ obtained in Theorem 8 of the same work.
The argument used in \citep{miolane2018distribution,celentano2020lasso}
to connect $1-\df/n$ to the
fixed-point solutions $(\tau,\zeta)$ relies on relating $\df$  to
the law of the empirical distribution of the 
subgradient $\bX^\top(\by-\bX\hbbeta)$.
This relationship between $\df$ and the empirical distribution of 
the subgradient is specific to the $\ell_1$ penalty of the Lasso, and, as far as we
are aware, this technique does not extend to M-estimators \eqref{hbbeta-squared-loss}
other than $\ell_1$-penalized ones.
\Cref{corollary:square_loss} and \eqref{eq:useful-rate} show that
the connection between $1-\df/n$ and the ratio
$\|\by-\bX\hbbeta\|^2/(n(\sigma_*^2+\|\bSigma^{1/2}\bh\|^2))$
holds beyond $\ell_1$-penalized estimates.
}

We conclude
by supplementing the simulation setup in \Cref{sec:trace-norm-simu}
with the estimates $\hat\sigma^2$ and $\hat\tau^2$.
Boxplots of these estimates and their targets are provided
in \Cref{fig:nuclear-norm-2-sigma-hat}.
The quantity of approximation deteriorates for the smallest
tuning parameters, which can be explained by the multiplicative
factor $1-\df/n$ being close to 0.
An interesting phenomenon is visible regarding the empirical
variance of the estimate $\hat\sigma^2$: the smallest variances
are obtained for the tuning parameters with the smallest out-of-sample error.
Our theoretical results do not explain this observation;
further investigation of this phenomenon is left for future work.

\begin{figure}[tp]
    \centering
    \includegraphics[width=2.7in]{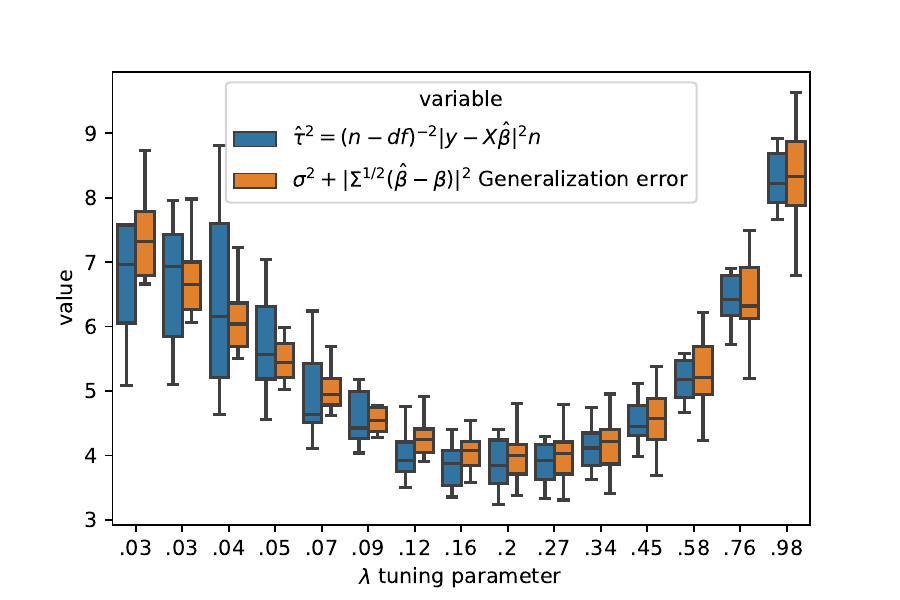}
    \includegraphics[width=2.7in]{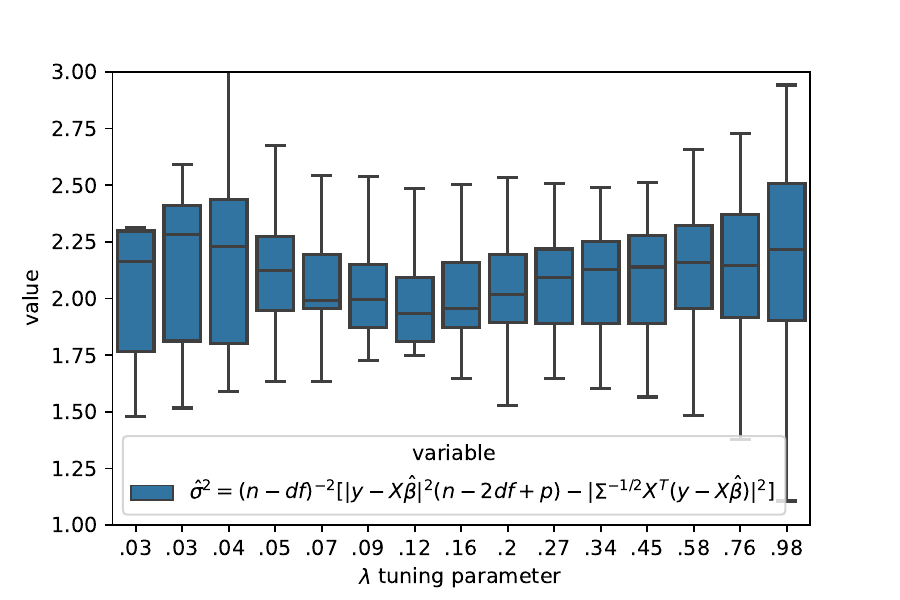}
\caption[Boxplots of $\hat \sigma^2,\hat\tau^2$ and their target $n=400,p=500$, nuclear normal penalty]{
        \label{fig:nuclear-norm-2-sigma-hat}
        Estimators $\hat\sigma^2,\hat\tau^2$
        and their targets for the nuclear norm penalty
        simulation described in \Cref{sec:trace-norm-simu}.
        The distribution of the entries of the noies $\bep$
        are iid $N(0,2)$.
    }
\end{figure}




\newcommand{\dX}{\dot{\bX}}
\newcommand{\dy}{\dot{\by}}
\section{Derivatives of $M$-estimators}
\label{sec:derivatives-6}

\subsection{Lipschitz properties}
\label{sec:lipschitz-properties}
\pb{Throughout the paper and the following propositions, 
the penalty $g:\R^p\to\R\cup\{+\infty\}$ and loss function
$\rho:\R\to\R$ are assumed convex.}

\begin{proposition}
    \label{prop:hbG-Lipschitz}
    Let $\rho$ be a loss function such that $\psi$ is $L$-Lipschitz,
    where $\psi=\rho'$. Then for any fixed design matrix $\bX\in\R^{n\times p}$,
    the mapping
    $\by\mapsto  \psi(\by-\bX\hbbeta)$
    is $L$-Lipschitz.
\end{proposition}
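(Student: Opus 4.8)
The plan is to derive the bound directly from the first-order optimality conditions for \eqref{M-estimator-rho}, using only monotonicity of the subdifferential of the convex penalty $g$ together with the fact that $\psi=\rho'$ is nondecreasing (as $\rho$ is convex) and $L$-Lipschitz. Fix $\bX$, pick two response vectors $\by_1,\by_2$, and let $\hbbeta_k$ be a minimizer in \eqref{M-estimator-rho} for $\by_k$. Write $\bu_k=\by_k-\bX\hbbeta_k$ for the residual vectors and $\hbpsi_k=\psi(\bu_k)$. The stationarity condition reads $\tfrac1n\bX^\top\psi(\by_k-\bX\hbbeta_k)\in\partial g(\hbbeta_k)$, i.e. $\tfrac1n\bX^\top\hbpsi_k\in\partial g(\hbbeta_k)$ for $k=1,2$.

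First I would invoke monotonicity of $\partial g$: $(\hbbeta_1-\hbbeta_2)^\top\bigl(\tfrac1n\bX^\top\hbpsi_1-\tfrac1n\bX^\top\hbpsi_2\bigr)\ge 0$, that is $(\bX\hbbeta_1-\bX\hbbeta_2)^\top(\hbpsi_1-\hbpsi_2)\ge 0$. Substituting $\bX\hbbeta_k=\by_k-\bu_k$ and rearranging gives
\[
(\by_1-\by_2)^\top(\hbpsi_1-\hbpsi_2)\ \ge\ (\bu_1-\bu_2)^\top\bigl(\psi(\bu_1)-\psi(\bu_2)\bigr).
\]
Next I would bound the right-hand side coordinatewise: for real numbers $a,b$, monotonicity of $\psi$ forces $a-b$ and $\psi(a)-\psi(b)$ to have the same sign, while $L$-Lipschitz continuity gives $|a-b|\ge L^{-1}|\psi(a)-\psi(b)|$; multiplying these yields $(a-b)(\psi(a)-\psi(b))\ge L^{-1}(\psi(a)-\psi(b))^2$. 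Summing over the $n$ coordinates gives $(\bu_1-\bu_2)^\top(\psi(\bu_1)-\psi(\bu_2))\ge L^{-1}\|\hbpsi_1-\hbpsi_2\|^2$, hence $(\by_1-\by_2)^\top(\hbpsi_1-\hbpsi_2)\ge L^{-1}\|\hbpsi_1-\hbpsi_2\|^2$. Applying Cauchy--Schwarz to the left-hand side, $\|\by_1-\by_2\|\,\|\hbpsi_1-\hbpsi_2\|\ge L^{-1}\|\hbpsi_1-\hbpsi_2\|^2$, and dividing by $\|\hbpsi_1-\hbpsi_2\|$ (the case $\hbpsi_1=\hbpsi_2$ being trivial) gives $\|\hbpsi_1-\hbpsi_2\|\le L\|\by_1-\by_2\|$, which is exactly the claimed Lipschitz bound. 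As a byproduct, taking $\by_1=\by_2$ shows $\psi(\by-\bX\hbbeta)$ is the same for every minimizer, so the map is well defined even when $\hbbeta$ is not unique.

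The argument is essentially routine; the only point needing a little care is justifying the stationarity condition when $g$ is merely convex (not differentiable) and $\rho$ is merely convex. This follows from Fermat's rule together with the Moreau--Rockafellar subdifferential sum rule, which applies because the loss term $\bb\mapsto\tfrac1n\sum_i\rho(y_{k,i}-\bx_i^\top\bb)$ is finite and continuous on all of $\R^p$; concretely $0\in-\tfrac1n\bX^\top\psi(\by_k-\bX\hbbeta_k)+\partial g(\hbbeta_k)$. No assumption on the distribution or on the rank of $\bX$ is used, so the statement holds for all $p,n$, including $p>n$.
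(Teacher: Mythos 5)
Your proof is correct and follows essentially the same route as the paper's: differencing the KKT conditions, invoking monotonicity of $\partial g$ to reduce to the residual term, lower-bounding it coordinatewise via $(u-v)(\psi(u)-\psi(v))\ge L^{-1}(\psi(u)-\psi(v))^2$, and finishing with Cauchy--Schwarz. The observation that $\psi(\by-\bX\hbbeta)$ is independent of the choice of minimizer is also made in the paper (Proposition~\ref{prop:constant-bv-e_j}(iii)), so nothing is missing.
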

\begin{proof}
    let $\by,\tby\in\R^n$
    be two response vectors, $\hbbeta=\hbbeta(\by,\bX),\tbbeta=(\tby,\bX)$
    and $\bpsi=\psi(\by-\bX\hbbeta)$, $\tbpsi = \psi(\tby-\bX\tbbeta)$.
    The KKT conditions read
    $\bX^\top\bpsi \in n \partial g(\hbbeta)$ and
    $\bX^\top\tbpsi \in n\partial g(\tbbeta)$
    where $\partial g(\hbbeta)$ denotes the subdifferential
    of $g$ at $\hbbeta$.
    Multiplying by $\hbbeta-\tbbeta$ and taking the difference
    of the two KKT conditions above,
    we find
    \begin{align}
        \nonumber
       &n(\partial g(\hbbeta)- \partial g(\tbbeta))^\top(\hbbeta-\tbbeta) 
       +
       [\{\by - \bX\hbbeta\} -\{\tby - \bX\tbbeta\} ]^\top(\bpsi - \tbpsi)
       \\& \ni
       [\by - \tby]^\top(\bpsi -\tbpsi)
       =
       [\bX(\hbbeta-\tbbeta)]^\top(\bpsi-\tbpsi)
       + 
       [\{\by - \bX\hbbeta\} -\{\tby - \bX\tbbeta\} ]^\top(\bpsi - \tbpsi)
       .
       \label{eq:intermediate-KKT-prop5.1}
    \end{align}
    By the monotonicity of the subdifferential,
    $(\partial g(\hbbeta)- \partial g(\tbbeta))^\top(\hbbeta-\tbbeta)
    \subset [0,\infty)$.
    We now lower bound the second term in the first line for each term
    indexed by $i=1,...,n$.
    Since
    $\psi:\R\to\R$ is nondecreasing and $L$-Lipschitz,
    $\psi(u) - \psi(v) \le L (u-v)$ holds for any $u>v$,
    as well as 
    $(\psi(u) - \psi(v))^2 \le L (u-v) (\psi(u) - \psi(v))$
    since $\psi(u) - \psi(v) \ge 0$ by monotonicity.
    Applying this inequality for each $i$ to
    $u=y_i-\bx_i^\top\hbbeta$ and $v=\tilde y_i - \bx_i^\top\tbbeta$, we 
    obtain
    \begin{align*}
        L^{-1} \|\bpsi - \tbpsi\|^2
    &\le[\{\by - \bX\hbbeta\} -\{\tby - \bX\tbbeta\} ]^\top(\bpsi - \tbpsi)
       \le [\by - \tby]^\top(\bpsi -\tbpsi).
    \end{align*}
    The Cauchy-Schwarz inequality completes the proof.
\end{proof}

Proposition~\ref{prop:hbG-Lipschitz} generalizes the result of \cite{bellec2016bounds}
to general loss functions.
The following proposition uses a variant of \eqref{eq:intermediate-KKT-prop5.1}
to derive Lipschitz properties with respect to $(\by,\bX)$.

\begin{proposition}
    \label{prop:constant-bv-e_j}
    Assume that $\psi$ is $L$-Lipschitz. Let $(\by,\bX)\in\R^n \times \R^{n\times p}$ and $(\tby,\tbX)\in\R^n \times \R^{n\times p}$ be fixed. 
    Let $\hbbeta=\hbbeta(\by,\bX)$ be the estimator
    in \eqref{M-estimator-rho} with observed data $(\by,\bX)$
    and let
    $
    \tbbeta
    =\argmin_{\bb\in\R^p}
    \frac 1 n 
    \sum_{i=1}^n
    \rho(\tilde y_i - \be_i^\top\tbX\bb)
    + g(\bb)
    $,
i.e., the same $M$-estimator as \eqref{M-estimator-rho}
with the data $(\by,\bX)$ replaced by $(\tby,\tbX)$.
Set $\bpsi = \psi(\by-\bX\hbbeta)$ as well as
$\tbpsi = \psi(\tby - \tbX\tbbeta)$.
Then
\begin{equation}
    \label{lipschitz-fundamental-inequality}
    \begin{split}
    & n \mu \|\bSigma^{\frac12}(\hbbeta-\tbbeta)\|^2 +  \max\big(
    L^{-1} \|\tbpsi - \bpsi\|^2,
    ~
    \mu_\rho \|\by-\bX\hbbeta - \{\tby-\tbX\tbbeta\}\|^2
    \big)
    \\&\le
    (\tbbeta -\hbbeta)^\top(\tbX-\bX)^\top\bpsi
    +
    (\tby + (\bX-\tbX)\hbbeta -\by)^\top(\tbpsi - \bpsi).
    \end{split}
\end{equation}
Consequently  
\begin{enumerate}
    \item 
The map $(\by,\bX)\mapsto (\hbbeta(\by,\bX),\hbpsi(\by,\bX))$
is Lipschitz on every compact subset of $\R^n\times \R^{n\times p}$ if $\mu>0$
as in \Cref{assum:g}(i).
\item
The map $(\by,\bX)\mapsto (\hbbeta(\by,\bX),\hbpsi(\by,\bX))$
is Lipschitz on every compact subset of
$\{(\by,\bX)\in\R^n\times \R^{n\times p}: \phi_{\min}(\bSigma^{-\frac12}\bX^\top\bX\bSigma^{-\frac12})>0 \}$
if $\mu_\rho>0$ as in \Cref{assum:g}(ii).
    \item
        If $\tbX=\bX$ and $\tby=\by$, we must have $\bpsi=\tbpsi$.
        This means that if $\hbbeta$ and $\tbbeta$ are two distinct
        solutions of the optimization problem \eqref{M-estimator-rho},
        then $\psi(\by-\bX\hbbeta)=\psi(\by-\bX\tbbeta)$ must hold.
\end{enumerate}
\end{proposition}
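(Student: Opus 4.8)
The plan is to derive the master inequality \eqref{lipschitz-fundamental-inequality} from the KKT conditions of the two optimization problems and then read off the three consequences. Writing the gradient of the data-fitting part of the objective as $\bb\mapsto-\tfrac1n\bX^\top\psi(\by-\bX\bb)$, optimality of $\hbbeta$ and of $\tbbeta$ give $\bX^\top\bpsi\in n\,\partial g(\hbbeta)$ and $\tbX^\top\tbpsi\in n\,\partial g(\tbbeta)$. Pairing the difference of these inclusions with $\hbbeta-\tbbeta$ and using monotonicity of $\partial g$ — upgraded to $\mu$-strong monotonicity with respect to $\bSigma$ under \Cref{assum:g}(i), with the convention $\mu=0$ when $g$ is merely convex — yields $n\mu\|\bSigma^{\frac12}(\hbbeta-\tbbeta)\|^2\le(\bX^\top\bpsi-\tbX^\top\tbpsi)^\top(\hbbeta-\tbbeta)$. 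I then add $(\bv-\tbv)^\top(\bpsi-\tbpsi)$ to both sides, where $\bv=\by-\bX\hbbeta$ and $\tbv=\tby-\tbX\tbbeta$ are the two residual vectors; the coordinatewise inequalities for the nondecreasing $L$-Lipschitz map $\psi$ used in the proof of Proposition~\ref{prop:hbG-Lipschitz}, namely $L^{-1}\|\bpsi-\tbpsi\|^2\le(\bv-\tbv)^\top(\bpsi-\tbpsi)$, together with $\mu_\rho\|\bv-\tbv\|^2\le(\bv-\tbv)^\top(\bpsi-\tbpsi)$ valid under \Cref{assum:g}(ii), let me replace the added left-hand term by the maximum appearing in \eqref{lipschitz-fundamental-inequality}. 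It remains to verify that the resulting right-hand side $(\bX^\top\bpsi-\tbX^\top\tbpsi)^\top(\hbbeta-\tbbeta)+(\bv-\tbv)^\top(\bpsi-\tbpsi)$ equals the expression in \eqref{lipschitz-fundamental-inequality}; this is a purely algebraic identity, obtained by writing $\tby+(\bX-\tbX)\hbbeta-\by=(\tbv-\bv)+\tbX(\tbbeta-\hbbeta)$ and collecting the terms proportional to $\tbbeta-\hbbeta$.

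For item 3 I would set $\tbX=\bX$ and $\tby=\by$ in \eqref{lipschitz-fundamental-inequality}. Both summands on the right vanish identically, so $L^{-1}\|\bpsi-\tbpsi\|^2\le0$ and hence $\psi(\by-\bX\hbbeta)=\psi(\by-\bX\tbbeta)$; since this used only monotonicity and Lipschitzness of $\psi$, it applies to any two minimizers of \eqref{M-estimator-rho} regardless of strong convexity.

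For items 1 and 2 the task is to convert \eqref{lipschitz-fundamental-inequality} into a genuine Lipschitz estimate on an arbitrary compact subset $K$ of the relevant domain. First I would show $\sup_{(\by,\bX)\in K}\big(\|\hbbeta(\by,\bX)\|+\|\bpsi(\by,\bX)\|\big)<\infty$: comparing the objective at $\hbbeta$ with its value at $\bb=\mathbf{0}$, convexity of $\rho$ gives an affine-in-$\bb$ lower bound, while $\mu$-strong convexity of $g$ (for item 1), respectively $\mu_\rho$-strong convexity of $\rho$ which makes $\bb\mapsto\tfrac1n\sum_i\rho(y_i-\bx_i^\top\bb)$ coercive on a compact subset of $\{\phi_{\min}(\bX^\top\bX)>0\}$ (for item 2), supplies the missing quadratic growth; hence the objective is coercive in $\bb$ uniformly over $K$, which bounds $\|\hbbeta\|$, and then $\|\bpsi\|\le\sqrt n\,|\psi(0)|+L\|\by-\bX\hbbeta\|$ is bounded as well. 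With these bounds, Cauchy–Schwarz on the right of \eqref{lipschitz-fundamental-inequality} gives, for item 1, $n\mu\|\bSigma^{\frac12}(\hbbeta-\tbbeta)\|^2+L^{-1}\|\bpsi-\tbpsi\|^2\le C_K\big(\|\bX-\tbX\|_{op}+\|\by-\tby\|\big)\big(\|\hbbeta-\tbbeta\|+\|\bpsi-\tbpsi\|\big)$, and Young's inequality absorbs the last factor into the left-hand side (invertibility of $\bSigma$ is used to pass between $\|\hbbeta-\tbbeta\|$ and $\|\bSigma^{\frac12}(\hbbeta-\tbbeta)\|$), proving that $(\hbbeta,\hbpsi)$ is Lipschitz on $K$. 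For item 2 the left-hand side of \eqref{lipschitz-fundamental-inequality} controls $\|\bpsi-\tbpsi\|$ and $\|\bv-\tbv\|$ but not $\|\hbbeta-\tbbeta\|$ directly; I recover the latter from $\tbX(\tbbeta-\hbbeta)=(\bv-\tbv)-(\by-\tby)-(\tbX-\bX)\hbbeta$ and the fact that $\phi_{\min}(\bX^\top\bX)$ is bounded below on $K$, and then apply the same absorption.

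The step I expect to be the main obstacle is this last absorption, together with the accompanying coercivity bound: one must track which norms enter with which powers so that Young's inequality genuinely closes the estimate rather than producing only a Hölder-type bound, and one must argue that $\hbbeta$ and $\bpsi$ remain bounded on compact subsets of the relevant domain. By contrast, the verification of \eqref{lipschitz-fundamental-inequality} and the derivation of item 3 are routine once the KKT manipulation above is in place.
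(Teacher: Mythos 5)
Your proposal is correct and follows essentially the same route as the paper: the KKT inclusions paired with $\hbbeta-\tbbeta$, monotonicity (upgraded to $\mu$-strong monotonicity of $\partial g$ and to the $L^{-1}$ and $\mu_\rho$ coordinatewise bounds for $\psi$), and the same algebraic rearrangement of the right-hand side, with item 3 read off by setting $(\tby,\tbX)=(\by,\bX)$. The only cosmetic differences are in items 1--2: the paper obtains boundedness of $(\hbbeta,\bpsi)$ on a compact by applying the derived inequality against a fixed reference point rather than by a coercivity argument, and it closes the estimate by dividing by $(\|\bSigma^{\frac12}(\hbbeta-\tbbeta)\|^2+\|\bpsi-\tbpsi\|^2/n)^{\frac12}$ instead of invoking Young's inequality --- both devices are equivalent here since the left side is quadratic and the right side linear in the differences.
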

\begin{proof}
    The KKT conditions for $\hbbeta$ and $\tbbeta$ read
$\tbX{}^\top\tbpsi \in n \partial g(\tbbeta)$ and
$\bX^\top\bpsi \in n\partial g(\hbbeta).$
If $D_g = (\hbbeta - \tbbeta)^\top(\partial g(\hbbeta) - \partial g(\tbbeta))$
then 
\begin{align}
    \nonumber
    &nD_g
      + (\tby - \tbX\tbbeta - \by + \bX\hbbeta)^\top
      (\tbpsi - \bpsi)
      \\&\ni (\tbbeta- \hbbeta)^\top(\tbX{}^\top \tbpsi - \bX^\top\bpsi)
      + (\tby - \tbX\tbbeta - \by + \bX\hbbeta)^\top
      (\tbpsi - \bpsi)
      \label{D_g-fundamental-ineq}
    \\&=
    (\tbbeta -\hbbeta)^\top(\tbX-\bX)^\top\bpsi
    +
    (\tby + (\bX-\tbX)\hbbeta -\by)^\top(\tbpsi - \bpsi).
    \nonumber
\end{align}
Since for any reals $u>s$ inequality
$(u-s)(\psi(u)-\psi(s)) \ge L^{-1} (\psi(u) - \psi(s))^2$ holds
when $\psi$ is $L$-Lipschitz and non-decreasing,
the first line is bounded from below by
$(n \inf D_g + L^{-1} \|\bpsi - \tbpsi\|^2)$.
If \Cref{assum:g}(ii) is satisfied for some $\mu_\rho>0$,
we also have $(u-s)(\psi(u)-\psi(s)) \ge \mu_\rho (u - s)^2$
so that the first line is bounded from below by
$\mu_\rho \|\by-\bX\hbbeta - \{\tby-\tbX\tbbeta\}\|^2$.
We also have $\inf D_g\ge \mu \|\bSigma^{\frac12}(\tbbeta-\hbbeta)\|^2$ by monotonicity
of the subdifferential and strong convexity of $g$
with respect to $\bSigma$.
This proves \eqref{lipschitz-fundamental-inequality}.

For (i), by bounding from above the right hand side of 
\eqref{lipschitz-fundamental-inequality} we find
\begin{align*}
    \min(\mu, L^{-1})(\|\bSigma^{\frac12}(\hbbeta-\tbbeta)\|^2+\|\bpsi-\tbpsi\|^2/n)^{\frac12}
\le
n^{-\frac12}\|\by-\tby\|
+
n^{-\frac12}\|(\bX-\tbX)\bSigma^{-\frac12}\|_{op}(n^{-\frac12}\|\bpsi\| + \|\bSigma^{1/2}\hbbeta\|).
\end{align*}
By taking a fixed $\bX$, e.g. $\bX=\mathbf{0}_{n\times p}$,
this implies that the supremum
$S(K) = \sup_{(\tby,\tbX)\in K}(\|\bSigma^{1/2}\smash{\tbbeta}\|+n^{-\frac12}\|\smash{\tbpsi}\|)$ is finite
for every compact $K$. If $(\by,\bX),(\smash{\tby},\smash{\tbX})\in K$
the right hand side is bounded from above
by 
$n^{-\frac12}\|\by-\smash{\tby}\|
+
n^{-\frac12}\|(\bX-\smash{\tbX})\bSigma^{-\frac12}\|_{op} S(K)$
which proves that the map is Lipschitz on $K$.

For (ii), we use that on the left hand side of \eqref{lipschitz-fundamental-inequality},
\begin{equation*}
\|\by-\bX\hbbeta - \{\tby-\tbX\tbbeta\}\|^2
=
\|\tbX(\hbbeta-\tbbeta)\|^2
+ 2[\tbX(-\hbbeta+\tbbeta)]^\top[\by-\tby+(\tbX-\bX)\hbbeta]
+
\|\by-\tby+(\tbX-\bX)\hbbeta\|^2.
\end{equation*}
Combined with \eqref{lipschitz-fundamental-inequality} this implies that
\begin{align*}
    &\min\big\{\mu_\rho\phi_{\min}(\tfrac 1 n \bSigma^{-1/2}\tbX{}^\top\tbX\bSigma^{-1/2}), L^{-1}\big\}
    (n\|\bSigma^{1/2}(\hbbeta-\tbbeta)\|^2 +  \|\bpsi-\tbpsi\|^2)
    \\&\le
    (\tbbeta -\hbbeta)^\top(\tbX-\bX)^\top\bpsi
    +
    (\tby + (\bX-\tbX)^\top\hbbeta -\by)^\top(\tbpsi - \bpsi).
     - 2[\tbX(-\hbbeta+\tbbeta)]^\top[\by-\tby+(\tbX-\bX)\hbbeta].
\end{align*}
The same argument as in (i) applies on every compact where the eigenvalues
of $n^{-1/2} \tbX\bSigma^{-1/2}$ are bounded away from $0$ and $+\infty$.
Finally,
(iii) directly follows from \eqref{lipschitz-fundamental-inequality}.
\end{proof}

\subsection{Lipschitz properties for a given, fixed \texorpdfstring{$\bep$}{epsilon}}

In this subsection, $\bep$ is fixed and we consider functions of $\bX\in\R^{n\times p}$
as defined in the following Lemma.

\begin{lemma}
    \label{lemma:general-C}
    Let $\bep\in\R^n$ be fixed and $\bX,\tbX$ be two design matrices.
    Define $\hbbeta=\hbbeta(\bX\bbeta + \bep,\bX)$ and
    $\tbbeta = \hbbeta(\tbX\bbeta + \bep,\tbX)$,
    $\bpsi = \psi(\bep + \bX\bbeta - \bX\hbbeta)$
    and
    $\tbpsi = \psi(\bep + \tbX\bbeta - \tbX\tbbeta)$
    as well as
    $\br=n^{-\frac12}\bpsi$ and $\tbr=n^{-\frac12}\tbpsi$,
    $\bh=\hbbeta-\bbeta$ and $\tbh=\tbbeta-\bbeta$.
    Let also  $D=(\|\br\|^2 + \|\bSigma^{\frac12}\bh\|^2)^{\frac12}$
    and $\tilde D = (\|\tbr\|^2 + \|\bSigma^{\frac12}\tbh\|^2)^{\frac12}$.
    If for some constant $L_*$
    and $\{\bX,\tbX\}\subset \R^{n\times p}$
    \begin{equation}
    \label{eq:liscphitz-mu-h-r-general-C}
        (\|\bSigma^{\frac12}(\bh-\tbh)\|^2 + \|\br-\tbr\|^2 )^{\frac12}
        \le n^{-\frac12} \|(\bX-\tbX)\bSigma^{-\frac12}\|_{op} L_* (\|\br\|^2 + \|\bSigma^{\frac12}\bh\|^2)^{\frac12}
    \end{equation}
    holds, then we also have the Lipschitz properties
    \begin{align}
        \label{eq:lipschitz-bh-D-inverse}
        \|\bSigma^{\frac12}(\bh D^{-1} - \tbh \tilde D^{-1})\|   & \le
        n^{-\frac12}\|( \bX-\tbX)\bSigma^{-\frac12}\|_{op} 2 L_*,
        \\
        \|\br D^{-1} - \tbr \tilde D^{-1}\|   & \le
        n^{-\frac12}\|(\bX-\tbX)\bSigma^{-\frac12}\|_{op} 2 L_*,
        \label{eq:lipschitz-br-D-inverse}
        \\
        n^{-\frac12}\|\bSigma^{-\frac12}(\tfrac{\bX^\top \br}{D} -  \tfrac{\tbX{}^\top \tbr}{\tilde D} )\|   & \le
        n^{-\frac12}\|(\bX-\tbX)\bSigma^{-\frac12}\|_{op}(1 +  2 L_* \|n^{-\frac12}\tbX\bSigma^{-\frac12}\|_{op}),
        \label{eq:lipschitz-XT-br-D-inverse}
        \\
        |D^{-1} - \tilde D^{-1}|
        &
        \le n^{-\frac12}\|(\bX-\tbX)\bSigma^{-\frac12}\|_{op}
        L_* \tilde D^{-1}.
        \label{eq:lipschitz-D-inverse}
    \end{align}
\end{lemma}

\begin{proof}[Proof of \Cref{lemma:general-C}]
    Assume that $\bSigma=\bI_p$ without loss of generality,
    by performing the variable change \eqref{eq:variable-change}
    if necessary.
    By the triangle inequality,
$\|\bh D^{-1} - \tbh \tilde D^{-1}\|
\le \|\tbh\| |D^{-1} - \tilde D^{-1}| + D^{-1} \|\bh-\tbh\|$.
Then $D^{-1} \|\bh-\tbh\| \le n^{-\frac12}\|\bX-\tbX\|_{op}L_*$
for the second term by \eqref{eq:liscphitz-mu-h-r-general-C}.
For the first term, $\|\tbh\| |D^{-1} - \tilde D^{-1}|\le D^{-1}|D - \tilde D|
\le D^{-1}(\|\br-\tbr\|^2 + \|\bh-\tbh\|^2)^{\frac12}$
by the triangle inequality, and another application of \eqref{eq:liscphitz-mu-h-r-general-C}
provides \eqref{eq:lipschitz-bh-D-inverse}.
The exact same argument provides \eqref{eq:lipschitz-br-D-inverse}
since the roles of $\bh$ and $\br$ are symmetric in \eqref{eq:liscphitz-mu-h-r-general-C}.
For \eqref{eq:lipschitz-XT-br-D-inverse}, we use
$$\|\bX^\top \br D^{-1} -  \tbX{}^\top \tbr \tilde D^{-1}\|
\le \|\bX-\tbX\|_{op} (\|\br\|D^{-1})
+ \|\tbX\|_{op} \|\br D^{-1} - \tbr \tilde D^{-1}\|.$$
Combined with \eqref{eq:lipschitz-br-D-inverse} and $\|\br\|D^{-1} \le 1$, this provides
\eqref{eq:lipschitz-XT-br-D-inverse}.
For the fourth inequality,
by the triangle inequality
$
    |D^{-1} - \tilde D^{-1}|
    \le
    (D\tilde D)^{-1}
    |D-\tilde D|
    \le n^{-\frac12}\|\bX-\tbX\|
    L_* \tilde D
    $ thanks to \eqref{eq:liscphitz-mu-h-r-general-C}.
\end{proof}

\begin{proposition}
    \label{prop:lipschtiz}
    Let \Cref{assum:rho} and \Cref{assum:g}(i) be fulfilled.
    Consider the notation of \Cref{lemma:general-C} for
    $\bX,\bpsi,\br,\bh$ and $\tbX,\tbpsi,\tbr,\tbh$.
    Then by  \eqref{lipschitz-fundamental-inequality} we have
    \begin{align}
        \mu\|\bSigma^{\frac12}(\bh-\tbh)\|^2 + \|\br-\tbr\|^2
        &=\mu\|\bSigma^{\frac12}(\hbbeta-\tbbeta)\|^2 + \|\bpsi-\tbpsi\|^2/n
        \nonumber
        \\&\le
        \big[
        (\tbh - \bh)^\top(\tbX-\bX)^\top\bpsi
        + 
        \bh^\top(\bX-\tbX)^\top(\tbpsi - \bpsi)
        \big]/n
        \label{first-inequality-to-take-derivatives}
        \\&\le
        n^{-\frac12}\|(\bX-\tbX)\bSigma^{-\frac12}\|_{op}
        (\|\bSigma^{\frac12}(\tbh-\bh)\|^2 +  \|\tbr-\br\|^2)^{\frac12}
        (\|\br\|^2 + \|\bSigma^{\frac12}\bh\|^2)^{\frac12}.
        \nonumber
    \end{align}
    Hence \eqref{eq:liscphitz-mu-h-r-general-C} holds for
    $L_* = \max(\mu^{-1},1)$ and
    all $\bX,\tbX\in \R^{n\times p}$.
\end{proposition}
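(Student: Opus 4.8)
The plan is to derive the entire display from the fundamental inequality \eqref{lipschitz-fundamental-inequality} of \Cref{prop:constant-bv-e_j}, specialized to the two regression problems with responses $\by=\bX\bbeta+\bep$ and $\tby=\tbX\bbeta+\bep$ for the common fixed $\bep$. With this choice the minimizers in \Cref{prop:constant-bv-e_j} are exactly $\hbbeta=\hbbeta(\bX\bbeta+\bep,\bX)$ and $\tbbeta=\hbbeta(\tbX\bbeta+\bep,\tbX)$, and the vectors $\bpsi,\tbpsi$ there coincide with those of \Cref{lemma:general-C}. Since \Cref{assum:rho} makes $\psi$ $1$-Lipschitz we take $L=1$, so the $\max$ on the left of \eqref{lipschitz-fundamental-inequality} is at least $\|\tbpsi-\bpsi\|^2$; dividing \eqref{lipschitz-fundamental-inequality} by $n$ and using $\hbbeta-\tbbeta=\bh-\tbh$, $\bpsi=\sqrt n\,\br$, $\tbpsi=\sqrt n\,\tbr$ immediately produces the first two lines of the display, whose equality is purely notational.

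Next I would rewrite the right-hand side of \eqref{lipschitz-fundamental-inequality}. Its first term is $(\tbbeta-\hbbeta)^\top(\tbX-\bX)^\top\bpsi=(\tbh-\bh)^\top(\tbX-\bX)^\top\bpsi$. For its second term, the choice of responses gives $\tby-\by=(\tbX-\bX)\bbeta$, hence
\[
\tby+(\bX-\tbX)\hbbeta-\by
=(\tbX-\bX)\bbeta-(\tbX-\bX)\hbbeta
=(\bX-\tbX)(\hbbeta-\bbeta)=(\bX-\tbX)\bh,
\]
so the second term equals $\bh^\top(\bX-\tbX)^\top(\tbpsi-\bpsi)$. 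Dividing by $n$ then gives \eqref{first-inequality-to-take-derivatives}. This collapse of the ``awkward'' term in \eqref{lipschitz-fundamental-inequality} is the only point where the structure $\by=\bX\bbeta+\bep$ is used, and it is the step where one must track signs carefully.

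To pass from \eqref{first-inequality-to-take-derivatives} to the last line I would factor $\bX-\tbX=\bigl((\bX-\tbX)\bSigma^{-\frac12}\bigr)\bSigma^{\frac12}$, so that $(\bX-\tbX)^\top=\bSigma^{\frac12}\bigl((\bX-\tbX)\bSigma^{-\frac12}\bigr)^\top$, and bound each term by Cauchy--Schwarz: the first is at most $\|\bSigma^{\frac12}(\tbh-\bh)\|\,\|(\bX-\tbX)\bSigma^{-\frac12}\|_{op}\,\|\bpsi\|$ and the second at most $\|\bSigma^{\frac12}\bh\|\,\|(\bX-\tbX)\bSigma^{-\frac12}\|_{op}\,\|\tbpsi-\bpsi\|$. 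Using $\|\bpsi\|=\sqrt n\|\br\|$ and $\|\tbpsi-\bpsi\|=\sqrt n\|\tbr-\br\|$ and dividing by $n$, the right-hand side is bounded by $n^{-\frac12}\|(\bX-\tbX)\bSigma^{-\frac12}\|_{op}\bigl(\|\bSigma^{\frac12}(\tbh-\bh)\|\,\|\br\|+\|\bSigma^{\frac12}\bh\|\,\|\tbr-\br\|\bigr)$, and a final Cauchy--Schwarz in $\R^2$ on the pairs $(\|\bSigma^{\frac12}(\tbh-\bh)\|,\|\tbr-\br\|)$ and $(\|\br\|,\|\bSigma^{\frac12}\bh\|)$ yields the claimed last line.

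Finally, to conclude \eqref{eq:liscphitz-mu-h-r-general-C} I would bound the left side of the established chain from below by $\min(\mu,1)\bigl(\|\bSigma^{\frac12}(\bh-\tbh)\|^2+\|\br-\tbr\|^2\bigr)$ and cancel one factor of $\bigl(\|\bSigma^{\frac12}(\bh-\tbh)\|^2+\|\br-\tbr\|^2\bigr)^{\frac12}$ from both sides (the inequality being trivial when this quantity vanishes). This gives \eqref{eq:liscphitz-mu-h-r-general-C} with $L_*=\min(\mu,1)^{-1}=\max(\mu^{-1},1)$, and since no restriction on the matrices was used anywhere, one may take the open set $U$ to be all of $\R^{n\times p}$. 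I do not anticipate a genuine obstacle here: the argument is two rounds of Cauchy--Schwarz together with the sign bookkeeping above; the only subtlety is to insert $\bSigma^{\pm\frac12}$ at the right places so that the operator norm appears as $\|(\bX-\tbX)\bSigma^{-\frac12}\|_{op}$ rather than $\|\bX-\tbX\|_{op}$.
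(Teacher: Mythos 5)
Your proposal is correct and follows exactly the paper's own route: specialize \eqref{lipschitz-fundamental-inequality} to $\by=\bep+\bX\bbeta$, $\tby=\bep+\tbX\bbeta$ (which collapses the second term to $\bh^\top(\bX-\tbX)^\top(\tbpsi-\bpsi)$), apply Cauchy--Schwarz twice with the $\bSigma^{\pm\frac12}$ insertion, and divide by $(\|\br-\tbr\|^2+\|\bSigma^{\frac12}(\bh-\tbh)\|^2)^{\frac12}$ to obtain $L_*=\max(\mu^{-1},1)$. You have simply filled in the sign bookkeeping and the two Cauchy--Schwarz steps that the paper leaves implicit.
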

\begin{proof}
    This follows  by \eqref{lipschitz-fundamental-inequality}
    with $\tby = \bep + \tbX\bbeta$ and $\by = \bep + \bX\bbeta$.
    The last inequality in \eqref{first-inequality-to-take-derivatives}
    is due to the Cauchy-Schwarz inequality.
    Inequality \eqref{eq:liscphitz-mu-h-r-general-C} with the given $L_*$
    is obtained by dividing by $(\|\br-\tbr\|^2 + \|\bSigma^{\frac12}(\bh-\tbh)\|^2)^{\frac12}$.
\end{proof}

\begin{proposition}
    \label{prop:lipschtiz2-gamma-less-one}
    Let \Cref{assum:rho} and \Cref{assum:g}(ii) be fulfilled.
    Consider the notation of \Cref{lemma:general-C} for
    $\bX,\bpsi,\br,\bh$ and $\tbX,\tbpsi,\tbr,\tbh$.
    Then 
    \begin{align*}
      & \min\big(1, \mu_\rho\phi_{\min}(\bSigma^{-\frac12}\tbX{}^\top\tbX\bSigma^{-\frac12}/n)\big)
      \max\big(\|\br-\tbr\|, \|\bSigma^{\frac12}(\bh-\tbh)\|\big)
        \\&\le
        n^{-\frac12}\|(\bX-\tbX)\bSigma^{-\frac12}\|_{op}
        \big[\|\br\| + \|\bSigma^{\frac12}\bh\|
            \big(
            1 +
            2\mu_\rho\|n^{-\frac12}\tbX\bSigma^{-\frac12}\|_{op}
            \big)
        \big].
        \nonumber
    \end{align*}
    Hence \eqref{eq:liscphitz-mu-h-r-general-C} holds
    with $L_*= \max(1, \mu_\rho^{-1}
    \phi_{\min}(\frac1n\bSigma^{-\frac12}\tbX{}^\top\tbX\bSigma^{-\frac12})^{-1}
    )
    6\max(1, 2\mu_\rho \|n^{-1/2}\tbX\bSigma^{-1/2}\|_{op})
    $
    for all $\bX,\tbX\in\R^{n\times p}$.
\end{proposition}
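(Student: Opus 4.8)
The plan is to mirror the proof of Proposition~\ref{prop:lipschtiz}, but now the penalty contributes no strong convexity ($\mu=0$) while the loss does, with $\mu_\rho>0$ and Lipschitz constant $L=1$ (by \Cref{assum:rho}). Start from the fundamental inequality \eqref{lipschitz-fundamental-inequality}, applied with $\by=\bep+\bX\bbeta$ and $\tby=\bep+\tbX\bbeta$. Then $\by-\bX\hbbeta=\bep-\bX\bh$, $\tby-\tbX\tbbeta=\bep-\tbX\tbh$, so that $\{\by-\bX\hbbeta\}-\{\tby-\tbX\tbbeta\}=\tbX\tbh-\bX\bh$ and $\tby+(\bX-\tbX)\hbbeta-\by=(\bX-\tbX)\bh$, and $\hbbeta-\tbbeta=\bh-\tbh$. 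Plugging $\mu=0$, $L=1$ and dividing by $n$, \eqref{lipschitz-fundamental-inequality} becomes
\[
\max\!\Big(\|\br-\tbr\|^2,\ \tfrac{\mu_\rho}{n}\|\tbX\tbh-\bX\bh\|^2\Big)
\ \le\ \tfrac1n\Big[(\tbh-\bh)^\top(\tbX-\bX)^\top\bpsi+\bh^\top(\bX-\tbX)^\top(\tbpsi-\bpsi)\Big],
\]
with $\br=n^{-1/2}\bpsi$, $\bh=\hbbeta-\bbeta$ as in \Cref{lemma:general-C}.

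The key step is to convert the prediction seminorm $\|\tbX\tbh-\bX\bh\|^2$ into the $\bSigma$-norm $\|\bSigma^{\frac12}(\bh-\tbh)\|^2$. Expand exactly $\|\tbX\tbh-\bX\bh\|^2=\|\tbX(\tbh-\bh)\|^2+2(\tbh-\bh)^\top\tbX^\top(\tbX-\bX)\bh+\|(\tbX-\bX)\bh\|^2$, discard the last (nonnegative) term and move the cross term to the right-hand side, then bound $\|\tbX(\tbh-\bh)\|^2\ge n\,\phi_{\min}(\bSigma^{-\frac12}\tbX{}^\top\tbX\bSigma^{-\frac12}/n)\,\|\bSigma^{\frac12}(\tbh-\bh)\|^2$. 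Combining this with the $\|\br-\tbr\|^2$ branch of the max gives
\[
\min\!\big(1,\mu_\rho\phi_{\min}(\bSigma^{-\tfrac12}\tbX{}^\top\tbX\bSigma^{-\tfrac12}/n)\big)\max\!\big(\|\br-\tbr\|^2,\|\bSigma^{\tfrac12}(\bh-\tbh)\|^2\big)\ \le\ \tfrac1n(\mathrm{RHS})+\tfrac{2\mu_\rho}{n}\big|(\tbh-\bh)^\top\tbX^\top(\tbX-\bX)\bh\big|,
\]
where $\mathrm{RHS}$ is the bracket in the first display. Now estimate the two right-hand terms by Cauchy–Schwarz together with the operator-norm bound $\|(\tbX-\bX)\bu\|\le\|(\bX-\tbX)\bSigma^{-\frac12}\|_{op}\|\bSigma^{\frac12}\bu\|$, using $\|\bpsi\|=\sqrt n\|\br\|$, $\|\tbpsi-\bpsi\|=\sqrt n\|\tbr-\br\|$, and $\|\tbX(\tbh-\bh)\|\le\sqrt n\,\|n^{-\frac12}\tbX\bSigma^{-\frac12}\|_{op}\|\bSigma^{\frac12}(\tbh-\bh)\|$. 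Factoring out $n^{-\frac12}\|(\bX-\tbX)\bSigma^{-\frac12}\|_{op}$ and the common factor $\|\bSigma^{\frac12}(\tbh-\bh)\|\vee\|\tbr-\br\|$ yields exactly the displayed inequality of the proposition, with bracket $\|\br\|+\|\bSigma^{\frac12}\bh\|\big(1+2\mu_\rho\|n^{-\frac12}\tbX\bSigma^{-\frac12}\|_{op}\big)$.

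For the ``Hence'' part, restrict to $\bX,\tbX\in U$: there $\phi_{\min}(n^{-\frac12}\tbX\bSigma^{-\frac12})\ge(1-\sqrt\gamma)/2$, so $\phi_{\min}(\bSigma^{-\frac12}\tbX{}^\top\tbX\bSigma^{-\frac12}/n)$ is bounded below by a positive constant depending on $\gamma$, and $\|n^{-\frac12}\tbX\bSigma^{-\frac12}\|_{op}\le 2+\sqrt\gamma$. Dividing the proposition's inequality by $\|\bSigma^{\frac12}(\tbh-\bh)\|\vee\|\tbr-\br\|$ (using $\max(a^2,b^2)=(a\vee b)^2$), then $(\|\bSigma^{\frac12}(\bh-\tbh)\|^2+\|\br-\tbr\|^2)^{1/2}\le\sqrt2\,(\|\bSigma^{\frac12}(\tbh-\bh)\|\vee\|\tbr-\br\|)$ on the left and $\|\br\|+a\|\bSigma^{\frac12}\bh\|\le\sqrt2\max(1,a)(\|\br\|^2+\|\bSigma^{\frac12}\bh\|^2)^{1/2}$ with $a=1+2\mu_\rho(2+\sqrt\gamma)$ on the right, and collecting the resulting constants, gives \eqref{eq:liscphitz-mu-h-r-general-C} for the stated $L_*$. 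The main obstacle is the conversion step just described: strong convexity of $\rho$ only controls the loss in the seminorm $\|\tbX(\cdot)\|$, so one must invoke the lower eigenvalue bound on $U$ (the analogue of $\gamma<1$ making $\bX^\top\bX$ invertible) to transfer it to $\|\bSigma^{\frac12}(\cdot)\|$ while keeping the cross term $(\tbh-\bh)^\top\tbX^\top(\tbX-\bX)\bh$ under control; everything else is Cauchy–Schwarz and bookkeeping of constants.
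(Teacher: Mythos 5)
Your proposal is correct and follows essentially the same route as the paper: start from \eqref{lipschitz-fundamental-inequality}, expand $\|\bX\bh-\tbX\tbh\|^2$ to isolate $\|\tbX(\bh-\tbh)\|^2$ plus a cross term, lower-bound by the smallest eigenvalue of $\bSigma^{-\frac12}\tbX{}^\top\tbX\bSigma^{-\frac12}/n$, and finish with Cauchy--Schwarz and the eigenvalue bounds defining $U$. The only differences are cosmetic (sign conventions on the cross term and a more explicit bookkeeping of the constants in the ``Hence'' step, which the paper leaves implicit).
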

\begin{proof}
    By  \eqref{lipschitz-fundamental-inequality} we have
    \begin{align*}
        \nonumber
      \max(\|\br-\tbr\|^2, \mu_\rho\|\bX\bh -\tbX\tbh\|^2/n)
        \le
        \big[
        (\tbh - \bh)^\top(\tbX-\bX)^\top\bpsi
        -
        \bh^\top(\bX-\tbX)^\top(\tbpsi - \bpsi)
        \big]/n.
    \end{align*}
    We also have 
    $\|\bX\bh -\tbX\tbh\|^2
    = \|\tbX(\bh-\tbh)\|^2
    + 2[\tbX(\bh-\tbh)]^\top(\bX-\tbX)\bh
    + \|(\bX-\tbX)\bh\|^2$
    so that the previous display implies
    \begin{equation}
    \begin{aligned}
      & \max(\|\br-\tbr\|^2, \mu_\rho\|\tbX(\bh -\tbh)\|^2/n)
        \\&\le
        \big[
        (\tbh - \bh)^\top(\tbX-\bX)^\top\bpsi
        -
        \bh^\top(\bX-\tbX)^\top(\tbpsi - \bpsi)
        - 2\mu_\rho[\tbX(\bh-\tbh)]^\top(\bX-\tbX)\bh
        \big]/n
    \end{aligned}
    \label{eq:useful-lasso}
    \end{equation}
    and the conclusion holds by the Cauchy-Schwarz
    inequality and properties of the operator norm.
\end{proof}

\section{Gradient identities}
\label{sec:gradient-identities}

\begin{corollary}
    \label{corollary:constant-bv-e_j}
    Let the setting
    and assumptions of Proposition~\ref{prop:constant-bv-e_j} be fulfilled.
If $\tby = \by + \hbeta_j\bv$ and
$\tbX = \bX + \bv\be_j^\top$ for any direction $\bv\in\R^n$
with $\bv^\top\bpsi = 0$ and index $j\in[p]$, then $\tbpsi=\bpsi$
and the solution $\hbbeta$ of the optimization problem \eqref{M-estimator-rho}
is also solution of the same optimization problem with
$(\by,\bX)$ replaced by $(\tby,\tbX)$.
If additionally $\mu>0$,
then $\tbbeta=\hbbeta$ must hold.
\end{corollary}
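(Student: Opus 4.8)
The plan is to verify by direct substitution that the rank-one perturbation $(\by,\bX)\rightsquigarrow(\tby,\tbX)=(\by+\hbeta_j\bv,\ \bX+\bv\be_j^\top)$ leaves unchanged both the residual vector evaluated at $\hbbeta$ and the vector $\bX^\top\psi(\by-\bX\hbbeta)$ entering the stationarity condition, and then to read off the three assertions from the KKT characterization of minimizers together with the fundamental inequality \eqref{lipschitz-fundamental-inequality} of Proposition~\ref{prop:constant-bv-e_j}.

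First I would compute $\tby-\tbX\hbbeta$. Since $\be_j^\top\hbbeta=\hbeta_j$, the rank-one term $\bv\be_j^\top\hbbeta=\hbeta_j\bv$ exactly cancels the shift $\hbeta_j\bv$ added to $\by$, so $\tby-\tbX\hbbeta=\by-\bX\hbbeta$; in particular $\psi(\tby-\tbX\hbbeta)=\bpsi$. Next, using the hypothesis $\bv^\top\bpsi=0$, I would compute $\tbX^\top\bpsi=\bX^\top\bpsi+\be_j(\bv^\top\bpsi)=\bX^\top\bpsi$. Because $\rho$ is convex, the perturbed objective $\bb\mapsto\frac1n\sum_{i=1}^n\rho(\tilde y_i-\be_i^\top\tbX\bb)+g(\bb)$ is convex, so a vector $\bb$ is one of its minimizers if and only if it satisfies the stationarity condition $\tbX^\top\psi(\tby-\tbX\bb)\in n\,\partial g(\bb)$. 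Evaluated at $\bb=\hbbeta$, the two identities above turn this condition into $\bX^\top\bpsi\in n\,\partial g(\hbbeta)$, which is precisely the optimality condition already satisfied by $\hbbeta=\hbbeta(\by,\bX)$ for the original data. Hence $\hbbeta$ is also a solution of the optimization problem \eqref{M-estimator-rho} with $(\by,\bX)$ replaced by $(\tby,\tbX)$, and taking $\tbbeta=\hbbeta$ gives $\tbpsi=\psi(\tby-\tbX\hbbeta)=\bpsi$.

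To handle an arbitrary minimizer $\tbbeta$ of the perturbed problem, and in particular the uniqueness $\tbbeta=\hbbeta$ when $\mu>0$, I would instead apply the fundamental inequality \eqref{lipschitz-fundamental-inequality} to the two data sets $(\by,\bX)$ and $(\tby,\tbX)$. Its right-hand side equals $(\tbbeta-\hbbeta)^\top(\tbX-\bX)^\top\bpsi+\big(\tby+(\bX-\tbX)\hbbeta-\by\big)^\top(\tbpsi-\bpsi)$, and the same two cancellations give $(\tbX-\bX)^\top\bpsi=(\bv^\top\bpsi)\be_j=0$ and $\tby+(\bX-\tbX)\hbbeta-\by=\hbeta_j\bv-(\be_j^\top\hbbeta)\bv=0$, so the right-hand side vanishes. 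Since $\psi$ is $1$-Lipschitz, the left-hand side of \eqref{lipschitz-fundamental-inequality} dominates $\|\tbpsi-\bpsi\|^2$, which is therefore $0$, i.e. $\tbpsi=\bpsi$; and if moreover $\mu>0$ the left-hand side dominates $n\mu\|\bSigma^{\frac12}(\hbbeta-\tbbeta)\|^2$, so $\bSigma^{\frac12}(\hbbeta-\tbbeta)=0$, and invertibility of $\bSigma$ forces $\tbbeta=\hbbeta$.

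I do not expect a real obstacle: the argument is a short, deterministic verification that uses no distributional assumption on $\bX$. The only point requiring a little care is to invoke the optimality condition in its necessary-and-sufficient form --- valid exactly because the loss-plus-penalty objective is convex --- so that checking it at $\hbbeta$ already certifies that $\hbbeta$ minimizes the perturbed problem; the two cancellation identities themselves follow immediately from $\be_j^\top\hbbeta=\hbeta_j$ and the orthogonality $\bv^\top\bpsi=0$.
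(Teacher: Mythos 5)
Your proof is correct and follows essentially the same route as the paper: the two cancellation identities $(\tbX-\bX)^\top\bpsi=\mathbf{0}$ and $\tby+(\bX-\tbX)\hbbeta-\by=\mathbf{0}$ make the right-hand side of \eqref{lipschitz-fundamental-inequality} vanish (giving $\tbpsi=\bpsi$), and the KKT condition $\bX^\top\bpsi\in n\,\partial g(\hbbeta)$ is then verified unchanged at $\hbbeta$ for the perturbed data. The only cosmetic difference is in the $\mu>0$ case, where the paper invokes uniqueness of minimizers of strongly convex objectives while you extract $\tbbeta=\hbbeta$ directly from the left-hand side of \eqref{lipschitz-fundamental-inequality}; both are valid and equivalent in substance.
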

\begin{proof}
The right hand side of \eqref{lipschitz-fundamental-inequality}
is 0 for the given $\tby-\by$ and $\tbX-\bX$.
This proves that $\tbpsi=\bpsi$. Furthermore, the KKT conditions
for $\hbbeta$ read
$\bX^\top\bpsi \in n\partial g(\hbbeta)$,
and we have $\bX^\top\bpsi  = \tbX{}^\top\bpsi$,
and $\bpsi = \psi(\by-\bX\hbbeta) = \psi(\tby - \tbX\hbbeta)$.
This implies that
$\tbX{}^\top\psi(\tby-\tbX\hbbeta) \in n \partial g(\hbbeta)$
so that $\hbbeta$ is solution to the optimization problem
with data $(\tby,\tbX)$, even if $\mu=0$.
The claim $\hbbeta=\tbbeta$
for $\mu>0$ follows
by unicity of the minimizer of strongly convex functions.
\end{proof}


\begin{proof}[Proof of \Cref{lemma:ninja-variant}]
    Existence of the partial derivatives of $\hbpsi$ and $\hbbeta$
    at $(\by^0,\bX^0)$ is granted by the assumption
    of Frechet differentiability. If $\psi$ is continuously differentiable,
    the chain rule
    \eqref{eq:chain-rule-hard}
    holds.
    We now compute the directional derivatives.
    To this end, let $(\dy,\dX)\in\R^{n\times(1+p)}$ representing a perturbation direction (e.g., take $(\dy,\dX)=(\be_l,\mathbf 0_{n\times p})$ for the partial derivative
    in \eqref{eq:derivatives-y} or 
    $(\dy,\dX)=(\mathbf 0_n,\be_i\be_j^\top)$
    for the partial
    derivative in \eqref{eq:derivatives-X}).
    For $t\in\R$, let
    $(\by(t),\bX(t)) = (\by^0 + t\dy, \bX^0 + t \dX )$ and
    $\bb(t) = \hbbeta(\by(t),\bX(t))$. Set also
    $\bpsi(t)=\psi(\by(t) - \bX(t)\bb(t))$
    where as usual $\psi=\rho'$ acts componentwise.
    The KKT conditions at $t$ and $0$ read
    $\bX(t)^\top \bpsi(t)
    \in n \partial g(\bb(t))$
    and 
    $\bX(0)^\top\bpsi(0) \in n \partial g(\bb(0))$.
    Multiplying the difference of these KKT conditions
    by $(\bb(t)-\bb(0))$ and using the strong convexity of $g$ with respect to $\bSigma$, we find
    $$
    n \mu \|\bSigma^{\frac12}(\bb(t)-\bb(0)\|^2 
    \le
    (\bb(t) - \bb(0))^\top
    \big[\bX(t)^\top\bpsi(t) - \bX(0)^\top\bpsi(0)
    \big].
    $$
    Denote by $\bX'(0)$, $\bb'(0)$ and $\bpsi'(0)$ the derivatives at $t=0$.
    By the product rule
    $\frac{d}{dt} \bX(t)^\top\bpsi(t)\big|_{t=0} = 
    \bX'(0)^\top\hbpsi{}^0 + \bX^0{}^\top\bpsi'(0)$ and by the chain rule
    $\bpsi'(0) = \bD^0(\by'(0) - \bX'(0)\bb(0) - \bX(0)\bb'(0))$
    where $\bD^0 = \diag(\bpsi^0{}')$.
    Dividing by $t^2$ and taking the limit $t\to 0$ in the previous display and moving $\bb'(0)^\top\bX(0)^\top \bD^0 \bX(0)\bb'(0)$ to the left-hand side gives
    $$
    n\mu \|\bSigma^{1/2}\bb'(0)\|^2
    + \|(\bD^0)^{1/2} \bX(0)\bb'(0)\|^2
    \le
    \bb'(0)^\top\bigl[
        \bX'(0)^\top\hbpsi^0 + \bX^0{}^\top\bD^0\bigl(\by'(0) - \bX'(0)\bb(0) - \bX(0)\bb'(0)\bigr)
    \bigr].
    $$
    By definition of Frechet differentiability,
    the mapping $\mathcal B: (\dy,\bX)\mapsto \bSigma^{1/2}\bb'(0)$
    appearing in the left-hand side
    is a linear map $\mathcal B: \R^n\times \R^{n\times p}\to \R^p$.
    The mapping
    $\mathcal L:(\dy,\dX)\mapsto
    \bSigma^{-1/2}[
    \dX{}^\top\hbpsi^0 + \bX^0{}^\top\bD^0\bigl(\dy - \dX{}^\top\hbbeta{}^0\bigr)
    ]$ appearing on the right-hand side is also a linear map.
    Since $\bSigma$ and the matrix inside $\phi_{\min}(\cdot)$
    in \eqref{eq:hbA-op} are positive definite,
    $\mathcal L(\dy,\dX) = \mathbf 0_p$ implies $\bb'(0)=\mathbf 0_{p}$.
    Two linear mappings $\mathcal L$ and $\mathcal B$ from $\R^{n\times p}$
    to $\R^p$ have $\ker \mathcal L\subset\ker \mathcal B$ if and only
    if there exists a matrix $\bM$ with $\ker(\bM)^\perp\subset\text{Image}(\mathcal L)$ such that
    $\mathcal B = \bM \mathcal L$. This proves the existence of
    $\hbA(\by^0,\bX^0)$ in \eqref{eq:derivatives-X}-\eqref{eq:derivatives-y}
    by taking $\hbA(\by^0,\bX^0) = \bSigma^{-1/2}\bM\bSigma^{-1/2}$.
    If $\bu\in\R^p$ has unit norm and is a right singular vector associated with
    the largest singular value of $\bM$ then
    $\|\bM\|_{op} = \|\bM \bu\|$ and $\bu\in\ker(\bM)^\perp$. Since
    $\ker(\bM)^\perp\subset\text{Image}(\mathcal L)$ we can find
    $(\by,\dX)$ such that $\bu=\mathcal L(\dy,\dX)$. Then previous display
    then yields
    $f_{\min}\|\bM\bu\|^2 \le \bu^\top \bM \bu$ where
    $f_{\min}^{-1}$ is the right-hand side of \eqref{eq:hbA-op}.
    This provides $\|\bM\|_{op} = \|\bM\bu\| \le f_{\min}^{-1}$
    and concludes the proof of \eqref{eq:hbA-op}
    since $\bM=\bSigma^{1/2}\hbA{}(\by^0,\bX^0)\bSigma^{1/2}$.

The second claim, where $\psi$ is only assumed to be 1-Lipschitz, requires
the chain rule \eqref{eq:chain-rule-hard} to hold
for almost $(\by^0,\bX^0)$ in $U$.
    The validity of the chain rule \eqref{eq:chain-rule-hard} boils
    down to the chain rule
    for 
    $$\psi\circ u_i\qquad\text{ where }
    \qquad u_i:U \to \R,
    \quad
    (\by,\bX)\mapsto u_i(\by,\bX)= y_i - \bx_i^\top\hbbeta(\by,\bX)
    $$
    for all $i\in[n]$. 
    Since $\psi:\R\to\R$ is Lipschitz and $u_i$ is Lipschitz in $U$,
    \cite[Theorem 2.1.11]{ziemer2012weakly} implies that
    $(\partial/\partial y_l)\hpsi_i(\by,\bX) = \psi'(u_i(\by,\bX))
    (\partial/\partial y_l)u_i(\by,\bX)$
    almost everywhere in $U$.
    (This version of the chain rule is straightforward 
    at points $(\by,\bX)$ where
    $\psi'(u_i(\by,\bX))$ and $(\partial/\partial y_l)u_i(\by,\bX)$
    both exist,
    as well as at points $(\by,\bX)$ where 
    $(\partial/\partial y_l)u_i(\by,\bX) = 0$ thanks
    to
    $t^{-1}|\hbpsi(\by+t\be_l,\bX) - \hbpsi(\by,\bX)|
    \le M t^{-1} |u_i(\by+t\be_l,\bX)  - u_i(\by,\bX)|$
    in which case
    $(\partial/\partial y_l)\hpsi_i(\by,\bX)=0$
    and $\psi'(u_i(\by,\bX))$ need not exist.
    The non-trivial part of the argument
    in
    \cite[Theorem 2.1.11]{ziemer2012weakly}
    is to prove
    that the set $\{(\by,\bX)\in U:
        \psi'(u_i(\by,\bX)) \text{ fails to exist and }
        (\partial/\partial y_l)u_i(\by,\bX) \ne 0
    \}$ has Lebesgue measure 0.)
\end{proof}

\section{Inequalities for functions of standard multivariate normals}
\label{sec:6-identitities-gaussian}

This section provides several useful tail bound and moment
inequalities for functions of a matrix with iid $N(0,1)$ entries,
including the proof of \Cref{prop:SOS-X-D-bh-br-main-text}.

\begin{lemma}
    \label{lemma:Davidson}
    Let $\gamma>0$.
    If $p/n\le \gamma$ and
    $\bG\in\R^{n\times p}$ has iid $N(0,1)$ entries then
    the tail bound
    $\P(\|\bG\|_{op}>\sqrt n + \sqrt p + t)\le \Phi(-t)\le e^{-t^2/2}$
    holds
    where $\Phi$ is the standard normal CDF. As a consequence
    $\E[\|n^{-1/2}\bG\|_{op}^k]\le \C(k,\gamma)$ for any integer $k\ge 1$.
\end{lemma}
The above tail bound is given in 
\cite[Theorem II.13]{DavidsonS01} and the moment bound
$\E[\|n^{-1/2}\bG\|_{op}^k]\le \C(k,\gamma)$ is obtained by integrating
the tail bound.
The next result is well known and follows from
\cite{edelman1988eigenvalues} as explained 
in \cite[Proposition A.1]{bellec_zhang2019second_poincare} among others.
\begin{lemma}[Negative moments]
    \label{lemma:Edelman}
    Let $\gamma\in(0,1)$.
    If $p/n\le \gamma$ and
    $\bG\in\R^{n\times p}$ has iid $N(0,1)$ entries then
    $\E[\phi_{\min}(\frac 1 n \bG^\top\bG)^{-k}]\le \C(k,\gamma)$ for any integer $k\ge 1$.
\end{lemma}

\begin{proposition}[
    Eq. (8.6) in \cite{stein1981estimation}
    or \cite{bellec_zhang2018second_stein}
    ]
    \label{prop:BZ18}
If $\by = \bmu + \bep$ with $\bep\sim N({\bf 0},\sigma^2\bI_n)$ 
and $\mathbf{f}:\R^n\to\R^n$ has weakly differentiable components
then
\begin{align}
    \nonumber
\E[(\bep^\top \mathbf{f}(\by) - \sigma^2\dv \mathbf{f}(\by))^2] 
    &=
    \sigma^2 \E[\|\mathbf{f}(\by)\|^2] + \sigma^4\E\bigl[\trace[\left(\nabla \mathbf{f}(\by)\right)^2]\bigr] 
    \\&\le 
    \sigma^2 \E[\|\mathbf{f}(\by)\|^2] + \sigma^4\E[\|\nabla \mathbf{f}(\by)\|_F^2]
    \label{SOS-upper-bound-nabla-Frob}
    \\&\le
    \sigma^2\|\E[\mathbf{f}(\by)]\|^2 + 2 \sigma^4\E[\|\nabla \mathbf{f}(\by)\|_F^2],
    \label{SOS-followed-by-Poincare}
\end{align}
provided that the last line is finite. If $\E[\mathbf{f}(\by)]={\bf 0}$ then
\begin{equation}
    \E[(\bep^\top \mathbf{f}(\by) - \sigma^2\dv \mathbf{f}(\by))^2] 
    \le 
    2 \sigma^4\E[\|\nabla \mathbf{f}(\by)\|_F^2].
    \label{SOS-upper-bound-expectation-zero}
\end{equation}
\end{proposition}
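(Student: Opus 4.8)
The plan is to establish the equality in the first line by a double application of Gaussian integration by parts, and then derive the three successive inequalities from elementary linear algebra and the Gaussian Poincar\'e inequality. Since the asserted formulas involve only first derivatives of $\mathbf f$ but the computation will momentarily manipulate second derivatives (which cancel), the first step is a reduction to the case where $\mathbf f\in C_c^\infty(\R^n,\R^n)$: replace $\mathbf f$ by $\mathbf f_m = (\mathbf f\,\mathbf 1_{B_m})*\phi_{1/m}$ with $\phi$ a standard mollifier and $B_m\uparrow\R^n$, so that $\mathbf f_m\to\mathbf f$ and $\nabla\mathbf f_m\to\nabla\mathbf f$ in $L^2$ with respect to the law of $\by$; once the identity is proved for each $\mathbf f_m$ one lets $m\to\infty$, using the finiteness of the last line together with dominated convergence to pass $\E[\|\mathbf f\|^2]$, $\E[\bep^\top\mathbf f]$, $\E[\dv\mathbf f]$ and $\E[\trace((\nabla\mathbf f)^2)]$ to the limit. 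Assume henceforth $\mathbf f$ is smooth with bounded support.

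Write $W = \bep^\top\mathbf f(\by) - \sigma^2\dv\mathbf f(\by)$ with $\by = \bmu+\bep$, and let $\partial_i = \partial/\partial \varepsilon_i = \partial/\partial y_i$. The first-order Stein identity $\E[\varepsilon_i g(\bep)] = \sigma^2\E[\partial_i g(\bep)]$, applied coordinatewise to $g=f_i$, gives $\E[W]=0$; applied to $g_i = W f_i$ it gives $\E[W\,\bep^\top\mathbf f] = \sigma^2\E[\sum_i f_i\,\partial_i W] + \sigma^2\E[W\,\dv\mathbf f]$, whence
$$\E[W^2] = \E[W\,\bep^\top\mathbf f] - \sigma^2\E[W\,\dv\mathbf f] = \sigma^2\,\E\Big[\textstyle\sum_i f_i\,\partial_i W\Big].$$
Next expand $\partial_i W = f_i + \sum_k \varepsilon_k\,\partial_i f_k - \sigma^2\sum_k \partial_i\partial_k f_k$, so that $\E[W^2] = \sigma^2\E[\|\mathbf f\|^2] + \sigma^2\E[\sum_{i,k}\varepsilon_k f_i\,\partial_i f_k] - \sigma^4\E[\sum_{i,k} f_i\,\partial_i\partial_k f_k]$. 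Apply Stein's identity a second time to the middle term, differentiating $f_i\,\partial_i f_k$ in $\varepsilon_k$: this yields $\sigma^2\E[\sum_{i,k}(\partial_k f_i)(\partial_i f_k)] + \sigma^2\E[\sum_{i,k} f_i\,\partial_k\partial_i f_k]$, and by equality of mixed partials the second sum equals $\sigma^2\E[\sum_{i,k} f_i\,\partial_i\partial_k f_k]$, which cancels the $-\sigma^4\E[\sum_{i,k} f_i\,\partial_i\partial_k f_k]$ term. What remains is $\E[W^2] = \sigma^2\E[\|\mathbf f\|^2] + \sigma^4\E[\sum_{i,k}(\partial_i f_k)(\partial_k f_i)] = \sigma^2\E[\|\mathbf f\|^2] + \sigma^4\E[\trace((\nabla\mathbf f)^2)]$, which is the claimed identity.

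The three inequalities are then immediate. For the second line, any square matrix $A$ satisfies $\|A\|_F^2 - \trace(A^2) = \tfrac12\|A - A^\top\|_F^2 \ge 0$; apply this with $A = \nabla\mathbf f(\by)$. For the third line, the Gaussian Poincar\'e inequality $\Var(h(\bep)) \le \sigma^2\,\E\|\nabla h(\bep)\|^2$ applied to each coordinate $h = f_i$ and summed over $i$ gives $\E[\|\mathbf f\|^2] - \|\E[\mathbf f]\|^2 \le \sigma^2\E[\|\nabla\mathbf f\|_F^2]$, hence $\sigma^2\E[\|\mathbf f\|^2] \le \sigma^2\|\E[\mathbf f]\|^2 + \sigma^4\E[\|\nabla\mathbf f\|_F^2]$; combining with the second line gives the third, and the final display is the special case $\E[\mathbf f]=\mathbf 0$. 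The only genuinely delicate point is the reduction of the opening paragraph: the cancellation of second derivatives is valid pointwise only for smooth $\mathbf f$, so the mollification and truncation must be arranged carefully enough that all the expectations above converge, which is precisely where the finiteness hypothesis on the last line enters.
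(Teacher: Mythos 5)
Your proof is correct. The paper itself does not reprove the identity in the first line: it simply cites \cite{bellec_zhang2018second_stein} for it and then obtains \eqref{SOS-upper-bound-nabla-Frob} by Cauchy--Schwarz (equivalently, your observation that $\|A\|_F^2-\trace(A^2)=\tfrac12\|A-A^\top\|_F^2\ge 0$) and \eqref{SOS-followed-by-Poincare} by the componentwise Gaussian Poincar\'e inequality, exactly as you do. Your double application of Stein's identity, with the cancellation of the mixed second derivatives and the mollification/truncation reduction to handle merely weakly differentiable $\mathbf f$, is the standard derivation of the cited identity and is sound; the only part you rightly flag as delicate (passing to the limit in all four expectations) is handled by the $L^2$-Cauchy argument you sketch, using the finiteness of the last line together with Poincar\'e to control $\E[\|\mathbf f\|^2]$.
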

The first equality in \eqref{SOS-upper-bound-nabla-Frob}
is the identity studied in \cite{bellec_zhang2018second_stein} and
\eqref{SOS-upper-bound-nabla-Frob}
follows by the Cauchy-Schwarz inequality.
The second inequality is a consequence of the Gaussian Poincar\'e inequality
\cite[Theorem 3.20]{boucheron2013concentration} applied to 
each component of $\mathbf{f}$.

\onArxivOrInJournal{}{
The variant 
\eqref{SOS-concentration-bg} below may also be useful.

\begin{proposition}
    \label{prop:SOS-high-proba-g}
    Let $\mathbf{f}, \bep, \by$ be as in
    Proposition~\ref{prop:BZ18}.
    Then there exist random variables $Z,T,\tilde T$ with
    $Z\sim N(0,1)$
    and $\E[\tilde T^2]\vee \E[T^2]\le 1$ such that
    \begin{align}
        |\bep^\top \mathbf{f}(\by)
        - \sigma^2\dv \mathbf{f}(\by)
        |
        &\le
        \sigma |Z| ~ \|\E[\mathbf{f}(\by)]\|
        + \sigma^2 ~2|T|~\E[\nabla \mathbf{f}(\by)\|_F^2]^{\frac12}
        \\
        &\le
        \sigma |Z|~\|\mathbf{f}(\by)\|
        +
        \smash{
            \sigma^2(2|T|+|Z \tilde T|)
            \E[\nabla \mathbf{f}(\by)\|_F^2]^{\frac12}.
        }
        \label{SOS-concentration-bg}
    \end{align}
\end{proposition}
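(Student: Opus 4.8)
The plan is to reduce the statement to the already-established bounds of Proposition~\ref{prop:BZ18} by splitting off the exactly Gaussian part of $\bep^\top\mathbf{f}(\by)$ and working with the centered field. Write $\mathbf{m}=\E[\mathbf{f}(\by)]$ and $\bg(\by)=\mathbf{f}(\by)-\mathbf{m}$, so that $\E[\bg(\by)]=\mathbf{0}$, $\nabla\bg=\nabla\mathbf{f}$ and hence $\dv\bg=\dv\mathbf{f}$. The identity to exploit is the trivial decomposition $\bep^\top\mathbf{f}(\by)-\sigma^2\dv\mathbf{f}(\by)=\bep^\top\mathbf{m}+\bigl(\bep^\top\bg(\by)-\sigma^2\dv\bg(\by)\bigr)$. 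We may assume the right-hand side of \eqref{SOS-followed-by-Poincare} is finite (otherwise there is nothing to prove), so that both $\|\mathbf{m}\|<\infty$ and $\E[\|\nabla\mathbf{f}(\by)\|_F^2]<\infty$.

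First I would handle the linear term: $\bep^\top\mathbf{m}$ is centered Gaussian with variance $\sigma^2\|\mathbf{m}\|^2$, so it equals $\sigma\|\mathbf{m}\|\,Z$ where $Z:=\bep^\top\mathbf{m}/(\sigma\|\mathbf{m}\|)\sim N(0,1)$ when $\mathbf{m}\ne\mathbf{0}$, and we take $Z$ to be an arbitrary standard normal otherwise. Next I would bound the centered remainder by applying the mean-zero estimate \eqref{SOS-upper-bound-expectation-zero} to $\bg$, which gives $\E[(\bep^\top\bg(\by)-\sigma^2\dv\bg(\by))^2]\le 2\sigma^4\E[\|\nabla\mathbf{f}(\by)\|_F^2]$. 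Setting $T:=(\bep^\top\bg(\by)-\sigma^2\dv\bg(\by))/(2\sigma^2\E[\|\nabla\mathbf{f}(\by)\|_F^2]^{1/2})$ then yields $\E[T^2]\le\tfrac12\le1$, and the triangle inequality applied to the decomposition produces the first asserted inequality. The degenerate case $\E[\|\nabla\mathbf{f}(\by)\|_F^2]=0$ is dispatched separately: then $\nabla\mathbf{f}=\mathbf{0}$ a.e., hence $\bg(\by)=\mathbf{0}$ a.s.\ and $\dv\bg=0$, the remainder vanishes, and we set $T:=0$.

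For the second inequality I would pass from $\|\mathbf{m}\|$ to $\|\mathbf{f}(\by)\|$ via $\|\mathbf{m}\|\le\|\mathbf{f}(\by)\|+\|\bg(\by)\|$ and control $\|\bg(\by)\|$ in mean square exactly as in the passage from \eqref{SOS-upper-bound-nabla-Frob} to \eqref{SOS-followed-by-Poincare}, i.e.\ by the Gaussian Poincar\'e inequality \cite[Theorem 3.20]{boucheron2013concentration} applied to each component: $\E[g_k(\by)^2]=\Var[f_k(\by)]\le\sigma^2\E[\|\nabla f_k(\by)\|^2]$, and summing over $k$ gives $\E[\|\bg(\by)\|^2]\le\sigma^2\E[\|\nabla\mathbf{f}(\by)\|_F^2]$. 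Then $\tilde T:=\|\bg(\by)\|/(\sigma\E[\|\nabla\mathbf{f}(\by)\|_F^2]^{1/2})$ (and $\tilde T:=0$ in the degenerate case) satisfies $\E[\tilde T^2]\le1$ and $\|\bg(\by)\|=\sigma|\tilde T|\,\E[\|\nabla\mathbf{f}(\by)\|_F^2]^{1/2}$; substituting this into the first inequality, with the \emph{same} $Z$, gives $\sigma|Z|\,\|\mathbf{m}\|\le\sigma|Z|\,\|\mathbf{f}(\by)\|+\sigma^2|Z\tilde T|\,\E[\|\nabla\mathbf{f}(\by)\|_F^2]^{1/2}$ and hence the second asserted inequality.

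I do not expect a genuine obstacle here: the argument is a one-line algebraic decomposition followed by two invocations of Proposition~\ref{prop:BZ18} (its mean-zero special case \eqref{SOS-upper-bound-expectation-zero} and the componentwise Poincar\'e step). The only points requiring mild care are keeping the random variable $Z$ consistent across the two displayed inequalities of the proposition --- which is automatic since $Z$ is defined once from $\bep^\top\mathbf{m}$ --- and the degenerate cases $\mathbf{m}=\mathbf{0}$, $\E[\|\nabla\mathbf{f}(\by)\|_F^2]=0$, and (excluded by the finiteness reduction) $\E[\|\nabla\mathbf{f}(\by)\|_F^2]=\infty$, each handled by the conventions indicated above.
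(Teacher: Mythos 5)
Your proposal is correct and follows essentially the same route as the paper's proof: the same decomposition into the exactly Gaussian term $\bep^\top\E[\mathbf{f}(\by)]$ (giving $Z$) plus the centered remainder bounded via the mean-zero inequality \eqref{SOS-upper-bound-expectation-zero} (giving $T$), and the same passage from $\|\E[\mathbf{f}(\by)]\|$ to $\|\mathbf{f}(\by)\|$ via the triangle inequality and the componentwise Gaussian Poincar\'e inequality (giving $\tilde T$). The only differences are cosmetic — your normalization of $T$ yields $\E[T^2]\le 1/2$ rather than $\le 1$, and you spell out the degenerate cases that the paper leaves implicit.
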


\begin{proof}
    Define
    $\mathbf{F}(\by) = \mathbf{f}(\by) - \E[\mathbf{f}(\by)]$, $Z = \sigma^{-1}\bep^\top\E[\mathbf{f}(\by)]/\|\E[\mathbf{f}(\by)]\|\sim N(0,1)$  and
    $$T^2=\big(\bep^\top \mathbf{f}(\by) - \dv \mathbf{f}(\by) - Z\sigma \|\E \mathbf{f}(\by)\|\big)^2
          \big/(2\sigma^4\E[\|\nabla f(\by)\|_F^2]).$$
    Since 
    $\E[\mathbf{F}(\by)]=0$, by \eqref{SOS-followed-by-Poincare} applied to $\mathbf{F}$
    we have
    \bes
    2\sigma^4\E[\|\nabla \mathbf{f}(\by)\|_F^2] \E[T^2]
      &=&\E[(\bep^\top \mathbf{F}(\by) - \dv \mathbf{F}(\by))^2]
    \\&\le& 2 \sigma^4 \E[\| \nabla \mathbf{F}(\by)\|_F^2]
    = 2 \sigma^4 \E[\| \nabla \mathbf{f}(\by)\|_F^2]
    \ees
    so that $\E[T^2]\le 1$.
    Next, let $\tilde T =[\sigma^2\E[\|\nabla \mathbf{f}(\by)\|_F^2]^{-\frac12}
    \big|\|\E[\mathbf{f}(\by)]\| - \|\mathbf{f}(\by)\|\big|$, which satisfies
    $\E[\tilde T^2]\le 1$ by the Gaussian Poincar\'e inequality.
    By construction of $T$ and $\tilde T$, we obtain \eqref{SOS-concentration-bg}.
\end{proof}
}

\propSOSmatrix

\begin{proof}[Proof of Proposition~\ref{prop:SOS-X-D-bh-br-main-text}]
Proposition~\ref{prop:SOS-X-D-bh-br-main-text} is 
obtained for $\bX$ with iid $N(0,1)$
entries by applying
\eqref{SOS-upper-bound-nabla-Frob}
to $\by=\bep=\text{vect}(\bX)$
and $\mathbf f(\bX)=\text{vect}(\brho(\bX)\bfeta(\bX)^\top)$ where
$\text{vect}(\cdot)$ is the 
vectorization operator.
\end{proof}

\section{$\chi^2$ type bounds under dependence}
\label{sec:proof_chi2}
To prove \Cref{thm:chi-square-type-main-text-rho}
and \Cref{big_cor}, we first derive a lemma
to control the correlation between
the two mean-zero random variables 
\begin{equation}
    (\bz_j^{\pb{\top}} f(\bz_k))^2-\|f (\bz_k)\|^2
\quad \text{ and } \quad
(\bz_k^{\pb{\top}} h(\bz_j))^2-\|h (\bz_j)\|^2
\label{eq:two-randomvariables-small-correlation}
\end{equation}
where $\bz_j,\bz_k$ are independent standard normal random vectors
and $f,h$ are functions $\R^n\to\R^n$.
If $f,h$ are constant, then
the correlation between these two random variables is 0 by independence.
If $f,h$ are non-constant, the following gives an exact
formula and an upper bound for the correlation
of the two random variables in 
\eqref{eq:two-randomvariables-small-correlation}.

\begin{lemma}
    \label{lemma:f-g-z_j-z_k}
    Let $\bz_j,\bz_k$ be independent $N({\bf 0},\bI_n)$ random vectors.
    Let $f,h:\R^n\to\R^n$ deterministic with weakly differentiable
    components and define the random matrices $\bA,\bB\in\R^{n\times n}$
    respectively by
    $\bA = \bigl(
            \bz_j^\top f(\bz_k) \bI_n
            + f(\bz_k)\bz_j^\top
        \bigr)
        \nabla f(\bz_k)^\top
    $ and
    $\bB =
         \bigl(
         \bz_k^\top h(\bz_j) \bI_n
        + h(\bz_j) \bz_k^\top
        \bigr)
    \nabla h(\bz_j)^\top
    $.
    Assume that
    \begin{equation}
        \label{assumption-A-B-sobolev} 
    \E[\|f(\bz_k)\|^4]
    +
    \E[\|h(\bz_j)\|^4]
    +
    \E[\|\bA\|_F^2]
    +
    \E[\|\bB\|_F^2]
    < +\infty.
    \end{equation}
    Then equality
\begin{equation}
    \label{eq:equality-correlation-lemma}
\E\big[
    \big\{(\bz_j^{\pb{\top}} f(\bz_k))^2-\|f (\bz_k)\|^2 \big\}
    \big\{(\bz_k^{\pb{\top}} h(\bz_j))^2-\|h (\bz_j)\|^2 \big\}
    \big]
    =
    \E[
    \trace
    \{
        \bA \bB
    \}
    ]
    \end{equation}
holds and
$\E|\trace[\bA\bB]| \le
        4
        \E\bigl[\|\nabla f(\bz_k)\|_F^2\|f(\bz_k)\|^2\bigr]^{\frac12}
        \E\bigl[\|\nabla h(\bz_j)\|_F^2\|h(\bz_j)\|^2\bigr]^{\frac12}
$.
\end{lemma}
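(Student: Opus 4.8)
The plan is to establish the equality \eqref{eq:equality-correlation-lemma} by Gaussian integration by parts (Stein's identity) applied once in each of the two independent blocks $\bz_j,\bz_k$, and then to read off the bound from it via two applications of Cauchy--Schwarz together with elementary Gaussian moment computations. Write $\ell_A=\bz_j^\top f(\bz_k)$ and $\ell_B=\bz_k^\top h(\bz_j)$, so that the two factors in \eqref{eq:equality-correlation-lemma} are $\ell_A^2-\|f(\bz_k)\|^2$ and $\ell_B^2-\|h(\bz_j)\|^2$, and note $\bA=\bigl(\ell_A\bI_n+f(\bz_k)\bz_j^\top\bigr)\nabla f(\bz_k)^\top$, $\bB=\bigl(\ell_B\bI_n+h(\bz_j)\bz_k^\top\bigr)\nabla h(\bz_j)^\top$ (reading the scalar $\bz_k^\top h(\bz_j)$ as $\ell_B\bI_n$, etc.). Hypothesis \eqref{assumption-A-B-sobolev} guarantees every expectation below is finite and makes Fubini and the integration-by-parts steps legitimate; the weak-differentiability of the components of $f,h$ is handled by first proving the identity for smooth $f,h$ and then passing to the limit along a mollification, using \eqref{assumption-A-B-sobolev} for uniform integrability.

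First I would record the elementary identity: for $\bz\sim N(\mathbf 0,\bI_n)$, a vector $v$ independent of $\bz$, and weakly differentiable $G$, one has $\E\bigl[(\,(\bz^\top v)^2-\|v\|^2\,)G(\bz)\bigr]=\E\bigl[(\bz^\top v)\,v^\top\nabla_\bz G(\bz)\bigr]$, obtained by expanding $(\bz^\top v)^2=\sum_{a,b}v_av_bz_az_b$ and applying Stein's lemma to $z_a$. Conditioning on $\bz_k$, taking $v=f(\bz_k)$ and $G=\ell_B^2-\|h(\bz_j)\|^2$, and computing $\nabla_{\bz_j}G=2\,\nabla_{\bz_j}h(\bz_j)\,(\bz_k\bz_k^\top-\bI_n)h(\bz_j)$ by the chain rule, turns the left side of \eqref{eq:equality-correlation-lemma} into $2\,\E\bigl[\ell_A\,(\nabla_{\bz_j}h(\bz_j)^\top f(\bz_k))^\top(\bz_k\bz_k^\top-\bI_n)h(\bz_j)\bigr]$. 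Then I would integrate by parts a second time, in $\bz_k$ with $\bz_j$ frozen, applying Stein's lemma to the explicit factor $\bz_k^\top h(\bz_j)$; the derivative also hits $f(\bz_k)$ and its Jacobian, producing terms containing $\nabla f(\bz_k)$, and after collecting terms the contribution of $-\bI_n$ cancels a pair of them. Re-expressing the survivors using $\trace(XY)=\trace(YX)$, the bookkeeping relations $\nabla f(\bz_k)^\top\bz_j=\nabla_{\bz_k}\ell_A$, $\nabla h(\bz_j)^\top\bz_k=\nabla_{\bz_j}\ell_B$, and $\trace\!\bigl(\nabla f(\bz_k)^\top\nabla h(\bz_j)^\top\bigr)=\dv_{\bz_j}\!\bigl(\nabla f(\bz_k)^\top h(\bz_j)\bigr)$, one recognizes $\E[\trace(\bA\bB)]$. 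I expect this reorganization to be the main obstacle: in particular one must verify the auxiliary identity $\E\bigl[(\bz_j^\top\nabla f^\top h)(\bz_k^\top\nabla h^\top f)\bigr]=\E\bigl[\ell_A\ell_B\,\trace(\nabla f^\top\nabla h^\top)\bigr]$, which is exactly what makes the four-term expansion of $\trace(\bA\bB)$ collapse onto the expression produced by the double integration by parts; it is itself an instance of Gaussian integration by parts once $\trace(\nabla f^\top\nabla h^\top)$ is read as a divergence.

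For the inequality I would start from the equality just proved. By Cauchy--Schwarz for the Frobenius inner product, $|\trace(\bA\bB)|\le\|\bA\|_F\|\bB\|_F$, and then by Cauchy--Schwarz for the expectation, $\E|\trace(\bA\bB)|\le\E[\|\bA\|_F^2]^{1/2}\E[\|\bB\|_F^2]^{1/2}$. It remains to bound $\E[\|\bA\|_F^2]\le4\,\E[\|\nabla f(\bz_k)\|_F^2\|f(\bz_k)\|^2]$: write $\bA=\ell_A\,\nabla f(\bz_k)^\top+f(\bz_k)\,\bigl(\bz_j^\top\nabla f(\bz_k)^\top\bigr)$, so that $\|\bA\|_F^2\le2\,\ell_A^2\|\nabla f(\bz_k)\|_F^2+2\,\|f(\bz_k)\|^2\|\bz_j^\top\nabla f(\bz_k)^\top\|^2$ using $\|X+Y\|_F^2\le2\|X\|_F^2+2\|Y\|_F^2$ and $\|uv^\top\|_F=\|u\|\|v\|$; conditioning on $\bz_k$ and using $\E[\ell_A^2\mid\bz_k]=\|f(\bz_k)\|^2$ and $\E[\|\bz_j^\top\nabla f(\bz_k)^\top\|^2\mid\bz_k]=\|\nabla f(\bz_k)\|_F^2$ gives the claim, and symmetrically $\E[\|\bB\|_F^2]\le4\,\E[\|\nabla h(\bz_j)\|_F^2\|h(\bz_j)\|^2]$. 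Multiplying the two square roots produces the factor $4$ and the stated bound, and finiteness of all quantities involved follows from \eqref{assumption-A-B-sobolev}.
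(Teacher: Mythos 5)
Your treatment of the inequality is essentially the paper's own argument: Cauchy--Schwarz for the Frobenius inner product, then for the expectation, then the conditional Gaussian moment identities $\E[(\bz_j^\top f(\bz_k))^2\mid\bz_k]=\|f(\bz_k)\|^2$ and $\E[\|\nabla f(\bz_k)\bz_j\|^2\mid\bz_k]=\|\nabla f(\bz_k)\|_F^2$; you reach the same constant $4$ via $\|X+Y\|_F^2\le 2\|X\|_F^2+2\|Y\|_F^2$ where the paper uses the $L^2$ triangle inequality, which is immaterial. The equality, however, is where the routes genuinely diverge, and it is also where your proposal has a hole. The paper does not integrate by parts twice by hand: it stacks $\bz=(\bz_j^\top,\bz_k^\top)^\top\in\R^{2n}$ and introduces the block vector fields $F(\bz)=(f(\bz_k)f(\bz_k)^\top\bz_j,\mathbf{0})$ and $H(\bz)=(\mathbf{0},h(\bz_j)h(\bz_j)^\top\bz_k)$, for which $\bz^\top F-\dv F$ and $\bz^\top H-\dv H$ are exactly the two centered factors, $F^\top H=0$ identically, and the Jacobians are block matrices whose only nonzero off-diagonal blocks are $\bA$ and $\bB$; the polarized second-order Stein identity of \cite{bellec_zhang2018second_stein} then gives $\E[F^\top H+\trace\{\nabla F\nabla H\}]=\E[\trace\{\bA\bB\}]$ with the block product doing all the bookkeeping in one line.

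Your route --- Stein's lemma in $\bz_j$ conditionally on $\bz_k$, then again in $\bz_k$ --- is morally how that identity is proved, so it is not a wrong approach. But the step you defer (``one must verify the auxiliary identity \ldots which is exactly what makes the four-term expansion of $\trace(\bA\bB)$ collapse onto the expression produced by the double integration by parts'') is not a side verification: it \emph{is} the lemma. After your first integration by parts the integrand is $2\,\ell_A\,(\nabla h(\bz_j)^\top f(\bz_k))^\top(\bz_k\bz_k^\top-\bI_n)h(\bz_j)$, in which $\ell_A$, $f(\bz_k)$ and the quadratic form all carry $\bz_k$-dependence simultaneously, so the second Stein step produces several cross terms that must be matched one by one against the four terms of $\trace(\bA\bB)=\ell_A\ell_B\trace(\nabla f^\top\nabla h^\top)+\ell_A(\nabla h\,\bz_k)^\top\nabla f^\top h+\ell_B(\nabla f\,\bz_j)^\top\nabla h^\top f+((\nabla f\,\bz_j)^\top h)((\nabla h\,\bz_k)^\top f)$. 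Until that matching is written out, the equality \eqref{eq:equality-correlation-lemma} is asserted rather than proved. I would either carry out that computation in full or, more economically, adopt the paper's block construction, which reduces the whole verification to checking $F^\top H=0$ and reading off the off-diagonal blocks of $\nabla F$ and $\nabla H$.
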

\begin{proof}
    Define $\bz\in\R^{2n}$ with $\bz=(\bz_j^\top, \bz_k^\top)^\top$
    as well as $F,H:\R^{2n}\to \R^{2n}$ by
    $$
    F\Bigl(\begin{bmatrix}\bz_j \\\bz_k\end{bmatrix}\Bigr)
    =\Bigl( \begin{bmatrix}f(\bz_k)  f(\bz_k)^\top\bz_j \\ {\bf 0}_{\R^n} 
    \end{bmatrix}
        \Bigr),
    \qquad
    H\Bigl(\begin{bmatrix}\bz_j \\\bz_k \end{bmatrix}\Bigr)
    =\Bigl( \begin{bmatrix} {\bf 0}_{\R^n} \\  h(\bz_j) h(\bz_j)^\top\bz_k
    \end{bmatrix}\Bigr).$$
    The Jacobians of $F, H$ are the $2n\times 2n$ matrices 
    \[
        \nabla F(\bz)^\top =
    \begin{bmatrix}
        f(\bz_k) f(\bz_k)^\top & \bA \\
     {\bf 0}_{n\times n} & {\bf 0}_{n\times n}
    \end{bmatrix}
    ,
    \qquad
    \nabla H(\bz)^\top =
    \begin{bmatrix}
    {\bf 0}_{n\times n} & {\bf 0}_{n\times n} \\
    \bB & h(\bz_j) h(\bz_j)^\top
    \end{bmatrix}
    .
    \]
    Since $\dv F(\bz)= \trace[\nabla F(\bz)]
    = \|f(\bz_k)\|^2$ and similarly for $\dv H$,
    the left hand side in \eqref{eq:equality-correlation-lemma} equals
    $$
    \E[(\bz^\top F(\bz) - \dv F(\bz))(\bz^\top H(\bz) - \dv H(\bz))]
    $$
    with $F,H$ being weakly differentiable with
    $\E[\|F(\bz)\|^2 + \|H(\bz)\|^2 + \|\nabla H(\bz)\|_F^2 + \|\nabla F(\bz)\|_F^2]
    < +\infty$
    thanks to \eqref{assumption-A-B-sobolev}.
    The last display is equal to
    $\E[F(\bz)^\top H(\bz) + \trace \{\nabla F(\bz) H(\bz)\}]$
    by Section 2.2 in \cite{bellec_zhang2018second_stein}.
    Here $F(\bz)^\top H(\bz)=0$ always holds by construction of $F,H$
    and the matrix product by block gives
    $\trace \{ \nabla H(\bz) \nabla F(\bz)\} = \trace \{\bA \bB\}$.

    Next, by the Cauchy-Schwarz inequality we have
    $\E|\trace[\bA\bB]|
    \le \E[\|\bA\|_F\|\bB\|_F]
    \le \E[\|\bA\|_F^2]^{1/2}
        \E[\|\bB\|_F^2]^{1/2}
    $.
    By definition of $\bA$ and properties
    of the operator norm,
    \begin{align*}
    &\|\bA\|_F \le \|\nabla f(\bz_k)\|_F 
    |\bz_j^\top f(\bz_k)| 
    + \| f(\bz_k)\| \|\nabla f(\bz_k) \bz_j\|.
    \end{align*}
    By the triangle inequality and independence we find
    \begin{align*}
    \E[\|\bA\|_F^2]^{1/2}
    &\le
    \E[\|\nabla f(\bz_k)\|_F^2(\bz_j^\top f(\bz_k))^2]^{1/2}
    +
    \E[\|\nabla f(\bz_k)\bz_j\|^2\| f(\bz_k)\|^2]^{1/2}
  \\&=
    \E[\|\nabla f(\bz_k)\|_F^2\|f(\bz_k)\|^2]^{1/2}
    +
    \E[\|\nabla f(\bz_k)\|_F^2\| f(\bz_k)\|^2]^{1/2}
    \end{align*}
    thanks to $\E[\bz_j\bz_j^\top|\bz_k]=\bI_n$
    and $\E[\|\nabla f(\bz_k) \bz_j\|^2|\bz_k] =
    \trace(\nabla f(\bz_k)^\top \nabla f(\bz_k) \E[\bz_j\bz_j^\top|\bz_k])$.
    Similarly,
    $\E[\|\bB\|_F^2]^{1/2}
    \le
    2\E[\|\nabla h(\bz_j)\|_F^2 \|h(\bz_j)\|^2]$
    which completes the proof.
\end{proof}

\thmChiSquare
\begin{proof}[Proof of \Cref{thm:chi-square-type-main-text-rho}]
    Let $\bz_j = \bX\be_j$. Let also
    $\xi_j = \bz_j^\top(\brho-\E_j[\brho]) - d_j $
    where $\E_j[\cdot]$ is the conditional expectation
    $\E_j[\cdot] = \E[\cdot|\bX_{-j}]$
    and $d_j = \sum_{i=1}^n \frac{\partial \rho_i}{\partial x_{ij}}$
    so that $\E_j[\xi_j] = 0$ by Stein's formula.
    Writing $\bz_j^\top \brho - d_j = \xi_j + \bz_j^\top\E_j[\brho]$
    and expanding the square, we find
    \bes
    W & 
    =
      & \sum_{j=1}^p\Bigl(\bz_j^\top \brho - d_j\Bigr)^2
    - p\|\brho\|^2
    =
        \sum_{j=1}^p\Bigl(\xi_j + \bz_j^\top\E_j[\brho]\Bigr)^2
    - p\|\brho\|^2
    \\&=&
  \sum_{j=1}^p
  \Big[
      \underbrace{\xi_j^2}_{(i)} 
      + \underbrace{2\xi_j \bz_j^\top\E_j[\brho]}_{(ii)}
      +\underbrace{(\bz_j^\top\E_j[\brho])^2 -\|\E_j[\brho]\|^2}_{(iii)}
      + \underbrace{\|\E_j[\brho]\|^2 -  \|\brho\|^2}_{(iv)}
  \Big].
    \ees
    This decomposition gives rise to 4 terms that we will bound separately.

    (i) $\E \sum_{j=1}^p \xi_j^2  \le 2\E \sum_{j=1}^p \sum_{i=1}^n \|\frac{\partial \brho}{\partial x_{ij}}\|^2$ by \eqref{SOS-upper-bound-expectation-zero}
    applied with respect to $\bz_j$ conditionally on $\bX_{-j}$
    for each $j\in[p]$.

    (ii)
    Since 
    $\E[(\bz_j^\top\E_j[\brho])^2]=\E[\|\E_j\brho\|^2]\le1$ by Jensen's inequality,
    the Cauchy-Schwarz inequality $\E[\sum_{j=1}^p a_j b_j]
    \le
    \E[\sum_{j=1}^p a_j^2]^{1/2}
    \E[\sum_{j=1}^p b_j^2]^{1/2}
    $
    yields
    $$\E\Big[\sum_{j=1}^p|\xi_j \bz_j^\top\E_j[\brho]|\Big]
    \le
    \sqrt p
    ~
    \E\Big[\sum_{j=1}^p\xi_j^2\Big]^{\frac12}
    \le
    \sqrt p
    \Big(2 \E \sum_{j=1}^p \sum_{i=1}^n \|\frac{\partial \brho}{\partial x_{ij}}\|^2
    \Big)^{\frac12}
    $$
    where the second inequality follows from the inequality
    derived for term (i).

    (iii)
    For the third term, set 
    $\chi_j = (\bz_j^\top\E_j[\brho])^2 -\|\E_j[\brho]\|^2$
    and note that
    $\E [ ( \sum_{j=1}^p \chi_j)^2 ]
  =\E\sum_{j=1}^p
\sum_{k=1}^p
    \chi_j \chi_k
    $.
    For the diagonal terms,
    $\E\sum_{j=1}^p \chi_j^2 = 
    2 \E\sum_{j=1}^p \|\E_j[\brho]\|^4
    \le 2p
    $
    because $\bz_j\sim N(0,\bI_n)$ is independent of $\E_j[\brho]$ and
    $\E[(Z^2-s^2)^2]= 2 s^4$ if $Z\sim N(0,s^2)$.

    For the non-diagonal terms we compute $\E[\chi_j\chi_k]$
    using \Cref{lemma:f-g-z_j-z_k} with
    $f^{(j,k)}(\bz_k)=\E_j[\brho]$ and $h^{(j,k)}(\bz_j)=\E_k[\brho]$
    conditionally on $(\bz_l)_{l\notin \{j,k\}}$.
    Thanks to $\|\brho\|\le 1$ this gives
    \begin{align*}
        \E[
        \chi_j\chi_k]
    \le
    \Cr{C10} \E \sum_{i=1}^n
    \pb{\Bigl[}
    \|\frac{\partial \E_k[\brho]}{\partial x_{ij} } \|^2
    + \|\frac{\partial \E_j[\brho]}{\partial x_{ik} } \|^2
    \pb{\Bigr]}
    &\le 
    \Cr{C10} \E \sum_{i=1}^n
    \pb{\Bigl[}
    \E_k[\|\frac{\partial \brho}{\partial x_{ij} } \|^2]
    +\E_j[ \|\frac{\partial \brho }{\partial x_{ik} } \|^2]
    \pb{\Bigl]}
    ,
    \end{align*}
    where the second inequality follows by dominated convergence
    for the conditional expectation
    (i.e., $(\partial/\partial x_{ij})\E_k[\brho] = \E_k[(\partial/\partial x_{ij})\brho]$ almost surely)
    and Jensen's inequality.
    Finally, summing over all pairs $j\ne k$ we find
    $$
        \sum_{j=1}^p
        \Bigl(
        \sum_{k=1, k\ne j}^p
        \E[
        \chi_j\chi_k]
        \Bigr)
        \le \Cr{C10} 
        \E\Bigl[
        \Bigl(
        p
        \sum_{j=1}^p
        \sum_{i=1}^n
        \|\frac{\partial \brho }{\partial x_{i\pb{j}} } \|^2
        \Bigr)
        +
        \Bigl(
        p
        \sum_{k=1}^p
        \sum_{i=1}^n
        \|\frac{\partial \brho }{\partial x_{i\pb{k}} } \|^2
        \Bigr)
        \Bigr]
        =
        2p\Cl{C10} 
        \E
        \sum_{j=1}^p
        \sum_{i=1}^n
        \|\frac{\partial \brho }{\partial x_{i\pb{j}} } \|^2
        .
    $$
    
    (iv)
    For the last term, using
    $|\|\E_j[\brho]\|^2 - \|\brho\|^2| \le \|\E_j[\brho]-\brho\| ~ \|\E_j[\brho]+\brho\|$
    and the Cauchy-Schwarz inequality we find
    \begin{align*}
        \E\Big|\sum_{j=1}^p
        \|\E_j[\brho]\|^2 - \|\brho\|^2
        \Big|
    &\le
    \E\Big[ \sum_{j=1}^p \|\E_j[\brho]-\brho\|^2\Big]^{\frac12}
    \E\Big[ \sum_{j=1}^p \|\E_j[\brho]+\brho\|^2\Big]^{\frac12}
    \le
\E\Big[ \sum_{j=1}^p\sum_{i=1}^n \|\frac{\partial\brho}{\partial x_{ij}}\|^2 \Big]^{\frac12}
    \sqrt{4 p}
    \end{align*}
    where the second inequality follows from $\|\brho\|\le 1$ and
    the Gaussian Poincar\'e inequality
    \cite[Theorem 3.20]{boucheron2013concentration}
    with respect to $\bz_j=\bX\be_j$ conditionally
    on $(\bz_k)_{k\in[p]\setminus\{j\}}$, which gives
    $\E_j[(\rho_l - \E_j[\rho_l])^2] \le 
    \E_j\sum_{i=1}^n (\frac{\partial \rho_l}{\partial x_{ij}})^2
    $ for every $l=1,...,n$.
\end{proof}

\begin{proof}[Proof of \Cref{big_cor}]
    By the product rule,
    $\frac{\partial}{\partial x_{ij}}[\brho\bfeta^\top]
    =
    \brho\rho_i \be_j^\top
    +
    (
    \frac{\partial\brho}{\partial x_{ij}}
    ) \brho^\top\bX
    +
    \brho
    [
    \frac{\partial\brho}{\partial x_{ij}}
    ]^\top\bX^\top$
    for $\bfeta=\bX^\top\brho$, 
    so that
    $
    \|\frac{\partial}{\partial x_{ij}}
    [\brho\bfeta^\top]
    \|_F
    \le |\rho_i| \|\brho\|
    + 2\|\bX\|_{op}\|\brho\| 
    \|\frac{\partial\brho}{\partial x_{ij}}\|
    $.
    Using $(a+b)^2\le 2a^2+2b^2$ and summing over $i\in[n],j\in[p]$,
    we find that
    the right-hand side of \eqref{eq:to-evaluate-SOS} satisfies
    $$
    \|\brho\|^2\|\bX\bfeta\|^2
    +\sum_{i=1}^n\sum_{j=1}^p
    \|\frac{\partial(\brho\bfeta^\top)}{\partial x_{ij}}\|_F^2
    \le 
    \|\bX\|_{op}^2\|\brho\|^4
    +
    2 p \|\brho\|^4
    + 4 \|\bX\|_{op}^2 \|\brho\|^2 
    \sum_{i=1}^n\sum_{j=1}^p
    \|\frac{\partial\brho}{\partial x_{ij}}\|^2
    .
    $$
    By \Cref{lemma:Davidson},
    $\E[\|\bX\|_{op}^2]\le (\sqrt p + \sqrt n)^2 + 1$.
    Thus if $\|\brho\|\le 1$ a.s., the right-hand side of \eqref{eq:to-evaluate-SOS}
    is bounded from above by an absolute constant times
    $RHS^2$ where $RHS$ is defined in 
    \Cref{big_cor}. By the Cauchy-Schwarz inequality to lower bound
    the left-hand side of
    \eqref{eq:to-evaluate-SOS}, 
    \begin{equation*}
    \E\Big|\|\bX^\top\brho\|^2 - p \|\brho\|^2 
    -
    \sum_{i=1}^n\sum_{j=1}^p 
    \Bigl[
    \be_j^\top\bX^\top\brho \frac{\partial \rho_i}{\partial x_{ij}}
    +
    \rho_i\be_j^\top\bX^\top
    \frac{\partial \brho}{\partial x_{ij}}
    \Bigr]
    \Big|
    =
    \E\Big|\|\bX^\top\brho\|^2 -\sum_{i=1}^n\sum_{j=1}^p \frac{\partial[\rho_i \eta_j]}{\partial x_{ij}} \Big|
    \le \C RHS.
    \end{equation*}
    By definition of $RHS$ and expanding the square in
    \eqref{novel-inequality-chi2} we also have
    $$
    \E\Big| p \|\brho\|^2  - \|\bX^\top\brho\|^2
    +2
    \sum_{i=1}^n\sum_{j=1}^p 
    \be_j^\top\bX^\top\brho \frac{\partial \rho_i}{\partial x_{ij}}
    -
    \sum_{j=1}^p 
    \Bigl(
    \sum_{i=1}^n
    \frac{\partial \rho_i}{\partial x_{ij}}
    \Bigr)^2
    \Big|
    \le \C RHS.
    $$
    If the left-hand sides of the two previous displays are written as
    $\E|U|$ and $\E|V|$ then the left-hand side of
    \eqref{eq:new_cor_second} is exactly $\E|U+V|$
    (cancelling out
    $\|\bX^\top\brho\|^2 - p \|\brho\|^2$)
    and the left-hand side of
    \eqref{eq:new_cor_first} is exactly $\E|2U+V|$
    (cancelling out
    $2
    \sum_{i=1}^n\sum_{j=1}^p 
    \be_j^\top\bX^\top\brho \frac{\partial \rho_i}{\partial x_{ij}}$
    ).
    
\end{proof}




\newpage
\section*{SUPPLEMENT}

\section{Rotational invariance of
regularized least-squares}

The crux of the following proposition is the rotational
invariance of the design. As in the rest of the paper,
let $(\bX,\bep)$ be independent such that
$\bX$ has iid $N(\mathbf{0},\bSigma)$ rows.
Consider $\bR\in O(n)$ a random rotation
distributed according to the Haar measure
(i.e., such that $\bR\bu$ is uniformly distributed on the sphere
for any deterministic $\bu$ with $\|\bu\|=1$).
Then by rotational invariance of the Gaussian measure of $\bX$,
$(\tbX,\tbep) \defas (\bR\bX,\bR\bep)$ is such that
$\tbX$ has iid $N(\mathbf{0},\bSigma)$ rows
and $\tbep$ is independent of $\bX$ with the uniform distribution
on the sphere of radius $\|\bep\|$.
If $\hbbeta$
is a penalized $M$-estimator with square loss
as in \eqref{hbbeta-squared-loss} then 
$\hbbeta(\bep + \bX\bbeta,\bX)
=\hbbeta(\bR\bep + \bR\bX\bbeta, \bR\bX)$
because $\|\bR(\by-\bX\bb)\|^2 = \|\by-\bX\bb\|\pb{^2}$ for all $\bb\in\R^p$.
Thus the distribution of $\hbbeta$ is unchanged if the noise $\bep$
is replaced by $\tbep = \|\bep\|_2 \bv$ where $\bv$ is uniformly
distributed on the sphere of radius $1$ and
independent of $(\bX,\|\bep\|)$.

\begin{proposition}
    \label{prop:eta_1}
    Set $\rho(u)=u^2/2$.
    Let $\hbbeta$ in \eqref{hbbeta-squared-loss} and
    $\df$ in \eqref{eq:def-df}.
    Assume that $\by$ has continuous distribution with respect to
    the Lebesgue measure.
    Assume that $\bX$ has iid $N({\bf 0,\bSigma})$ rows and
    that $\bep$ is independent of $\bX$. Then
    $\E[\xi^2]\le \C / n$ for
    \begin{align}
        \xi
        =
        \frac{(\by-\bX\hbbeta)^\top\bep - \|\bep\|^2(1-\df/n)}{
            (
            \|\bep\|^4
            +
            \|\bep\|^2\|\by-\bX\hbbeta\|^2
            )^{1/2}
    }
        .
        \label{xi}
    \end{align}
\end{proposition}
Consequently,
the random variable $\Rem$
defined just before \eqref{squareloss_RHS-sum_with_REM} satisfies
$$
|\Rem| = (n-\df)
\frac{
    |(\by-\bX\hbbeta)^\top\bep - \|\bep\|^2(1-\df/n)|
}
{
    \|\by-\bX\hbbeta\|^2 + n \|\bSigma^{1/2}\bh\|^2
}
=
|\xi|
(n-\df)
\frac{
    (\|\bep\|^4 + \|\bep\|^2\|\by-\bX\hbbeta\|^2)^{1/2}
}
{
    \|\by-\bX\hbbeta\|^2 + n \|\bSigma^{1/2}\bh\|^2
}
$$
and using $\|\bep\|\le \|\by-\bX\hbbeta\| + \|\bX\bSigma^{-1/2}\|_{op}
\|\bSigma^{1/2}\bh\|$ to bound the rightmost numerator,
\begin{equation}
\E|\Rem|
\le \C n
\E[\xi^2]^{1/2}
(1+\E[\|n^{-1/2}\bX\bSigma^{-1/2}\|_{op}^4]^{1/2})
\le \C(\gamma) \sqrt n
\label{eq:bound-Rem-noise}
\end{equation}
if $p/n\le \gamma$ by \Cref{lemma:Davidson}.

\begin{proof}[Proof of \Cref{prop:eta_1}]
    As explained above the proposition, let $\bR\in O(n)$
    be a random rotation independent of $(\bep,\bX)$, so that
    $(\bR\bX,\|\bep\|)$ is independent of $\bz\defas \bR\bep /\|\bep\|$.
    Conditionally on $(\bR\bX,\|\bep\|)$, define the function
    $\bff:\R^n\to\R^n$ by
    $\bff(\bz) \defas \bz-\|\bep\|^{-1}\bR\bX[\hbbeta(\|\bep\|\bR^\top\bz + \bX\bbeta,\bX)-\bbeta]$
    so that $\bff(\bz)=\|\bep\|^{-1}\bR(\by-\bX\hbbeta)$
    and $\|\bff(\bz)\| = \|\bep\|^{-1}\|\by-\bX\hbbeta\|$.
    Then dividing the numerator and denominator by $\|\bep\|^2$ in
    \eqref{xi},
    $$
    \xi = \frac{\bff(\bz)^\top\bz  - (1-\df/n)}{
        \sqrt{ 1+ \|\bff(\bz)\|^2}
    }.
    $$
    With $\bH = (\partial/\partial\by)(\bX\hbbeta(\by,\bX))$ holding $\bX$ fixed, viewing $\by = \|\bep\|\bR^\top\bz + \bX\bbeta$
    as a function of $\bz$ we have
    by the chain rule
    $\nabla \bff(\bz)^\top = \bI_n - \bR\bH\bR^\top$.
    Since $\bH$ is almost surely positive semi-definite
    with eigenvalues in $[0,1]$ by \Cref{prop:jacobian-psd-htheta},
    this proves that $\bff$ is 1-Lipschitz.
    By the chain rule,
    $\tbf(\bz) \defas \bff(\bz)/\sqrt{1+\|\bff(\bz)\|^2}$ 
    has Jacobian
    \begin{equation}
    \nabla \tbf(\bz)^\top
    = \frac{\nabla \bff(\bz)^\top}{\sqrt{1+\|\bff(\bz)\|^2}}
    - \frac{\bff(\bz)\bff(\bz)^\top}{(1+\|\bff(\bz)\|^2)^{3/2}}
    \label{tbf-jac}
    \end{equation}
    which has operator norm bounded by 2, hence $\tbf(\bz)$ is 2-Lipschitz.
    Since $\bz$ is uniformly distributed on the sphere,
    with $\bP_{\bz} = (\bI_n - \bz\bz^\top)$, \cite[Lemma B.1]{bellec2021asymptotic} shows
    \begin{align*}
    \E[(n \tbf(\bz)^\top\bz - \trace[\nabla \tbf(\bz)^\top\bP_{\bz}])^2]
    &\le n \E[\|\tbf(\bz)\|^2] + (1-\tfrac{2}{n})^{-1}\E[\|\nabla \tbf(\bz)^\top\bP_{\bz}\|_F^2]
  \\&\le \C n.
    \end{align*}
    Since 
    $\trace[\nabla f(\bz)^\top]=n-\df$ and
    $|\trace[\nabla \tbf(\bz)^\top\bP_{\bz}]-(1+\|\bff(\bz)\|^2)^{1/2}(n-\df)|
    \le
    \C
    $ a.s. by \eqref{tbf-jac},
    \begin{align*}
        \E[(n\xi)^2]
        &=
    \E[(
    n\tbf(\bz)^\top\bz - (1+\|\bff(\bz)\|^2)^{-1/2}\trace[\nabla \bff(\bz)^\top])^2]
      \\&\le
    2\E[(n \tbf(\bz)^\top\bz - \trace[\nabla \tbf(\bz)^\top\bP_{\bz}])^2]
    + 2
    \E[
    |\trace[\nabla \tbf(\bz)^\top\bP_{\bz}]-(1+\|\bff(\bz)\|^2)^{-1/2}(n-\df)|^2
    ]
      \\&\le \C n + \C.
    \end{align*}
\end{proof}

\section{Proofs of auxiliary results}

\subsection{Proof of some properties of the Jacobian
    \texorpdfstring{$\bV=(\partial/\partial\by)\hbpsi$}{of psi}
}
\label{sec:proof-jacobian-psd}

\PropPsiPsd*

The following lemma is useful
to prove \Cref{prop:jacobian-psd-htheta}.
\begin{lemma}
    \label{lemma:jacobian-psd-htheta-FULL}
    Assume that $\rho$ is convex differentiable  and that $\psi=\rho'$ is 1-Lipschitz.
    Then $\rho(u)=\min_{v\in\R}\{(u-v)^2/2 + h(v) \}$  for some convex
    function $h$. Consider 
    \begin{equation}
        \label{eq:hbb-htheta}
    (\hbb,\hbtheta) \in \argmin_{\bb\in\R^p,\btheta\in\R^n}
    \|\by-\bX\bb - \btheta\|^2/(2n) + g(\bb) + \sum_{i=1}^n h(\theta_i)/n.
    \end{equation}
    Then for every solution $\hbbeta$ to the optimization problem
    \eqref{M-estimator-rho},
    there exists a solution
    $(\hbb,\hbtheta)$ to the optimization problem \eqref{eq:hbb-htheta}
    such that
    $\hbbeta=\hbb$ and $\psi(\by-\bX\hbbeta) = \by-\bX\hbb-\hbtheta$.
\end{lemma}

\begin{proof}[Proof of Proposition~\ref{lemma:jacobian-psd-htheta-FULL}]
    If $\psi=\rho'$ is 1-Lipschitz 
    then $f(u) =  u^2/2 - \rho$ is convex
    and 1-smooth (in the sense that $f'$ is again 1-Lipschitz),
    so that its Fenchel conjugate $f^*(v) = \max_{u\in\R} \{ uv - f(u) \}$
    is 1-strongly convex (in the sense that $v\mapsto f^*(v) - v^2/2$ is convex).
    Let $h(v) = f^*(v) - v^2/2$.
    For this choice of $h$, we have thanks to $f^{**} = f$
    $$\min_{v\in\R} \big\{ \frac{(u-v)^2}{2} + h(v) \big\}
    = \frac{u^2}{2} - \max_{v\in\R} \{uv - f^*(v)\} = \frac{u^2}{2}
    - \big(\frac{u^2}{2} - \rho(u)\big) = \rho(u).
    $$
    If $\rho$ is the Huber loss \eqref{eq:Huber-loss} this construction
    was already well studied and in this case  $h(v)=|v|$,
    see for instance \cite[Section 6]{donoho2016high} or
    \cite{dalalyan2019outlier} and the references therein.

    Next consider the $M$-estimator with square loss
    and design matrix $[\bX | \bI_n]\in\R^{n\times (p+n)}$ defined by
    \eqref{eq:hbb-htheta}.
    The KKT conditions are given by
    \begin{equation}
        \label{eq:KKT-A1}
    \bX^\top(\by - \bX\hbb - \hbtheta)\in n \partial g(\hbb),
    \qquad
    y_i-\bx_i^\top\hbb - \htheta_i \in \partial h(\htheta_i), \qquad i\in[n]
    \end{equation}
    where $\partial g$ and $\partial h$ denote the subdifferentials of $g$ and $h$. That is,
    $(\hbb,\hbtheta)$  is solution to \eqref{eq:hbb-htheta}
    if and only if \eqref{eq:KKT-A1} holds.
    We claim that one solution of the optimization problem
    \eqref{eq:hbb-htheta} is given by
    $(\hbb,\hbtheta) = (\hbbeta,\by-\bX\hbbeta - \psi(\by-\bX\hbbeta))$
    where $\hbbeta$ is any solution in \eqref{M-estimator-rho}.
    Indeed, the first part in \eqref{eq:KKT-A1}
    holds by the optimality conditions $\bX^\top\psi(\by-\bX\hbbeta)\in n \partial g(\hbbeta)$ of $\hbbeta$ as a solution to the optimization problem
    \eqref{M-estimator-rho}; it remains to check 
    that $y_i-\bx_i^\top\hbb - \htheta_i\in\partial h(\htheta_i)$
    holds for all $i\in[n]$, or equivalently
    that 
    \begin{equation}
        \label{eq:previous-display-psi-h-i}
    \psi(y_i-\bx_i^\top\hbbeta)\in \partial h(
        y_i-\bx_i^\top\hbbeta - \psi(y_i-\bx_i^\top\hbbeta)
    )
    \end{equation}
    by definition of $\htheta_i$.
    By additivity of the subdifferential,
    $v + \partial h(v)=\partial f^*(v)$.
    Furthermore $u\in\partial f^*(v)$ if and only 
    if $f^{**}(u) + f^*(v) = uv$ by property of the Fenchel conjugate,
    where here we have
    $f^{**}(u)=f(u)=u^2/2 - \rho(u)$ since here $f$ is convex and finite valued.
    We also have $v\in \partial f(u)$
    iff $f(u) + f^*(v) = uv$,
    and here $\partial f(u) = \{u - \psi(u)\}$ is a singleton.
    Combining these pieces together, for any $u,v\in\R$ we find
    \begin{align*}
        v= u - \psi(u)
     &\text{ iff }
        v\in \partial f(u) 
     \\&\text{ iff }
        f(u) + f^*(v) = uv
     \\&\text{ iff }
     f^{**}(u) + f^*(v) = uv
     \\&\text{ iff }
        u \in \partial f^*(v)
     \\&\text{ iff }
        u - v \in\partial h(v).
    \end{align*}
    Hence taking $u=y_i - \bx_i^\top\hbbeta$
    and $v=u-\psi(u)$, the previous sentence implies that
    $\psi(u)\in \partial h(u-\psi(u))$
    and the previous display \eqref{eq:previous-display-psi-h-i} must hold for all $i\in [n]$.
    This proves that the given $(\hbb,\hbtheta)$ is solution
    to \eqref{eq:hbb-htheta}.
\end{proof}
\begin{proof}[Proof of \Cref{prop:jacobian-psd-htheta}]
    By \cite[Proposition J.1]{bellec_zhang2019second_poincare}
    applied to $(\hbb,\hbtheta)$ with design matrix
    $[\bX|\bI_n]$, the map
    $\by\mapsto \by-\bX\hbb - \hbtheta$ is 1-Lipschitz on $\R^n$,
    and for almost every $\by\in\R^n$ this map has
    symmetric positive semi-definite Jacobian.
    Since $\by-\bX\hbb - \hbtheta = \psi(\by-\bX\hbbeta)$,
    this proves the first bullet point
    of Proposition~\ref{prop:jacobian-psd-htheta}.

    For the second bullet point, under \Cref{assum:g}(iii.a)
    the claim is proved in \Cref{prop:Huber-Lasso}.
    Under \Cref{assum:g}(i) or (ii),
    by \Cref{lemma:ninja-variant}
    and \eqref{eq:chain-rule-hard} we have
    $\bV = (\partial/\partial\by)\hbpsi=
    \bD(\bI_n - (\partial/\partial\by)\bX\hbbeta)
    =
    \bD(\bI_n-\bX\hbA\bX^\top\bD)$
    where $\bD=\diag(\bpsi')$.
    If $\bH = (\partial/\partial\by)(\bX\hbbeta) = \bX\hbA\bX^\top\bD$,
    we bound $\df=\trace\bH$ as follows:
    if $\bD^\dagger$ denotes the pseudo-inverse of $\bD$,
    using the commutation property of the trace and $\bD=\bD\bD^\dagger\bD$,
    $$|\hat I| - \df=
    \trace[\bD^\dagger \bD - \bH]
    =
    \trace[\bD^\dagger \bD - \bX\hbA\bX^\top\bD\bD^\dagger\bD]
    = \trace[
    (\bD^\dagger)^{1/2} \bV (\bD^\dagger)^{1/2}
    ]\ge 0
    $$
    where the last inequality is thanks to $\bV$ being symmetric psd.
    This proves $\df\le |\hat I|$.
\end{proof}

\subsection{Elastic-Net penalty and Huber Lasso}
\label{proof:ENet-Huber-Lasso}

\begin{proof}[Proof of Proposition~\ref{prop:specific-ENet}]
    The KKT conditions read
    $\bX^\top\hbpsi - n\mu\hbbeta \in n\lambda \partial \|\hbbeta\|_1$ 
    where $\partial \|\bb\|_1$ denotes the sub-differential of the $\ell_1$ norm at $\bb\in\R^p$.
    We first prove that the KKT conditions hold strictly
    with probability one, in the sense that
    $$\P(\forall j\in[p], ~ j\notin\hat S \text{ implies }\be_j^\top\bX^\top\hbpsi
    \in (-n\lambda,n\lambda) )=1.$$
    Let $j_0$ be fixed and let $\hbalpha$ be the solution
    to the same optimization problem as $\hbbeta$,
    with the additional constraint 
    that the $j_0$-th coordinate is always set to 0.
    Then $\{j_0\not \in \hat S\} = \{\hbalpha = \hbbeta \}$
    as the solution of each optimization problem is unique
    thanks to $\mu>0$.
    Let $\bX_{-j_0}$ be $\bX$ with $j_0$-th column removed.
    The conditional distribution of
    $\bX\be_{j_0}$ given $(\bX_{-j_0},\by)$ is continuous
    because $(\bX,\by)$ has continuous distribution.
    Hence
    $\be_{j_0}^\top \bX^\top\psi(\by - \bX\hbalpha)$ also has continuous
    distribution conditionally on $(\bX_{-j_0},\by)$ when
    $\psi(\by - \bX\hbalpha)\ne \mathbf{0}$, so that
    $\P(\be_{j_0}^\top \bX^\top\psi(\by - \bX\hbalpha) \in \{-\lambda n, \lambda n\} | \bX_{-j_0},\by) = 0$
    because a continuous distribution has no atom.
    The unconditional probability is also 0 by the tower property.
    This shows that
    $\P(j_0\not\in \hat S \text{ and }
    \be_{j_0}^\top\bX^\top\hbpsi
    \in \{-n\lambda,n\lambda\} )=0$ for all $j_0$. The union
    bound over all $j_0\in[p]$ proves that the KKT conditions
    hold strictly with probability one, as desired.
    \footnote{
    Similar arguments to prove that the KKT conditions hold
    strictly are used in \cite[Proposition 3.9]{bellec_zhang2018second_stein}
    for the Lasso
    or \cite[Lemma L.1]{bellec_zhang2019second_poincare} for the Group-Lasso.
    The above argument is provided for completeness.}

    The maps $(\by,\bX)\mapsto\hbbeta$ and $(\by,\bX)\mapsto \hbpsi$
    are Lipschitz continuous on every compact
    by Proposition~\ref{prop:constant-bv-e_j}(i) as $\bSigma$ is invertible.
    At a point $(\by_0,\bX_0)$ where the KKT conditions hold
    strictly, the KKT conditions stay strict and $\hat S$
    stay the same in a neighborhood of $(\by_0,\bX_0)$
    because the continuity of 
    $(\by,\bX) \mapsto \be_j^\top\bX^\top \hbpsi - n \mu\hbeta_j$
    ensure that $\be_j^\top\bX^\top \hbpsi - n \mu\hbeta_j$
    stay bounded away from $\{-n\lambda, n\lambda\}$
    for every $j\in[p]$ not in the active set at $(\by_0,\bX_0)$.
    Furthermore,
    by \eqref{lipschitz-fundamental-inequality} there exists an open
    set $U\subset \R^n\times \R^{n\times p}$
    with $U\ni(\by_0,\bX_0)$ such that the maps $(\by,\bX)\mapsto\hbbeta$
    and $(\by,\bX)\mapsto \hbpsi$ are Lipschitz in $U$, and
    the chain rule
    \eqref{eq:chain-rule-hard}
    yields
    $(\partial/\partial\by)\hbpsi
    =
    \diag(\bpsi')(\bI_n - \bX(\partial/\partial\by)\hbbeta)
    $ for almost every $(\by,\bX) \in U$.
    In a neighborhood of a point $(\by_0,\bX_0)$ where the KKT conditions
    hold strictly and where the aforementioned chain rule holds,
    since $\hat S$ is locally constant we have
    $(\partial/\partial\by) \hbbeta_{\hat S^c} = \mathbf{0}_{\hat S^c\times [n]}$
    as well as
    $$\bX_{\hat S}^\top\diag(\bpsi')[\bI_n -\bX(\partial/\partial \by)\hbbeta]
    - n \mu (\partial/\partial \by)\hbbeta
    = \mathbf{0}_{\hat S\times [n]}.$$
    By simple algebra, this implies
    $(\partial/\partial \by)\hbbeta_{\hat S}
    = (\bX_{\hat S}^\top\diag(\bpsi') \bX_{\hat S} + \mu n\bI_{|\hat S|})^{-1}
    \bX_{\hat S}^\top \diag(\bpsi')
    $ and the desired expressions for
    $(\partial/\partial\by)\bX\hbbeta$
    and
    $(\partial/\partial\by)\hbpsi$.

\end{proof}

\begin{proof}[ Proof of Proposition~\ref{prop:Huber-Lasso} ]
    For the Huber loss with $\ell_1$-penalty, the M-estimator $\hbbeta$
    satisfies
    $$(\hbbeta,\hbtheta)=\argmin_{(\bb,\btheta)\in\R^p\times \R^n}
    \|\bX\bb + \kappa \btheta - \by\|^2/(2n) + \lambda(\|\bb\|_1 + \|\btheta\|_1)
    $$
    where $\kappa>0$ is some constant,
    see e.g. \cite{dalalyan2019outlier} and the references therein
    or Proposition~\ref{lemma:jacobian-psd-htheta-FULL}
    with $h(\cdot)$ proportional to $|\cdot|$ for the Huber loss.
    Let $\bbetabar=(\hbbeta,\hbtheta)$. Then $\bbetabar$
    is a Lasso solution with data $(\by,\bXbar)$
    where the design matrix is $\bXbar = [\bX|\kappa\bI_n] \in \R^{n\times (n+p)}$.

    In this paragraph, we show that if $\bX$ has continuous distribution
    then
    $\bXbar$ satisfies Assumption 3.1 of \cite{bellec_zhang2018second_stein}
    with probability one.
    That assumption requires that for any $(\delta_j)_{j\in[p+n]}
    \{-1,+1\}^{p+n}$ and any columns $\bc_{j_1},...,\bc_{j_{n+1}}$
    of $\bXbar$ with $j_1<...<j_{n+1}$, the matrix
    \begin{equation}
        \label{eq:matrix-intertible-probability-one}
    \begin{pmatrix}
        \bc_{j_1} & \dots & \bc_{j_{n+1}} \\
        \delta_{j_1} & \dots & \delta_{j_{n+1}}
    \end{pmatrix}
    \in\R^{(n+1) \times (n+1)}
    \end{equation}
    has rank $n+1$.
    We reorder the columns so that any column of the form
    $\bc_{p+i},i\in[n]$ is the $i$-th column after reordering,
    and note that $\bc_{p+i} = \kappa \be_i$.
    Then there exists a value of $\bX\in\R^{n\times p}$ such that the above matrix,
    after reordering the columns,
    is equal to
    $$
    \begin{pmatrix}
        \kappa \bI_n  &|& \mathbf{0}_{n\times 1}  \\
        \delta_{k_1} ~ \dots ~ \delta_{k_{n}} &| & \delta_{k_{n+1}}
    \end{pmatrix}
    $$
    for some permutation $(k_1,...,k_{n+1})$ of $(j_1,...,j_{n+1})$.
    Since the previous display has nonzero determinant
    $\kappa^n\delta_{k_{n+1}}$,
    the determinant of matrix
    \eqref{eq:matrix-intertible-probability-one},
    viewed as a polynomial of the coefficients of $\bX$,
    is a non-zero polynomial. Since non-zero polynomials
    have a zero-set of Lebesgue measure 0 \cite{math_stackexchange920302},
    this proves that \eqref{eq:matrix-intertible-probability-one}
    is rank $n+1$ with probability one.

    Hence with probability one,
    by Proposition 3.9 in \cite{bellec_zhang2018second_stein},
    the solution $\bbetabar\in\R^{n+p}$ is unique,
    $\|\bbetabar\|_0\le n$
    and the KKT conditions of the optimization problem of $\bbetabar$
    hold strictly almost everywhere in $(\by,\bX)$
    (see \cite{tibshirani2013lasso} for related results).
    This shows that the sets $\hat S$ and $\hat I$,
    viewed as a function of $\by$ while $\bX$ is fixed,
    are constant in a neighborhood of $\by$ for almost every $(\by,\bX)$.
    Now the set of $\{i\in[n]:\htheta_i\ne 0\}$ exactly correspond 
    to the outliers $\{i\in[n]:\psi'(y_i-\bx_i^\top\hbbeta)=0\}=[n]\setminus \hat I$ 
    and 
    $\|\bbetabar\|_0\le n$ holds if and only if $|\hat S| + (n-|\hat I|) \le n$.
    This proves that  $|\hat S|\le |\hat I|$ almost surely.
    Furthermore, almost surely in $(\by,\bX)$,
    the derivative of $\by\mapsto \bXbar\bbetabar = \bX\hbbeta + \kappa\hbtheta$ exists
    and is equal to the orthogonal projection onto the linear span
    of $\{\be_i, i\in[n]\setminus \hat I\} \cup \{ \bX\be_j, j\in \hat S \}$.
    We construct an orthonormal basis of this linear span as follows:
    First by considering the vectors $\{\be_i,i\in [n]\setminus \hat I\}$
    and then completing by a basis $(\bu_k)_{k\in \hat S}$
    of the orthogonal complement of $\{\be_i,i\in [n]\setminus \hat I\}$.
    Note that this orthogonal complement is exactly the column span
    of $\pb{\diag(\bpsi')}\bX_{\hat S}$.
    The orthogonal projection onto the linear span of
    $\{\be_i, i\in[n]\setminus \hat I\} \cup \{ \bX\be_j, j\in \hat S \}$
    is thus
    $(\partial/\partial\by)\bXbar\bbetabar = \sum_{i\in[n]\setminus \hat I} \be_i\be_i^\top + \sum_{k\in\hat S} \bu_k\bu_k^\top$.
    Since $\diag(\bpsi')$ is constant in a neighborhood of $\by$ and
    $\diag(\bpsi')$ zeros out all rows corresponding to outliers,
    \begin{align*}
    \diag(\bpsi')(\partial/\partial\by)\bX\hbbeta
    &=
    (\partial/\partial\by) \diag(\bpsi')\bX\hbbeta
  \\&=
    (\partial/\partial\by) \diag(\bpsi')\bXbar\bbetabar
  \\&=
    \diag(\bpsi')(\partial/\partial\by)\bXbar\bbetabar
    =
    \textstyle
    \sum_{k\in \hat S}\bu_k \bu_k^\top 
    \end{align*}
    which is exactly the orthogonal projection $\hbQ$ defined in the proposition,
    as desired.
    The almost sure identity
    $(\partial / \partial \by )\hbpsi = \diag(\bpsi') - \hbQ$
    is obtained by the chain rule: Here
    $\psi$ is differentiable at $y_i-\bx_i^\top\hbbeta$
    for all $i\in[n]$ with probability one
    since the fact that the KKT conditions of $\bbetabar$
    hold strictly imply that no $y_i-\bx_i^\top\hbbeta$
    is a kink of $\psi$.
\end{proof}

\section{Lasso: Lipschitz conditions}
\label{sec:appendix_lasso}
\begin{lemma}[deterministic argument]
    \label{lemma:Xbar}
Let $n,\bar p\ge 1$ be integers and $m\in [\bar p]$.
Let $\bA \in\R^{n\times \bar p}$,
and define the Lasso
$\hbb =\argmin_{\bb\in\R^{\bar p}} \|\bA\bb-\bybar\|^2/(2n) + \frac{\lambda}{\sqrt n}
\|\bb\|_1$
where $\bybar = \bA\bb^* + \bz$ for some $\bb^*\in\R^{\bar p}$.
Then
\begin{equation}
\|\hbb\|_0
\le \phi_{\max}(\bA^\top\bA)
\max\Bigl\{ \frac{2 \|\bz\|^2}{\lambda^2 n}
    , \frac{4\|\bb^*\|_0}{n \kappa^2}
\Bigr\}
\quad
\text{ where }
\quad
\kappa^2 = \inf_{\bb\in\R^{\bar p}:
    \|\bb\|_1 < \|\bb^*\|_1
    }
    \Bigl[\frac{\|\bA(\bb-\bb^*)\|^2}{n\|\bb-\bb^*\|^2}\Bigr].
    \label{eq:upper-bound-sparsity-hbb}
\end{equation}
Let $\tbA\in\R^{n\times \bar p}$
and
$\tbb =\argmin_{\bb\in\R^{\bar p}} \|\tbA\bb-(\tbA\bb^* + \bz)\|^2/(2n) + \frac{\lambda}{\sqrt n}
\|\bb\|_1$. Then for any psd $\bSigmabar\in\R^{\bar p\times \bar p}$,
\begin{multline}
    \label{eq:multiline-lasso-uberlasso-lipschitz}
\min\Bigl\{1,
    \tfrac{\|\tbA(\hbb-\tbb)\|^2}{
        \|\bSigmabar{}^{1/2}(\hbb-\tbb)\|^2
    }
\Bigr\}
\max\Bigl\{
    \|\bSigmabar^{1/2}(\hbb-\tbb)\|,
    \|\bA\hbb-\tbA\tbb\|
\Bigr\}
\\\le
\|(\tbA-\bA)\bSigmabar{}^{-1/2}\|_{op}
\bigl[
\|\bybar-\bA\hbb\|
+\|\bSigmabar{}^{1/2}(\hbb-\bb^*)\|
(1+2\|\tbA\bSigmabar{}^{-1/2}\|_{op})
\bigr]
\end{multline}
\end{lemma}
\begin{proof}
    The KKT conditions read $\bA^\top(\bybar-\bX\hbb) = \lambda\sqrt n
    \partial\|\hbb\|_1$. Multiplying the KKT conditions by
    $\hbb-\bb^*$ we obtain
    $$
    \|\bA(\hbb-\bb^*)\|^2
    +
    \|\bA\hbb - \bybar\|^2
    \le
    \|\bz\|^2
    + 2 \sqrt n \lambda(\|\bb^*\|_1 - \|\hbb\|_1).
    $$
    We distinguish two cases, based on which of the two terms in the
    right-hand side is greater. If $\|\bz\|^2\ge 2 \sqrt n \lambda(\|\bb^*\|_1 - \|\hbb\|_1)$ then we find
    $\|\bA\hbb - \bybar\|^2\le 2 \|\bz\|^2$, and using the KKT conditions
    gives
    $\|\hbb\|_0\le \frac{1}{\lambda^2n}\|\bA^\top(\bybar-\bA\hbb)\|^2
    \le \phi_{\max}(\bA^\top\bA) 2 \|\bz\|^2 / (\lambda^2n)$.
    Otherwise, we have
    $\|\bz\|^2< 2 \sqrt n \lambda(\|\bb^*\|_1 - \|\hbb\|_1)$
    and
    $$
    \|\bA(\hbb-\bb^*)\|^2
    +
    \|\bA\hbb - \bybar\|^2
    < 4\sqrt n \lambda(\|\bb^*\|_1 - \|\hbb\|_1)
    \le 4 \sqrt{n} \lambda\|\bb^*\|_0^{1/2}\|\bb^*-\hbb\|
    \le 4 \lambda \|\bb^*\|_0^{1/2}\kappa^{-1} \|\bA(\hbb-\bb^*)\|.
    $$
    Using $4uv\le 4u^2+v^2$ for $v=\|\bA(\hbb-\bb^*)\|$, the term $v^2$
    cancel out and
    $\|\bybar-\bA\hbb\|^2\le 4\lambda^2\|\bb^*\|_0 \kappa^{-2}$.
    Using again
    $\lambda^2n\|\hbb\|_0\le\|\bA^\top(\bybar-\bA\hbb)\|^2
    \le \phi_{\max}(\bA^\top\bA)\|\bybar-\bA\hbb\|^2$
    completes the proof of \eqref{eq:upper-bound-sparsity-hbb}.
    By the same argument as the proof of \Cref{prop:lipschtiz2-gamma-less-one} with $\mu_\rho=1$
    (square loss) we 
    obtain \eqref{eq:multiline-lasso-uberlasso-lipschitz}.
\end{proof}

\begin{lemma}
    Let $d_*\in(0,1),\varphi>1,\gamma>1,a_*>0$ be arbitrary constants.
    Assume $p/n\le \gamma$ and
    let $\bX$ have iid entries with distribution $N(\mathbf 0,\bSigma)$
    with $\Sigma_{jj}=1$ for all $j\in[p]$ and
    $\|\bSigma\|_{op}\|\bSigma^{-1}\|_{op}\le \varphi$.
    Then there exist constants $t_1,t_2,k_2>0$ depending on $(d_*,\gamma,\varphi)$
    only and constants
    $t_3,t_4,k_4>0$ depending on $(d_*,\gamma,\varphi,a_*)$ only such that
    as $n,p\to+\infty$ while $(d_*,\gamma,\varphi,a_*)$ remain fixed we have
    \begin{align}
    \label{RIP_lasso}
    \P(\forall \bh\in\R^p:
    \|\bSigma^{1/2}\bh\|=1,
    \|\bh\|_0\le d_*n 
    &\Rightarrow \|\tfrac{1}{\sqrt n}\bX\bh\|> t_1
    ) \to 1,
    \\
    \label{RE_lasso}
    \P(\forall \bh\in\R^p:
    \|\bSigma^{1/2}\bh\|=1,
    \|\bh\|_1\le 2 \sqrt{k_2n}\|\bh\|_2
    &\Rightarrow \|\tfrac{1}{\sqrt n}\bX\bh\| > t_2
    ) \to 1,
    \\\P(\forall \bh,\btheta\in\R^{p+n}:
    \|\bSigma^{1/2}\bh\|^2 + \|\btheta\|^2=1,
    \|\bh\|_0+\|\btheta\|_0\le d_*n 
    &\Rightarrow \|\tfrac{1}{\sqrt n}\bX\bh + a_*\btheta\| > t_3
    ) \to 1,
    \label{RIP_huber}
    \\\P(\forall \bh,\btheta\in\R^{p+n}:
    \|\bSigma^{1/2}\bh\|^2 + \|\btheta\|^2=1,
    \tfrac{ \|\bh\|_1+\|\btheta\|_1}{(\|\btheta\|^2+\|\bh\|^2)^{1/2}}\le \sqrt{k_4n} 
    &\Rightarrow \|\tfrac{1}{\sqrt n}\bX\bh + a_* \btheta\| > t_4
    ) \to 1.
    \label{RE_huber}
    \end{align}
\end{lemma}
In the following proof, for a random matrix $\bZ\in\R^{n\times p}$
and two subspaces $V_L\subset \R^n$ and $V_R\subset\R^p$
of dimension $d_L$ and $d_R$ respectively,
we call the restriction
of $\bZ$ to $V_L$ and $V_R$ the random matrix $\bG=\bQ_L^\top\bZ\bQ_R\in\R^{d_L,d_R}$
where $\bQ_L\in\R^{n,d_L},\bQ_R\in\R^{p,d_R}$
have orthonormal columns such that $\bQ_L\bQ_L^\top$ is the orthogonal projection
onto $V_L$ and $\bQ_R\bQ_R^\top$ is the orthogonal projection onto $V_R$.
If $\bZ$ has iid $N(0,1)$ entries then $\bG$ also has iid $N(0,1)$ entries
by rotational invariance.
\begin{proof}
    The proof of \eqref{RIP_lasso} is a minor variant of the union bound argument
    in \cite[Proposition 2.10]{blanchard2011compressed}. In short, thanks to
    the explicit formula for the smallest density of a Wishart matrix
    with identity covariance from \cite{edelman1988eigenvalues},
    the argument in \cite[Proof of Lemma 4.1]{chen2005condition} gives
    $\P(\phi_{\min}(\bG^\top\bG/n)
    \le t^2 )
    \le (\frac{etn}{n-d+1})^{n-d+1} / \sqrt{2\pi(n-d+1)}$
    if $\bG\in\R^{n\times d}$ has iid $N(0,1)$ entries.
    With $d=\lfloor d_* n\rfloor$,
    we apply this inequality to all $\binom{p}{d}$
    Gaussian matrices obtained as the restriction of
    $\bX\bSigma^{-1/2}$ to a $d$-dimensional subspace
    generated as the span of $d$ columns of
    $\bSigma^{1/2}$.
    Taking the union bound, the probability of the union
    is bounded from above by
    $\binom{p}{d}(\frac{etn}{n-d+1})^{n-p+1} / \sqrt{2\pi(n-d+1)}$
    which converges to 0 if $t$ is a small enough constant
    thanks to $\binom{p}{d}=\binom{p}{\lfloor d_*n\rfloor}\le e^{n d_*\log(e/d_*)}$.
    By a direct application of \cite[Lemma 2.7]{lecue2014sparse},
    \eqref{RE_lasso} is then obtained by choosing the constant $k_2=k_2(d_*,\gamma,\varphi)>0$
    small enough.

    Next we focus on \eqref{RIP_huber}. We do not attempt to optimize
    the constants.
    The event $\|\bX\bSigma^{-1/2}\|_{op}\le 2\sqrt n + \sqrt p$
    has probability approaching one
    \cite[Theorem II.13]{DavidsonS01}.
    In this event,
    simultaneously for all $(\bh,\btheta)$ such that
    $\tfrac{a_*}{2}\|\btheta\|\ge 
    \max\{\frac{a_*}{2}, (2+\sqrt \gamma) \}\|\bSigma^{1/2}\bh\|$, 
    by the triangle inequality
    \begin{align*}
    \|\tfrac{1}{\sqrt n}\bX\bh + a_*\btheta\|
    \ge 
    a_*\|\btheta\| - (2+\sqrt \gamma)\|\bSigma^{1/2}\bh\|
    &\ge 
    \tfrac{a_*}{2}\|\btheta\|\qquad\qquad \text{ thanks to }
    \tfrac{a_*}{2}\|\btheta\|\ge 
     (2+\sqrt \gamma) \|\bSigma^{1/2}\bh\|
    \\
    &\ge 
    \tfrac{a_*}{4} (\|\bSigma^{1/2}\bh\|+\|\btheta\|)
    \quad\text{ thanks to }\|\bSigma^{1/2}\bh\|^2\le \|\btheta\|^2
    .
    \end{align*}
    We now consider $(\bh,\btheta)$ such that the reverse inequality
    $\tfrac{a_*}{2}\|\btheta\| < \max\{\frac{a_*}{2}, (2+\sqrt \gamma) \}\|\bSigma^{1/2}\bh\|$ holds.
    Let $O\subset [n]$ and $S\subset[p]$ be such that $|O|+|S| = \lfloor d_*n \rfloor$
    and let $\bG\in \R^{(n-|O|)\times |S|}$ be the Gaussian matrix
    obtained by
    restriction of the Gaussian matrix $\bX\bSigma^{-1/2}$
    restricted on the left to the rows indexed in $[n]\setminus O$,
    and restricted on the right to the subspace
    given by the linear span of $\{\bSigma^{1/2}\be_j, j\in S\}$.
    For any $\btheta$ supported in $O$ and $\bh$ supported in $S$,
    since the orthogonal projection $\bP_O^\perp=\sum_{i\in[n]\setminus O}\be_i\be_i^\top$
    decreases the norm,
    $$
    \|\tfrac{1}{\sqrt n}\bX\bh + a_*\sqrt n\btheta\|
    \ge 
    \|\tfrac{1}{\sqrt n}\bP_O^\perp\bX\bh\|
    =\|\tfrac{1}{\sqrt n}\bG\bSigma^{1/2}\bh\|
    \ge 
    \phi_{\min}^{1/2}(\bG^\top\bG/n)
    \|\bSigma^{1/2}\bh\|
    .$$
    We again resort to the union bound argument in \cite[Proposition 2.10]{blanchard2011compressed}
    to control
    $\phi_{\min}^{1/2}(\bG^\top\bG/n)$.
    As in the proof of \eqref{RIP_lasso} for a Gaussian matrix
    with $n-|O|$ rows and $|S|$ columns we have
    \cite[Proof of Lemma 4.1]{chen2005condition}
    $$\P\Bigl(
    \phi_{\min}\Bigl(\frac{\bG^\top\bG}{n-|O|}\Bigr)
    \le t^2 
    \Bigr)
    \le \frac{(\frac{et(n-|O|)}{n-|O|-|S|+1})^{n-|O|-|S|+1} }{\sqrt{2\pi(n-|O|-|S|+1)}}
    \le \frac{ (\frac{et}{(1-d_*)+1})^{n(1-d_*)+1} }{\sqrt{2\pi(n(1-d_*)+1)}}
    $$
    thanks to $n - |O|-|S|\ge n(1-d*)$
    and the fact that $u\mapsto(\frac1u)^u$ is decreasing on $[1/e,+\infty)$.
    There are $\binom{n+p}{\lfloor d_*n\rfloor}$ possible pairs $(O,S)$
    with $|O|+|S|=\lfloor d_*n\rfloor$.
    Using $\binom{N}{d}\le \exp[d\log(eN/d)]$ for integers $d\le N$,
    a union bound leads to an extra multiplicative
    factor at most
    $\exp(n d_*\log(e\frac{1+\gamma}{d_*})$
    in the previous display.
    Choosing $t>0$ a small enough constant depending on
    $(\gamma,d_*)$ only, the probability of the union
    over all pairs $(S,O)$ with $|S|+|O| =\lfloor d_*n \rfloor$
    of such events over converge to 0. This completes the proof
    of \eqref{RIP_huber} to obtain $t_3>0$ depending only
    on $(d_*,\gamma,a_*)$.
    Finally, \eqref{RE_huber} is again obtained from
    \eqref{RIP_huber} and \cite[Lemma 2.7]{lecue2014sparse}
    by choosing the constant $k_4\in (0,1)$ small enough and depending
    only on $(t_3,\varphi,d_*,\gamma,a_*)$.
\end{proof}

\begin{proposition}
    \label{prop:Omega_L}
    Let $d_*\in(0,1),\varphi>1,\gamma>1$ be arbitrary constants.
    Assume $p/n\le \gamma$ and
    let $\bX$ have iid rows with distribution $N(\mathbf 0,\bSigma)$
    with $\Sigma_{jj}=1$ for all $j\in[p]$ and
    $\|\bSigma\|_{op}\|\bSigma^{-1}\|_{op}\le \varphi$.
    Assume that the noise $\bep$ has iid $N(0,1)$ entries.
    Then there exist constants $s_*,\lambda_*$ depending only
    on $(d_*,\varphi,\gamma)$
    such that
    if $\|\bbeta\|_0\le s_*n$ and $\hbbeta=\argmin_{\bb\in\R^p}
    \|\bX\bb-\by\|^2/(2n) + \lambda n^{-1/2}\|\bb\|_1$
    with $\lambda\ge \sigma \lambda_*$, there exists
    an open set $\Omega_L\subset \R^n\times \R^{n\times p}$ 
    such that
    $(\bep,\bX)\in\Omega_L\Rightarrow \|\hbbeta\|_0\le d_*n/2$
    and $\P((\bep,\bX)\in\Omega_L)\to 1$ as
    $n,p\to+\infty$ while $(d_*,\varphi,\gamma)$ remain fixed.
    Furthermore, 
    with $\bep,\tbep\in\R^n$ with $\bep=\tbep$,
    $\tbX,\bX\in\R^{n\times p}$,
    and $\hbbeta,\tbbeta,\bpsi,\tbpsi, \br, \tbr$
    the corresponding quantities as in \Cref{lemma:general-C},
    then 
    $\{(\bep,\bX),
    (\bep,\tbX)\}
    \subset\Omega_L$ implies
    \begin{equation}
    (\tfrac1n\|\bpsi - \tbpsi\|^2 +
    \|\bSigma^{\frac12}(\hbbeta-\tbbeta)\|^2)^{1/2}
    \le
    n^{-1/2}
    L_*
    \|(\bX-\tbX)\bSigma^{-\frac12}\|_{op}
    (\tfrac1n\|\bpsi\|^2 + \|\bSigma^{\frac12}(\hbbeta-\bbeta)\|^2)^{1/2}
    \label{eq:lipschitz-condition-lasso-new}
    \end{equation}
    for a constant $L_* = L_*(d_*,\gamma,\varphi)>0$ depending
    only on $(d_*,\gamma,\varphi)$.
\end{proposition}
\begin{proof}
    Let $t_1,t_2,k_2$ be the constants in \eqref{RE_lasso}
    and note that $t_1,t_2,k_2$ depend only on $(d_*,\gamma,\varphi)$.
    Define
    $s_*=s_*(d_*,\gamma,\varphi)$,
    $\lambda_*=\lambda_*(d_*,\gamma,\varphi)>0$
    and $\Omega_L\subset \R^n\times \R^{n\times p}$ by
    \begin{align}
    s_*
    \defas
    \min\Bigl\{
        k_2,
        \frac{d_* t_2^2}{
            8\varphi^2(2+\sqrt\gamma)^2
        }
    \Bigr\},
    \qquad
    \lambda_*^2
    \defas 
    \frac{\varphi(2+\sqrt\gamma)^2 4.04}{d_*},
    \label{s_*_lambda_*_Lasso}
    \\
    \nonumber
    \Omega_L=
    \{(\bep,\bX):
    \|\bep\| < \sigma\sqrt{1.01n};
    \bX\text{ satisfies the events in }\eqref{RIP_lasso},\eqref{RE_lasso};
\|\bX\bSigma^{-1/2}\|_{op} < \sqrt n (2+\sqrt \gamma)\}.
    \end{align}
    Thanks to \eqref{eq:upper-bound-sparsity-hbb} with $\bA=\bX$, $\bz=\bep$ and $p=\bar p$
    we have in $\Omega_L$
    $$
    \|\bbeta\|_0\le s_*n
    ~ \Rightarrow ~
    \|\bbeta\|_0\le k_2n
    ~ \Rightarrow ~
    \Bigl(
        \frac{1}{\kappa^2} \le \frac{\varphi}{t_2^2}
        \quad\text{ and }\quad
    \|\hbbeta\|_0\le \varphi n(2+\sqrt\gamma)^2
    \max\Bigl\{ \frac{2.02\sigma^2}{\lambda^2}, \frac{4 \|\bbeta\|_0\varphi}{nt_2^2}
    \Bigr\}
    \Bigr).
    $$
    where $\kappa^2$ is defined in
    \eqref{eq:upper-bound-sparsity-hbb}. Above, the bound
    $\frac{1}{\kappa^2}\le \frac{\varphi}{t_2^2}$ follows from
    the definition of $\kappa,t_2$ and
    $$
    \Bigl(\|\bbeta\|_1 - \|\hbbeta\|_1\Bigr)
    +
    \|\hbbeta-\bbeta\|_1
    \le 2 \sum_{j\in S}|\hbeta_j - \beta_j| \le 2 \|\bbeta\|_0^{1/2} \bigl(\sum_{j\in S}(\hbeta_j-\beta_j)^2\bigr)^{1/2}
    \le 2 \|\bbeta\|_0^{1/2}\|\hbbeta-\bbeta\|_2.
    $$
    By construction of $s_*,\lambda_*$ in \eqref{s_*_lambda_*_Lasso},
    $\|\bbeta\|_0\le s_*n$ and $\lambda \ge \sigma \lambda_*$
    implies that in the event $\Omega_L$,
    the upper bound on $\|\hbbeta\|_0$
    is smaller than $d_*n/2$ so that
    $\|\hbbeta\|_0\le d_* n/2$ holds.

    The fact that $\P((\bep,\bX\in\Omega_L))\to 1$ follows from
    a standard bound on the deviation of the  $\chi^2_n$ random variable
    $\|\bep\|^2/\sigma^2$, \eqref{RIP_lasso}-\eqref{RE_lasso}
    and \Cref{lemma:Davidson}.

    To prove \eqref{eq:lipschitz-condition-lasso-new},
    if $(\bep,\bX),(\bep,\tbX)\in \Omega_L$ then
    $\|\hbbeta-\tbbeta\|_0\le d_* n$
    so that $\|\bX(\hbbeta-\tbbeta)\|>\sqrt n t_1 \|\bSigma^{1/2}(\tbbeta-\hbbeta)\|$ by \eqref{RIP_lasso}.
    Applying the last part of \Cref{lemma:Xbar}
    to $\bA=\bX$,$\tbA=\tbX$, $\bz=\bep$, $\bSigmabar = n\bSigma$
    and $\tbb=\tbbeta$, $\hbb=\hbbeta$
    we obtain
    $\|\bA(\tbb-\hbb )\|
    \ge t_1 \sqrt n \|\bSigma^{1/2}(\tbb-\hbb)\|
    = t_1 \|\bSigmabar{}^{1/2}(\tbb-\hbb)\|$ to bound from below
    the minimum in the left-hand side of \eqref{eq:multiline-lasso-uberlasso-lipschitz} which gives \eqref{eq:lipschitz-condition-lasso-new}.
\end{proof}

\begin{proof}[Proof of \Cref{thm:main_robust} and \Cref{thm:main-squared-loss}
    for the Lasso, under \Cref{assum:g}(iii.a)]
Let $d_*\in (0,1)$ be any absolute constant in $(0,1)$, e.g., $d_*=0.99$.
We make explicit the change of variable to create a new isotopic
design matrix:
Let 
\begin{equation}
  \bG\defas \bX\bSigma^{-1/2}  ,
  \qquad
  \bw=\bSigma^{1/2}(\hbbeta-\bbeta)
  \label{defw}
\end{equation}
so that $\bG$ has iid $N(0,1)$ entries.
Let $L_*$ and $\Omega_L$ be given by \Cref{prop:Omega_L}.
For $U=\{\bG\in\R^{n\times p}: (\bep,\bG\bSigma^{-1/2})\in\Omega_L\}$
and $\bpsi,\brho:\R^{n\times p}\to \R^n,D:\R^{n\times p}\to \R$ the
functions defined by
\begin{equation}
    \label{eq_bpsi-D-brho-Lasso}
\bpsi(\bG) = \psi(\by-\bX\hbbeta) = \psi(\bep - \bG\bw),
\quad
D(\bG) = (\|\bpsi\|^2/n + \|\bw\|^2)^{1/2},
\quad
\brho(\bG) = n^{-1/2}\bpsi(\bG) / D(\bG)
\end{equation}
we have $\|\brho(\bG)-\brho(\bG')\|
\le 2L_* n^{-1/2}\|\bG-\bG'\|_F$ by
\eqref{eq:lipschitz-br-D-inverse} if $\{\bG,\bG'\}\subset U$.
Applying \Cref{kirsz_cor}
conditionally on $\bep$ to $\brho(\bG)$, to the random matrix $\bG$ and to $L=2L_*$, 
\eqref{Kirszbraun_first} gives that
\begin{equation}
\Rem_L \defas
\|\bG^\top\brho\|^2-p\|\brho\|^2
-\sum_{k=1}^p(\sum_{i=1}^n\frac{\partial \rho_i}{\partial g_{ij}})^2
- 2 \sum_{i=1}^n\sum_{k=1}^P \rho_i \be_k^\top\bG^\top \frac{\partial \brho}{\partial g_{ij}}
\label{Rem_L}
\end{equation}
has $\E|I\{\Omega\}\Rem_L|\le \C(\gamma,d_*,\varphi) \sqrt n$
where $\Omega$ is the event $\Omega=\{(\bep,\bG\bSigma^{1/2})\in\Omega_L\}$.
The derivatives of $\by-\bX\hbbeta$ with respect to $\bX$ and 
for a fixed $\bep$ are given by
\cite[Proposition 4.1]{bellec_zhang2019second_poincare}:
for almost every $(\bep,\bX)$, Frechet differentiability holds at $\bX$
and the derivatives are given (holding $\bep$ fixed) by
$$
\tfrac{\partial}{\partial x_{ij}}(\by-\bX\hbbeta)
= - \bX\hbA\be_j(y_i-\bx_i^\top\hbbeta) - \bV\be_i (\hbeta_j-\beta_j).
$$
where $\hat S = \{j\in[p]: \hbeta_j\ne 0\}$,
$\bX_{\hat S}\in\R^{n\times |\hat S|}$
is the submatrix of $\bX$ made of the columns of $\bX$ indexed in $\hat S$,
and
\begin{equation}
\hbA_{\hat S,\hat S} = (\bX_{\hat S}^\top\bX_{\hat S})^{-1}
\text{ and }
\hbA_{jk} = 0 \text{ if }j\notin \hat S \text{ or } k\notin \hat S,
\qquad
\bV = \bI_n - \bX\hbA\bX^\top
.
\end{equation}
Consequently, by the chain rule using $\bX=\bG\bSigma^{1/2}$, the derivatives of $\bpsi(\bG)$ are given by
\begin{equation}
    \tfrac{\partial}{\partial g_{ik}}\bpsi(\bG) 
    =
    - \bG \bA \be_k \psi_i - \bV \be_i w_k
    \qquad \text{ where } \qquad
    \bA = \bSigma^{1/2}\hbA \bSigma^{1/2}\in\R^{p\times p}
\end{equation}
and where $\bw=(w_k)_{k=1,...,p}$ is the vector in \eqref{defw}.
At this point, the argument and algebra are the same as those of
\eqref{eq:negligible-term-cross}
and \eqref{divergcence-square-to-bound}; using the same argument as
in the discussion surrounding \eqref{eq:negligible-term-cross}-\eqref{divergcence-square-to-bound} we find
with 
$\df=\trace[\bG\bA\bG^\top]=\trace[\bX\hbA\bX^\top]=|\hat S|$ that almost surely
\begin{align}
\Big|2 \sum_{i=1}^n\sum_{k=1}^P \rho_i \be_k^\top\bG^\top \frac{\partial \brho}{\partial g_{ij}}
+ 2 \df\|\brho\|^2
\Big|
&\le 2 \|\bG n^{-1/2}\|_{op} + 2L_* \|\bG n^{-1/2}\|_{op},
\nonumber
\\
\Big|
\sum_{k=1}^p(\sum_{i=1}^n\frac{\partial \rho_i}{\partial g_{ij}})^2
- \trace[\bV]^2 \frac{\|\bw\|^2}{nD^2}
\Big|
&\le
\bigl(\|\bA\|_{op}\|\bG\|_{op} + \frac{L_*}{\sqrt n}\bigr)^2
+ (\|\bA\|_{op}\|\bG\|_{op} + \frac{L_*}{\sqrt n}) 2 \sqrt n.
\label{eq:two-lines_lasso-bounds}
\end{align}
In $\Omega$, thanks to \eqref{RIP_lasso} and $\|\hbbeta\|_0\le d_*n/2$
to bound from above $\|\hbA\|_{op}$ and thanks to
$\|\bX\bSigma^{-1/2}\|_{op}\le\sqrt n (2+\sqrt \gamma)$
in the event $\Omega_L$ in \eqref{s_*_lambda_*_Lasso},
the right-hand sides of the two displayed equations above
are bounded from above by $\C(\gamma,\varphi,d_*)$.
Multiplying by $I\{\Omega\}$ and taking expectation gives
\begin{align}
\E[I\{\Omega\}
|\|\bG^\top\brho\|^2 - (p-2\df)\|\brho\|^2 - \trace[\bV]^2\|\bw\|^2/(nD^2)
|
&\le
\C(\gamma,\varphi,d_*) +
\E[I\{\Omega\}\Rem_L]
\nonumber
\\&\le \C(\gamma,\varphi,d_*) \sqrt n.
\label{eq:final_lasso}
\end{align}
Since $\|\hbbeta\|_0\le d_*n/2$ in $\Omega$, the proof of
\Cref{thm:main_robust} under \Cref{assum:g}(iii.a) for the Lasso
is complete.
The proof of \Cref{thm:main-squared-loss} follows a similar adaptation
of the proof given in \Cref{sec:square_loss}, using
\eqref{Kirszbraun_second} applied to $\brho(\bG),\bG$ and $L=2L_*$
on the one hand
and Proposition~\ref{prop:eta_1} on the other.
\end{proof}

\section{Huber Lasso: Lipschitz conditions}
\label{sec:appendix_huber_lasso_new}

This section provides the necessary lemmas to prove
the main result under \Cref{assum:g}(iii.a) and (iii.b).
Assumption (iii.b) corresponds to
the $\ell_1$ penalty combined with a scaled Huber loss
is used as the loss function:
for tuning parameters $\lambda_H,\lambda$,
\begin{equation}
    \label{eq:huber-lasso-appendix}
\hbbeta=\argmin_{\bb\in\R^p}
\Big(
   \frac 1 n \sum_{i=1}^n \lambda_H^2 \rho_H\Big( \lambda_H^{-1}(y_i-\bx_i^\top\bb)\Big)
   + n^{-1/2} \lambda  \|\bb\|_1
\Big)
\end{equation}
where $\rho_H$ is the Huber loss \eqref{eq:Huber-loss}.
We let $\hat O =\{i\in[n]: \psi'(y_i-\bx_i^\top\hbbeta)=0 \}$ be the set of outliers.

To control the sparsity and number of outliers of the $M$-estimator
with Huber loss and $\ell_1$ penalty \eqref{eq:huber-lasso-appendix},
the following equivalent definition of the estimator will be useful.
    The $M$-estimator $\hbbeta\in\R^p$ is equal to the first $p$ components
    of the solution $(\hbbeta,\hbtheta)\in\R^{p+n}$ of the optimization
    problem
    \begin{equation}
        \label{eq:Huber-lasso-n-p}
    (\hbbeta,\hbtheta) = \argmin_{(\bb,\btheta)\in\R^{p+n}}
    \|\bX\bb + \sqrt n \pb{(\lambda/\lambda_H)} \btheta - \by\|^2/(2n) + n^{-1/2}\lambda (\|\btheta\|_1 
    + \|\bbeta\|_1)
    .
    \end{equation}
    This representation of the Huber Lasso $\hbbeta$ is well known
    in the study of $M$-estimators based on the Huber loss,
    cf. \cite[Section 6]{donoho2016high}
    or \cite{dalalyan2019outlier} and the references therein.
    Since
    \eqref{eq:Huber-lasso-n-p} reduces to a Lasso
    optimization problem in $\R^{p+n}$ with design matrix 
    $[\bX | \sqrt n \pb{(\lambda/\lambda_H)} \bI_n] \in\R^{n\times (p+n)}$ and response $\by$, any Lasso solver can be used to compute the robust
    penalized estimator $\hbbeta$
    and we can use \Cref{lemma:Xbar} to control $\|\hbbeta\|_0+\|\hbbeta\|_0$.
    Note that $a_*=\frac{\lambda}{\lambda_H}$ under \Cref{assum:g}(iii.b)
    so that the design matrix is \eqref{eq:Huber-lasso-n-p}
    is $[\bX|\sqrt n a_* \bI_n]\in\R^{n\times (p+n)}$.

    \Cref{assum:g}(iii.b) requires that
    $(\eps_i)_{i\in[n]\setminus O}$
    are iid $N(0,\sigma^2)$.
    As in \cite{dalalyan2019outlier},
    rewrite $\by$ as
    \begin{equation}
        \label{LM-with-btheta-star-z}
        \by = \bX\bbeta + \sqrt n \pb{a_*}\btheta^* + \bz
        \text{ where }
        z_i=0\text{ for }i\in O\text{ and }
        (z_i)_{i\in [n]\setminus O}=
        (\eps_i)_{i\in [n]\setminus O}\sim^{iid}N(0,\sigma^2)
    \end{equation}
    where $\btheta^*$ is supported on $O\subset[n]$
    with $\|\btheta^*\|_0\le \lfloor s_* n \rfloor- \|\bbeta\|_0$.
    The non-zero components of the unknown vector $\btheta^*$
    represent the contaminated responses and $\btheta^*$ is not independent of $\bz$.
    The sparsity of the unknown
    regression vector in the above linear model with design matrix
    $\bXbar = [\bX|\sqrt n\pb{a_*}\bI_n]$ is
    $\|\bbeta\|_0 + \|\btheta^*\|_0 \le s_*n$,
    and the support of $\hbtheta$ is exactly
    the set of outliers $\hat O=\{i\in [n]: \psi'(y_i-\bx_i^\top\hbbeta)=0\}$.
    \Cref{lemma:Xbar} shows that
    $\|\hbbeta\|_0+\|\hbtheta\|_0$
    can be controlled with high
    probability when $s_*\in (0,1)$
    is a small enough constant and the tuning parameter is large enough,
    and \eqref{RIP_huber}-\eqref{RE_huber} are used to control
    the $\kappa$ constant in \eqref{eq:upper-bound-sparsity-hbb}.

\begin{proposition}
    \label{prop:Omega_H}
    Let $d_*\in(0,1),\varphi>1,\gamma>1,a_*>0$ be arbitrary constants.
    Assume $p/n\le \gamma$ and
    let $\bX$ have iid entries with distribution $N(\mathbf 0,\bSigma)$
    with $\Sigma_{jj}=1$ for all $j\in[p]$ and
    $\|\bSigma\|_{op}\|\bSigma^{-1}\|_{op}\le \varphi$.
    Assume that the noise $\bep$ has iid $N(0,1)$ entries.
    Then there exist constants $s_*,\lambda_*$ depending only
    on $(d_*,\varphi,\gamma,a_*)$
    such that
    if $\|\bbeta\|_0+\|\btheta^*\|_0\le s_*n$ and 
    $(\hbbeta,\hbtheta)$
    is the minimizer of \eqref{eq:Huber-lasso-n-p}
    with $\lambda/\lambda_H=a_*$
    and  $\lambda\ge \pb{\sigma}\lambda_*$, there exists
    an open set $\Omega_{H}\subset \R^n\times \R^{n\times p}$ 
    such that
    $(\bep,\bX)\in\Omega_{H}\Rightarrow \|\hbbeta\|_0+\|\hbtheta\|_0\le d_*n/2$
    and $\P((\bep,\bX)\in\Omega_{H})\to 1$ as
    $n,p\to+\infty$ while $(d_*,\varphi,\gamma,a_*)$ remain fixed.
    Furthermore, 
    with $\bep,\tbep\in\R^n$ with $\bep=\tbep$,
    $\tbX,\bX\in\R^{n\times p}$,
    and $\hbbeta,\tbbeta,\bpsi,\tbpsi, \br, \tbr$
    the corresponding quantities as in \Cref{lemma:general-C},
    then 
    $\{(\bep,\bX),
    (\bep,\tbX)\}
    \subset\Omega_{H}$ implies
    \eqref{eq:lipschitz-condition-lasso-new}
    for a constant $L_*=L_*(d_*,\gamma,\varphi,a_*)>0$ depending
    only on $(d_*,\gamma,\varphi,a_*)$.
\end{proposition}
\begin{proof}
    We need to specify constants $s_*\in(0,1)$ and $\lambda_*>0$. 
    Let $O\subset [n]$ be as in \eqref{LM-with-btheta-star-z}.
    Define
    $$
    \Omega_{H}=
    \{(\bep,\bX):
        \|\bep_{[n]\setminus O}\| < \sqrt{1.01}\sqrt n;
    \bX\text{ satisfies the events in }\eqref{RIP_huber},\eqref{RE_huber};
\|\bX\bSigma^{-1/2}\|_{op} < \sqrt n (2+\sqrt \gamma)\}.
    $$
    Let $t_3,t_4,k_4$ be the constants in \eqref{RIP_huber}-\eqref{RE_huber}. 
    Thanks to \eqref{eq:upper-bound-sparsity-hbb} with $\bA=[\bX\mid\sqrt n a_* \bI_n]\in\R^{n\times (p+n)}$, $\bz=\bep$ and $\bar p=p+n$
    we have in $\Omega_{H}$
    $$
    \|\bbeta\|_0 + \|\btheta^*\|_0\le k_4n
    \quad\Rightarrow\quad
    \|\hbbeta\|_0 + \|\hbtheta\|_0\le \varphi^2n(a_*+2+\sqrt\gamma)^2
    \max\{ \tfrac{2.02\sigma^2}{\lambda^2}, \tfrac{4(\|\bbeta\|_0+\|\btheta^*\|_0)\varphi}{nt_4^2}
    \}.
    $$
    As in the proof of Proposition~\ref{prop:Omega_L},
    we can thus choose $s_*=s_*(\varphi,t_2,\gamma,d_*,a_*)$ small enough
    and $\lambda_*=\lambda_*(\varphi,t_2,\gamma,d_*,a_*)$ large enough
    such that $\|\bbeta\|_0+\|\btheta^*\|_0\le s_*n$ and $\lambda \ge\sigma \lambda_*$
    implies that in the event $\Omega_{H}$,
    the right-hand side of the previous display
    is smaller than $d_*n/2$, i.e., we have
    $\|\hbbeta\|_0+\|\hbtheta\|_0\le d_* n/2$.
    If $(\bep,\bX),(\bep,\tbX)\in \Omega_{H}$ then
    $\|\hbbeta-\tbbeta\|_0 + \|\hbtheta-\tbtheta\|_0\le d_* n$. Applying \Cref{lemma:Xbar}
    to $\bA=[\bX\mid \sqrt n a_* \bI_n]$,$\tbA=[\tbX\mid \sqrt n a_* \bI_n]$, $\bz$ defined in \eqref{LM-with-btheta-star-z}, $\bSigmabar\in\R^{(p+n)\times (p+n)}$ diagonal by block
    with the two blocks $(\bSigma,\bI_n)$, 
    and $\tbb{}^\top=[\tbbeta{}^\top\mid \tbtheta{}^\top]$, $\hbb{}^\top=[\hbbeta{}^\top \mid\hbtheta{}^\top]$
    we obtain
    $\|\bA(\tbb-\hbb )\|
    \ge t_3 \sqrt n (\|\bSigma^{1/2}(\tbb-\hbb)\|^2 + \|\hbtheta-\tbtheta\|^2)^{1/2}
    = t_3 \|\bSigmabar{}^{1/2}(\tbb-\hbb)\|$ to bound from below
    the minimum in the left-hand side of \eqref{eq:multiline-lasso-uberlasso-lipschitz} which gives \eqref{eq:lipschitz-condition-lasso-new}.
\end{proof}

\begin{proof}[Proof of \Cref{thm:main_robust}
    for the Huber Lasso, under \Cref{assum:g}(iii.b)]
Let $d_*\in (0,1)$ be any absolute constant in $(0,1)$, e.g., $d_*=0.99$.
Define $\bG\in\R^{n\times p}$ and $\bw$ by
\eqref{defw}.
Let $L_*$ and $\Omega_H$ be given by \Cref{prop:Omega_H}.
For $U=\{\bG\in\R^{n\times p}: (\bep,\bG\bSigma^{-1/2})\in\Omega_H\}$
and $\bpsi,\brho:\R^{n\times p}\to \R^n,D:\R^{n\times p}\to \R$ the
functions defined by \eqref{eq_bpsi-D-brho-Lasso},
we have $\|\brho(\bG)-\brho(\bG')\|
\le 2L_* n^{-1/2}\|\bG-\bG'\|_F$ by
\eqref{eq:lipschitz-br-D-inverse} if $\{\bG,\bG'\}\subset U$.
Applying \Cref{kirsz_cor}
conditionally on $\bep$ to $\brho(\bG)$, to the random matrix $\bG$ and to $L=2L_*$, 
\eqref{Kirszbraun_first} gives that \eqref{Rem_L}
has $\E|I\{\Omega\}\Rem_L|\le \C(\gamma,d_*,\varphi) \sqrt n$
where $\Omega$ is the event $\Omega=\{(\bep,\bG\bSigma^{1/2})\in\Omega_H\}$.

Using the argument discussed after
\eqref{eq:matrix-intertible-probability-one} that the KKT conditions
of the Huber Lasso hold strictly and some algebra (we omit the details),
the derivatives of $\psi(\by-\bX\hbbeta)$ with respect to $\bX$ and 
for a fixed $\bep$ are given by
$$
\tfrac{\partial}{\partial x_{ij}}\psi(\by-\bX\hbbeta)
= - \bD\bX\hbA\be_j\psi(y_i-\bx_i^\top\hbbeta) - \bV\be_i (\hbeta_j-\beta_j).
$$
where $\bD=\diag(\bpsi')\in\R^{n\times n}$,
$\hat S = \{j\in[p]: \hbeta_j\ne 0\}$,
$\bX_{\hat S}\in\R^{n\times |\hat S|}$
is the submatrix of $\bX$ made of the columns of $\bX$ indexed in $\hat S$,
and
\begin{equation}
\hbA_{\hat S,\hat S} = (\bX_{\hat S}^\top \bD \bX_{\hat S})^{-1}
\text{ and }
\hbA_{jk} = 0 \text{ if }j\notin \hat S \text{ or } k\notin \hat S,
\qquad
\bV = \bD - \bD\bX\hbA\bX^\top\bD
.
\end{equation}
Consequently, by the chain rule using $\bX=\bG\bSigma^{1/2}$, the derivatives of $\bpsi(\bG)$ in \eqref{eq_bpsi-D-brho-Lasso}
are given almost surely by
\begin{equation}
    \tfrac{\partial}{\partial g_{ik}}\bpsi(\bG) 
    =
    - \bD\bG \bA \be_k \psi_i - \bV \be_i w_k
    \qquad \text{ where } \qquad
    \bA = \bSigma^{1/2}\hbA \bSigma^{1/2}\in\R^{p\times p}
\end{equation}
and where $\bw=(w_k)_{k=1,...,p}$ is the vector in \eqref{defw}.
At this point, the argument and algebra are the same as those of
\eqref{eq:negligible-term-cross}
and \eqref{divergcence-square-to-bound}; using the same argument as
in the discussion surrounding \eqref{eq:negligible-term-cross}-\eqref{divergcence-square-to-bound} we find
with 
$\df=\trace[\bG\bA\bG^\top\bD]=\trace[\bX\hbA\bX^\top\bD]=|\hat S|$
(see \Cref{prop:Huber-Lasso}) that almost surely
\eqref{eq:two-lines_lasso-bounds} hold.
In $\Omega$, thanks to \eqref{RIP_huber} and $\|\hbbeta\|_0+\|\hbtheta\|_0\le d_*n/2$
to bound from above $\|\hbA\|_{op}$ and thanks to
$\|\bX\bSigma^{-1/2}\|_{op}\le\sqrt n (2+\sqrt \gamma)$
in the event $\Omega_H$,
the right-hand sides of \eqref{eq:two-lines_lasso-bounds}
are bounded from above by $\C(\gamma,\varphi,d_*)$.
Multiplying by $I\{\Omega\}$ and taking expectation gives
again \eqref{eq:final_lasso} and the proof
of \Cref{thm:main_robust} under \Cref{assum:g}(iii.b) is complete.
\end{proof}

\newpage
\onArxivOrInJournal{}{
\begin{table}[ht]
\small
\quad\qquad\qquad\qquad
\begin{tabular}{@{}|l|c|c|@{}}
\toprule
$u\in$& $[0,1]$ & $[1,\infty)$  \\ \midrule
$\psi_H'(u)$ & $1$  & $0$  \\ \midrule
$\psi_H(u)$ & $u$  & $1$ \\ \midrule
$\rho_H(u)$  & $\frac{u^2}{2}$ & $u-\frac{1}{2}$  \\ \bottomrule
\end{tabular}
\qquad
\begin{tabular}{@{}|l|c|c|c|@{}}
\toprule
$u\in$ & $[0,1]$ & $[1,2]$ & $[2,+\infty)$ \\ \midrule
$\psi_0'(u)$ & $1$  & $2-u$  & $0$ \\ \midrule
$\psi_0(u)$ & $u$  & $ -\frac{1}{2}  + 2u - \frac{u^2}{2}$ & $\frac{3}{2}$  \\ \midrule
$\rho_0(u)$  & $\frac{u^2}{2}$ & $\frac{1}{6} -\frac{u}{2} + u^2 - \frac{u^3}{6}$  &  $\frac{-7}{6} + \frac{3u}{2}$  \\ \bottomrule
\end{tabular}
\newline
\vspace*{0.2cm}
\newline
\begin{tikzpicture}
    \begin{axis}[
    height=1.6in,
    width=2.2in,
        ]
\addplot+[no marks,domain=0:3,samples=301] {  ( x<1 ? 1 : 0 ) };
\addplot+[no marks,domain=0:3,samples=301] {  ( x<1 ? x : 1 ) };
\addplot+[no marks,domain=0:3,samples=301] {  ( x<1 ? x^2/2 : x-1/2 ) };
\end{axis}
\end{tikzpicture}
$~~~~~~~~~~~~~$
\begin{tikzpicture}
    \begin{axis}[
    height=1.6in,
    width=2.2in,
        ]
\addplot+[no marks,domain=0:3,samples=301] {  ( x<1 ? 1 : ( x<2 ? 2-x  : 0) ) };
\addplot+[no marks,domain=0:3,samples=301] {  ( x<1 ? x : ( x<2 ? -1/2 + 2*x -x^2/2 : 3/2) ) };
\addplot+[no marks,domain=0:3,samples=301] {  ( x<1 ? x^2/2 : ( x<2 ? 1/6 - x/2 +x^2 - x^3/6  : -7/6 + 3*x/2) ) };
\end{axis}
\end{tikzpicture}

\caption[Huber loss and its derivatives, as well as its smoothed version $\rho_0$]{Huber loss $\rho_H(u)$ and its derivatives, as well as its smoothed version
    $\rho_0(u)$ and its derivatives.
    In the plots, the loss $\rho$
    is shown in brown,
    $\psi=\rho'$ in red
    and $\psi'$ in blue.
}
\label{table:rho_0}
\end{table}
\begin{table}[ht]
    \centering
\small
\begin{tabular}{@{}|l|c|c|c|@{}}
\toprule
$u\in$ & $[0,1]$ & $[1,2]$ & $[2,+\infty)$ \\ \midrule
$\psi_1'(u)$ & $1$  & $2x^3 - 9x^2 + 12x - 4$  & $0$ \\ \midrule
$\psi_1(u)$ & $u$  & $\frac 3 2  + \frac{(x-2)^3x}{2}$ & $\frac{3}{2}$  \\ \midrule
$\rho_1(u)$  & $\frac{u^2}{2}$ & $\frac{x^5}{10} -\frac{3x^4}{4} + 2x^3-2x^2 + \frac{3x}{2}-\frac{7}{20}$  &  $\frac{37}{20} + \frac{3(u-2)}{2}$  \\ \bottomrule
\end{tabular}
\begin{tikzpicture}
    \begin{axis}[
    height=1.5in,
    width=2.6in,
        ]
        \addplot+[no marks,domain=0:3,samples=301] {  ( x<1 ? 1 : ( x<2 ? 2*x^3 - 9*x^2 + 12*x - 4  : 0) ) };
\addplot+[no marks,domain=0:3,samples=301] {  ( x<1 ? x : ( x<2 ? 3/2 + (x-2)^3*x/2 : 3/2) ) };
\addplot+[no marks,domain=0:3,samples=301] {  ( x<1 ? x^2/2 : ( x<2 ? x^5/10 - 3*x^4/4 + 2*x^3 - 2*x^2 + 3*x/2 - 7/20  : 37/20 + 3*(x-2)/2) ) };
\end{axis}
\end{tikzpicture}

\caption[Smoothed version $\rho_1$ of the Huber loss]{Smooth robust loss $\rho_1(u)$ and its derivatives for $u\ge 0$.
}
\label{table:rho_1}
\end{table}
}




\textbf{Acknowledgments.}
Research partially supported by the NSF Grants DMS-1811976 and 
DMS-1945428.

\bibliographystyle{plainnat}
\bibliography{../../bibliography/db}

\end{document}